\newtheorem{Thm}{Theorem}[subsection]
\newtheorem{Conj}[Thm]{Conjecture}
\newtheorem{Prop}[Thm]{Proposition}
\newtheorem{Def/Thm}[Thm]{Definition/Theorem}
\newtheorem{Cor}[Thm]{Corollary}
\newtheorem{Lemma}[Thm]{Lemma}
\theoremstyle{remark}
\newtheorem{Rmk}[Thm]{Remark}
\newtheorem{EG}[Thm]{Example}
\numberwithin{equation}{subsection}
\newcommand{\ot }{\otimes}
\newcommand{\ra }{\rightarrow}
\newcommand{\lra }{\longrightarrow}
\newcommand{\Hom }{{\mathrm{Hom}}}
\newcommand{\Spec}{{\mathrm{Spec}}}
\newcommand{\Pic}{{\mathrm{Pic}}}
\newcommand{\cO}{{\mathcal{O}}}
\newcommand{\cL}{{\mathcal{L}}}
\newcommand{\cF}{{\mathcal{F}}}
\newcommand{\cP}{{\mathcal{P}}}
\newcommand{\G}{{\bf G}}
\newcommand{\bT}{{\bf T}}
\newcommand{\bS}{{\bf S}}
\newcommand{\NN}{{\mathbb N}}
\newcommand{\PP }{{\mathbb P}}
\newcommand{\QQ }{{\mathbb Q}}
\newcommand{\CC }{{\mathbb C}}
\newcommand{\ZZ }{{\mathbb Z}}
\newcommand{\RR }{{\mathbb R}}
\newcommand{\one }{{\mathbbm 1}}
\newcommand{\ke }{{\varepsilon }}
\newcommand{\Mgk}{\overline{M}_{g,k}}
\newcommand{\fMgk}{\mathfrak{M}_{g,k}}
\newcommand{\QGraphe}{QG^\ke_{0,k,\beta}(\WmodG)}
\newcommand{\WmodG}{W/\!\!/\G}
\newcommand{\QmapX}{\mathrm{Q}_{g,k}(X,\beta)}
\newcommand{\QmapXe}{\mathrm{Q}_{g,k}^\ke(X,\beta)}
\newcommand{\QmapWe}{\mathrm{Q}_{g,k}^\ke(\WmodG,\beta)}
\newcommand{\Qgke}{\mathrm{Q}_{g,k}^\ke(\WmodG,\beta)}
\newcommand{\MgkW}{\overline{M}_{g,k}(\WmodG,\beta)}
\newcommand{\T}{{\bf {T}}}
\newcommand{\fM}{\mathfrak{M}}
\newcommand{\re}{\mathrm{e}}
\newcommand{\oM}{\overline{M}}
\newcommand{\lan}{\langle}
\newcommand{\ran}{\rangle}
\newcommand{\lla}{\langle\!\langle}
\newcommand{\rra}{\rangle\!\rangle}
\newcommand{\Atwo}{{\widetilde{A_2}}}
\begin{document}
\title[Higher genus quasimap wall-crossing]{Higher genus quasimap wall-crossing for semi-positive targets}

\begin{abstract} In previous work we have conjectured wall-crossing formulas for genus zero quasimap invariants of GIT quotients and proved them 
via localization in many cases. 
We extend these formulas to higher genus when the target is semi-positive, and prove them for semi-positive toric varieties, in particular for toric  local Calabi-Yau targets.  
The proof also applies to local Calabi-Yau's associated to some non-abelian quotients.
\end{abstract}

\author{Ionu\c t Ciocan-Fontanine}
\noindent\address{School of Mathematics, University of Minnesota, 206 Church St. SE,
Minneapolis MN, 55455, and\hfill
\newline \indent School of Mathematics, Korea Institute for Advanced Study,
85 Hoegiro, Dongdaemun-gu, Seoul, 130-722, Korea}
\email{ciocan@math.umn.edu}

\author{Bumsig Kim}
\address{School of Mathematics, Korea Institute for Advanced Study,
85 Hoegiro, Dongdaemun-gu, Seoul, 130-722, Korea}
\email{bumsig@kias.re.kr}
\maketitle

\section{Introduction} 
\subsection{Overview}
When a complex affine algebraic variety $W$ is acted upon by a reductive group $\G$, a choice of a 
character $\theta$ of $\G$ determines a linearization of the action,
and hence a GIT quotient $\WmodG=W/\!\!/_\theta \G$.

Under reasonable conditions on the triple $(W,\G,\theta)$, 
certain stability conditions, depending on a parameter $\ke\in \QQ_{>0}\cup \{0+,\infty\}$ produce (relatively) proper Deligne-Mumford moduli stacks of $\ke$-{\it stable quasimaps} from pointed
curves of genus $g$ to $\WmodG$,
carrying virtual fundamental classes. They come equipped with evaluation maps and with tautological cotangent $\psi$-classes at the markings and therefore determine (for projective $\WmodG$)
descendant $\ke$-quasimap invariants

\begin{equation}\label{invariants}
\langle \delta_1\psi_1^{a_1},\dots,\delta_k\psi_k^{a_k}
\rangle^\ke_{g,k,\beta}.
\end{equation}
As usual, if the target is only quasi-projective, but has a torus action with good properties, \eqref{invariants} are well-defined as equivariant invariants.
When $(g,k)\neq (0,0)$ and $\ke\in (1,\infty]$, \eqref{invariants} are the Gromov-Witten invariants of $\WmodG$. For $(g,k)=(0,0)$, the same holds when $\ke\in (2,\infty]$.

As $\ke$ varies, we expect the invariants to be related via wall-crossing formulas. For the genus zero sector, with arbitrary number of primary insertions and one descendant insertion,
such formulas are obtained in \cite{CK0}, where we also show that they may be interpreted as a vast generalization of Givental's toric mirror theorems. The genus zero wall-crossing
formulas are conjectured to hold for general $\WmodG$ and are proved in \cite{CK0} for many GIT targets by localization methods.
The present paper begins the exploration of wall-crossing at higher genus.
\subsection{Conjectures} Let
$${\bf t}={\bf t}(\psi)=t_0+t_1\psi+t_2\psi^2+t_3\psi^3+\dots,$$
where $t_j\in H^*(\WmodG,\QQ)$ are general even cohomology classes. Let the {Novikov ring} $\Lambda=\QQ[[q]]$
be the $q$-adic completion of the semigroup ring on the semigroup ${\mathrm {Eff}}(W,\G,\theta)$ of $\theta$-{\it effective} classes (see \S\ref{e-qmaps} for the definition).
The genus $g$ descendant potential of $\WmodG$ is defined by
$$F^\ke_g({\bf t}):=\sum_{\beta \in {\mathrm {Eff}}(W,\G,\theta)}\sum_{m\geq 0} \frac{q^\beta}{m!}\lan {\bf t}(\psi_1),{\bf t}(\psi_2),\dots, {\bf t}(\psi_m) \ran^\ke_{g,m,\beta}.$$
As usual, the unstable terms in the sum corresponding to $(g,m,\beta)$ for which the moduli spaces do not exist are omitted.

The wall-crossing formula relates the Gromov-Witten potential $F_g^\infty$ to the potential $F^\ke_g$ 
for {\it semi-positive} triples $(W ,\G, \theta)$ (for these triples, the
corresponding quotients $\WmodG$ have nef anti-canonical class). To state it, recall first from \cite{CK0} that in genus zero quasimap theory
there is a $J$-function $J^\ke(q,t,z)$ for each stability parameter $\ke\geq 0+$.
It depends on the Novikov variables, a general even cohomology element $t\in H^*(\WmodG,\QQ)$, and a formal variable $z$.
For $\ke >1$, it is the usual Givental (big) $J$-function of Gromov-Witten theory. The {\it small} $J^\ke$-function is defined as the restriction at $t=0$:
$$ J^\ke_{sm}(q,z):=J^\ke(q,0,z).$$
In the semi-positive case, $J^\ke$ takes values in $H^*(\WmodG,\Lambda)[[1/z]]$ and we will
need the first two terms in the $1/z$-expansion of small $J^\ke$,
$$ J^\ke_{sm}(q,z)=J^\ke_0(q)\one + J^\ke_1(q)\frac{1}{z}+O\left(\frac{1}{z^2}\right),$$
where $\one$ is the unit in cohomology. 
For the asymptotic stability $\ke=0+$ we use the special notation
 $I_{sm}=J^{0+}_{sm}$, and call this the small $I$-function of $\WmodG$.
The series $I_0(q)\in\Lambda$ is invertible, of the form $1+O(q)$, while the series $I_1(q)$ is in $H^{\leq 2}(\WmodG,\Lambda)$, with vanishing constant term in $q$.
For $\ke >0$, the coefficients $J_0^\ke$ and $J_1^\ke$ are polynomial $q$-truncations of $I_0$ and $I_1$. 

\begin{Conj}\label{main conj intro} If $\WmodG$ is a GIT quotient corresponding to a semi-positive triple $(W,\G,\theta)$,
then for every $\ke \geq 0+$ we have
\begin{equation}\label{GW to epsilon}
(J_0^\ke)^{2g-2}F_g^\ke({\bf t}(\psi))=  F^\infty_g\left(\frac{{\bf t}(\psi)+J_1^\ke}{J^\ke_0}\right).
\end{equation}
Further, for every $\ke_1\neq \ke_2$
\begin{equation}\label{all epsilon}
(J_0^{\ke_1})^{2g-2} F_g^{\ke_1}(J_0^{\ke_1}{\bf t}(\psi)-J_1^{\ke_1})= (J_0^{\ke_2})^{2g-2} F^{\ke_2}_g\left(J^{\ke_2}_0{\bf t}(\psi)-J_1^{\ke_2}\right).
\end{equation}
\end{Conj}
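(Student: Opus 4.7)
The plan is to reduce \eqref{all epsilon} to \eqref{GW to epsilon} and focus on the latter. Substituting ${\bf t}(\psi)\mapsto J_0^{\ke_i}{\bf t}(\psi)-J_1^{\ke_i}$ into \eqref{GW to epsilon} at $\ke=\ke_i$ collapses the right-hand side to $F_g^\infty({\bf t}(\psi))$, independent of $i$, so performing this at $\ke_1$ and $\ke_2$ and equating yields \eqref{all epsilon}. The real content therefore lies in \eqref{GW to epsilon}.

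To prove \eqref{GW to epsilon}, I would extend the virtual $\CC^*$-equivariant localization technique used in \cite{CK0} from the genus zero graph space $\QGraphe$ to the higher genus analogue $\QGge$, the moduli of $\ke$-stable quasimaps of genus $g$ and class $\beta$ whose domain has a distinguished parametrized $\PP^1$-component, with $\CC^*$ acting by scaling this $\PP^1$. The novelty compared with \cite{CK0} is that the non-parametrized part of the domain is now allowed to have positive genus $g$. As usual, the $\CC^*$-fixed locus admits a vertex--edge decomposition: over each of $0,\infty\in\PP^1$ one attaches either a nodal ``main'' component carrying the positive genus and the residual moduli, or simply a smooth point, and the remainder of the $\PP^1$ is covered by chains of rational tails consisting of base points of the quasimap.

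The two sides of \eqref{GW to epsilon} should then emerge by comparing the residue contributions of the fixed loci with the main component at $0$ versus at $\infty$. In each case, the rational-tail factor is the unpointed genus zero graph-space data, whose generating series across all $\beta$ is by \cite{CK0} precisely the small quasimap $J$-function $J^\ke_{sm}(q,z)$. Semi-positivity of $(W,\G,\theta)$ forces $J^\ke_{sm}$ to lie in $H^*(\WmodG,\Lambda)[[1/z]]$, and once the tail generating function is paired against the $\psi$-class at the smoothing node and integrated against the virtual class of the genus-$g$ vertex, a virtual dimension count eliminates every $1/z$-coefficient beyond $J_0^\ke$ and $J_1^\ke$. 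The $(J_0^\ke)^{2g-2}$ factor and the affine reparameterization ${\bf t}\mapsto ({\bf t}+J_1^\ke)/J_0^\ke$ then fall out of standard string/dilaton bookkeeping applied to the remaining descendant integrals on the main genus-$g$ component.

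The principal obstacle is making this localization rigorous in higher genus. Two compounding issues arise. First, higher genus quasimap moduli are not combinatorially rigid, so a purely toric fixed-point enumeration on $\QGge$ is unavailable; one must instead leverage a torus action on $\WmodG$ itself to reduce the virtual integrals to fixed-point contributions with controlled structure, which is exactly why the theorem is proved only for semi-positive toric $\WmodG$ and for the non-abelian local Calabi-Yau examples inheriting such an ambient torus action. Second, without semi-positivity the higher $1/z$-coefficients of $J^\ke_{sm}$ would survive the pairing and feed into the integrals, so the closed form \eqref{GW to epsilon} would need explicit correction terms; semi-positivity is precisely the virtual dimension condition that truncates $J^\ke_{sm}$ to its two leading terms for the purposes of this wall-crossing comparison.
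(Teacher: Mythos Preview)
Your first paragraph is correct and matches the paper exactly: immediately after stating the conjecture the authors observe that the transformation ${\bf t}(\psi)\mapsto ({\bf t}(\psi)+J_1^\ke)/J_0^\ke$ is invertible, so \eqref{all epsilon} follows from \eqref{GW to epsilon}. Note, however, that the statement is a \emph{conjecture}: the paper does not prove \eqref{GW to epsilon} in general, only the special cases recorded in Theorems \ref{genus zero intro}, \ref{Main Thm}, and \ref{local Grass}.

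Your proposed argument for \eqref{GW to epsilon} has a genuine gap. You suggest that $\CC^*$-localization on the higher-genus graph space $\QGge$, comparing residues at $0$ versus $\infty$, will produce the two sides of \eqref{GW to epsilon}. But both poles of $\QGge$ parametrize $\ke$-stable objects; there is no mechanism by which one pole sees the $\infty$-theory while the other sees the $\ke$-theory. $\CC^*$-localization on a single $\ke$-graph space yields relations \emph{among} $\ke$-invariants (this is how the factorization of $S_t^\ke$ and the identity for $V_t^\ke$ are obtained in genus zero), not a wall-crossing between distinct stability parameters. Your sketch does not explain where the $\infty$-theory enters.

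The paper's actual route, for the toric targets of Theorem \ref{Main Thm}, is entirely different and does not use higher-genus graph spaces. It upgrades \eqref{GW to epsilon} to a cycle-level statement (Theorem \ref{ClassThm}): one pushes both the $\ke$-quasimap virtual class and a $J_1^\ke$-\emph{shifted} stable-map virtual class forward to a common quasimap space for $\PP^N$, then applies $\T$-localization for the big torus acting on the \emph{target} and matches residues fixed-point by fixed-point. The vertex contributions on each side are polynomials in cotangent and diagonal classes on Hassett spaces $\overline{M}_{g,k|d}$ which are independent of $g$ and $k$; a lemma of Marian--Oprea--Pandharipande (Lemma \ref{MOPlemma}) then reduces their equality to a family of genus-zero integrals. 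Only at this final genus-zero step do graph spaces and $\CC^*$-localization enter, via a localized version of Givental's uniqueness argument (Lemma \ref{uniqueness lemma}).
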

Since the transformation ${\bf t}(\psi)\mapsto ({\bf t}(\psi)+J_1^\ke)/J_0^\ke$ is invertible, the (apriori stronger) wall-crossing formula \eqref{all epsilon} follows from \eqref{GW to epsilon}.

Let $\{\gamma_1,\dots ,\gamma_s\}$ be a homogeneous basis of $H^{2*}(\WmodG,\QQ)$ and write $t_j=\sum_i t_{ji}\gamma_i$, so that $F^\ke_g$ are formal series in the $t_{ji}$ variables.
To have the uniform statement in the conjecture for all stability parameters, some modifications to the potentials are needed in low genus, as follows:
\begin{itemize}
\item
 When $g=0$ we must discard from both sides of equations \eqref{GW to epsilon} and \eqref{all epsilon} the parts of degree $\leq 1$ in the $t_{ji}$'s. There is, however, a natural 
way to include all terms of $t_{ji}$-degree $1$ in the genus zero potentials, so that \eqref{GW to epsilon} becomes an equality up to constants, see Remark \ref{degree one}. 
\item When $g=1$ a correction term is needed, to account for the failure of the dilaton equation on $\overline{M}_{1,1}$. Namely, we redefine 
\begin{equation}F^\ke_1({\bf t}):= \frac{1}{24}\chi_{\mathrm{top}}(\WmodG) \log J_0^\ke+\sum_{(\beta, m)\neq (0,0)} \frac{q^\beta}{m!}\lan {\bf t}(\psi_1),\dots, {\bf t}(\psi_m) \ran^\ke_{1,m,\beta},
\end{equation}
where $\chi_{\mathrm{top}}$ denotes the topological Euler characteristic.

\end{itemize}

 By matching Taylor coefficients in the $t_{ij}$'s
in Conjecture \ref{main conj intro} we get the equivalent formulation:
\begin{Conj}\label{equivalent} Let $\WmodG$ be a GIT quotient corresponding to a semi-positive $(W,\G,\theta)$. Fix $(g, n)\neq (1,0)$, with $2g-2+n\geq 0$, and a stability parameter $\ke\geq 0+$.
Then for arbitrary integers $a_1,\dots a_n\geq 0$ and arbitrary even cohomology classes 
$\delta_1,\dots \delta_n \in H^*(\WmodG,\QQ)$ 
\begin{align}\label{general}
&(J_0^\ke(q))^{2g-2+n}\sum_\beta q^\beta \langle \delta_1\psi_1^{a_1},\dots,\delta_n\psi_n^{a_n}
\rangle^\ke_{g,n,\beta}=\\
&\nonumber \sum_\beta q^\beta\sum_{m=0}^\infty \frac{1}{m!}\left\lan \delta_1\psi_1^{a_1},\dots,\delta_n\psi_n^{a_n}, \frac{J^\ke_1(q)}{J^\ke_0(q)},\dots, \frac{J^\ke_1(q)}{J^\ke_0(q)}
\right\ran_{g,n+m,\beta}^\infty.
\end{align}
If $(g,n)=(1,0)$ and $\ke$ is arbitrary,

\begin{equation}\label{unpointed genus 1}
\frac{1}{24}\chi_{\mathrm{top}}(\WmodG) \log J_0^\ke+\sum_{\beta\neq 0}\lan\;\ran^\ke_{1,0,\beta}=
\sum_{(\beta, m)\neq (0,0)} \frac{q^\beta}{m!}\left\lan \frac{J^\ke_1(q)}{J^\ke_0(q)},\dots, \frac{J^\ke_1(q)}{J^\ke_0(q)} \right\ran^\infty_{1,m,\beta}.
\end{equation}

\end{Conj}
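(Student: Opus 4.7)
The plan is to reduce the higher-genus identity \eqref{general} to the already-established genus zero wall-crossing of \cite{CK0} by a $\CC^*$-equivariant virtual localization argument on an auxiliary graph quasimap space. The central object is $QG^\ke_{g,n,\beta}(\WmodG)$, the moduli of $\ke$-stable quasimaps from a genus $g$ nodal curve with $n$ markings to $\WmodG$ whose domain carries a distinguished parametrized $\PP^1$-component. The scaling action of $\CC^*$ on $\PP^1$, with fixed points $0$ and $\infty$, lifts to $QG^\ke_{g,n,\beta}$ together with its perfect obstruction theory; denote the equivariant parameter by $z$.

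The key formula arises from applying the virtual $\CC^*$-localization formula to the class $\prod_{i=1}^n \mathrm{ev}_i^*(\delta_i)\psi_i^{a_i}$, with the $n$ markings constrained to lie over $\infty$. A $\CC^*$-fixed locus is described by (a) an $\ke$-stable quasimap ``tail'' at $\infty$ carrying the genus and all markings; (b) the parametrized $\PP^1$ itself, viewed as a $\PP^1$-quasimap to $\WmodG$ with base-point singularities allowed only over $0$ and $\infty$; and (c) possibly an additional $\ke$-stable tail at $0$. The tail at $\infty$ produces the correlator on the left-hand side of \eqref{general}, multiplied by a factor involving the smoothing $\psi$-class at the node and the equivariant parameter $z$. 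Summing the contributions of the $\PP^1$-component and the tail at $0$ over all $\theta$-effective degrees assembles exactly into the small $J^\ke$-function evaluated at $z$, paired with an appropriate insertion at the node.

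Semi-positivity enters crucially at this step: the virtual dimension bound forces the resulting $z$-expansion to truncate after the $1/z$-coefficient, so that only the two terms $J_0^\ke$ and $J_1^\ke$ of the small $J^\ke$-function survive. Running the same localization on the $\ke=\infty$ graph space and comparing the two identities, one finds that the $\ke$-correlator on the left of \eqref{general}, weighted by $(J_0^\ke)^{2g-2+n}$, equals the sum over $m\geq 0$ of Gromov-Witten correlators with the original insertions augmented by $m$ copies of $J_1^\ke/J_0^\ke$, together with the symmetry factor $1/m!$. This is precisely \eqref{general}, the prefactor $(J_0^\ke)^{2g-2+n}$ being accounted for by unit-insertion and dilaton manipulations on the $\infty$-side. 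The exceptional case $(g,n)=(1,0)$ of \eqref{unpointed genus 1} I would derive from the $(g,n)=(1,1)$ case applied to the dilaton class $\psi_1$, where the failure of the dilaton equation on $\oM_{1,1}$ by $\chi_{\mathrm{top}}(\WmodG)/24$ produces the correction $\tfrac{1}{24}\chi_{\mathrm{top}}(\WmodG)\log J_0^\ke$.

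The principal obstacle, compared with the genus zero treatment of \cite{CK0}, is the precise fixed-locus analysis: the main genus $g$ component and the parametrized $\PP^1$ interact through a smoothing node whose $\psi$-class can a priori contribute unbounded powers to the localization residue. The semi-positivity assumption provides exactly the degree bound $-K_{\WmodG}\cdot\beta\geq 0$ needed to truncate the $z$-expansion at the $J_1^\ke$-coefficient, but the argument demands a careful compatibility check between the $\ke$-stability condition, the virtual normal bundle of the fixed locus, and the marked-point constraints. This is also the step where the extension from toric targets to non-abelian local Calabi-Yau quotients would require an additional abelian/non-abelian reduction in the spirit of \cite{CK0}; outside these controlled classes the truncation argument is not obviously available, which is why the unconditional statement remains a conjecture.
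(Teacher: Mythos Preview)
The first thing to note is that the statement you are attempting to prove is a \emph{conjecture} in the paper, not a theorem. The paper does not claim a proof of \eqref{general} for general semi-positive targets; it proves the conjecture only in special cases (genus zero under a torus-action hypothesis, and all genera for semi-positive toric varieties and certain local Calabi-Yau quotients). So your proposal is not a reconstruction of the paper's argument but an attempt at something the paper explicitly leaves open.

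Your approach has a genuine gap at the step ``Running the same localization on the $\ke=\infty$ graph space and comparing the two identities.'' You localize $QG^\ke_{g,n,\beta}$ and $QG^\infty_{g,n,\beta}$ separately, but these are different spaces with no a priori reason to yield the same integral; there is no common target to which both push forward, and no polynomiality or Birkhoff-type uniqueness available in higher genus to force a match. In genus zero this circularity is broken because the $J$-function is characterized by its asymptotics on Givental's Lagrangian cone, but no analogous characterization exists for the genus $g$ tail contribution $\langle \frac{\gamma}{z-\psi_\bullet},\delta_1\psi^{a_1},\dots\rangle^\ke_{g,n+1}$. Moreover, your truncation claim is miscast: semi-positivity ensures $J^\ke_{sm}(z)=J_0^\ke+\tfrac{J_1^\ke}{z}+O(z^{-2})$, but the node-smoothing factor $\tfrac{1}{z-\psi_\bullet}$ on the genus $g$ side contributes all powers of $1/z$, so the product does not truncate. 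You never explain the mechanism that converts the $J_1^\ke$-term into $m$ additional primary insertions with a $1/m!$ factor on the Gromov--Witten side.

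For comparison, the paper's proof in the toric case takes a completely different route: it upgrades \eqref{general} to a cycle-level identity in $\mathrm{Q}^\ke_{g,k}(\PP^N,d(\beta))$, applies $\T$-localization (for the big torus on $X$, not the $\CC^*$ on a parametrized $\PP^1$) to both sides, and matches fixed-locus contributions graph by graph. The key point is that the vertex contributions are polynomials in $\hat\psi_j$ and diagonal classes $D_J$ which are \emph{independent of $g$ and $k$}; the MOP genus-reduction lemma then reduces the matching to integrals over genus zero strata, where a localized uniqueness argument on graph spaces (now legitimately available) finishes the job. The $\CC^*$-graph-space localization you propose does appear in the paper, but only at this final genus zero step, after the genus has already been eliminated.
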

Note that the string and divisor equations in Gromov-Witten theory allows one to rewrite 
the right-hand sides of \eqref{general}
and \eqref{unpointed genus 1} in terms of invariants with the same insertions as in the left-hand sides.

\subsection{Results} The first evidence we give for the conjectures is that they hold in genus zero for a large class of targets. 

\begin{Thm}\label{genus zero intro} Let $(W,\G,\theta)$ be semi-positive. Assume that $W$ admits an action by a torus $\T$, commuting with the action of $\G$ and such 
that the fixed points of the induced $\T$-action on $\WmodG$ are
isolated. Then the $g=0$ cases of Conjectures \ref{main conj intro} and \ref{equivalent} hold. 
Moreover, if $E$ is a convex $\G$-representation such that for all $\theta$-effective $\beta$ we have $\beta(\det(T_W))-\beta(W\times \det(E))\geq 0$, 
then the conjectures also hold at $g=0$ for the $E$-twisted $\ke$-quasimap
theories of $\WmodG$.

\end{Thm}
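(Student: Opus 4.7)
The plan is to extend the $\T$-equivariant virtual localization argument of \cite{CK0}, which handled the one-descendant case in genus zero, to arbitrarily many descendants. The torus $\T$ commutes with $\G$ and so induces $\T$-actions on every moduli stack $\QmapWe$, preserving equivariant virtual classes, evaluation maps, and $\psi$-classes. Under the isolated fixed-point hypothesis on $\WmodG$, the $\T$-fixed loci of $\QmapWe$ in genus zero decompose according to decorated trees whose vertices lie at $\T$-fixed points of $\WmodG$ and whose edges correspond to $\T$-invariant multi-covers of one-dimensional $\T$-orbit closures. The stability parameter $\ke$ enters only \emph{locally} through the allowed rational tails sprouting at each vertex; the edge contributions are manifestly $\ke$-independent.

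The central local input is the graph space analysis of \cite{CK0}: the generating function of $\T$-fixed contributions on the $\ke$-graph space $\QGraphe$, pushed forward via evaluation at a distinguished marking to a fixed point $p\in\WmodG^\T$, recovers the restriction $J^\ke|_p$ of the small $J^\ke$-function. Semi-positivity ensures that $J^\ke$ is a power series in $1/z$ with only $J_0^\ke$ and $J_1^\ke$ contributing to the wall-crossing (the higher $1/z^{\geq 2}$ part lies on the Givental Lagrangian cone of the Gromov-Witten theory and therefore contributes trivially to the comparison at the non-vertex markings). Matching the two $\T$-localization formulas, each unmarked vertex picks up an overall factor $J_0^\ke|_p$, while each $\ke$-unstable rational tail collapsed at a marking translates, on the $\infty$-side, into an extra marked point carrying the insertion $J_1^\ke/J_0^\ke$. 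Regrouping the tree sum then reproduces both the prefactor $(J_0^\ke)^{-2+n}$ and the Taylor sum in $m$ additional markings on the right-hand side of \eqref{general} at $g=0$.

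For the $E$-twisted theory, the argument proceeds verbatim after replacing the virtual class by $e^\T(R\pi_*f^*\cE)\cap[\QmapWe]^{vir}$; convexity of $E$ makes $R\pi_*f^*\cE$ a vector bundle over the moduli, and the hypothesis $\beta(\det(T_W))\geq \beta(W\times \det(E))$ is precisely the twisted semi-positivity condition keeping the twisted small $J$-function in the asymptotic class $J_0^\ke\one + J_1^\ke/z + O(1/z^2)$. The main obstacle is bookkeeping the descendant $\psi$-classes at vertices carrying several markings of mixed descendant orders: one must transport $\psi^{a_i}$ through the stabilization morphism that collapses $\ke$-unstable rational tails and verify that the resulting $J^\ke$-expansion at each vertex assembles into the advertised formula. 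Semi-positivity is precisely what makes this matching finite, reducing it to the computation already carried out in the one-descendant setting of \cite{CK0}.
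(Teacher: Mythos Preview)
Your approach is genuinely different from the paper's, and it has real gaps.

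The paper does \emph{not} argue by direct $\T$-localization on the quasimap moduli spaces $\Qke$ with descendant insertions. Its proof runs as follows. First, the genus zero Topological Recursion Relation is shown to hold for every stability parameter $\ke$ (Corollary~\ref{TRR}), as an immediate consequence of the $\psi$-class comparison Lemma~\ref{psi comparison}, whose proof is $\ke$-independent. Second, the two-descendant case is handled separately (Theorem~\ref{Two_Pointed}): one introduces an $\widetilde{A_2}$-graph space carrying \emph{two} parametrized rational components, applies localization for the two-torus $\bS$ acting on $\widetilde{A_2}$, and extracts the identity $V^\ke_t=(S^\ke_t(z)\otimes S^\ke_t(w))([\Delta])/(z+w)$. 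Third, a short induction on the number $n$ of markings, using the $n$-point TRR \eqref{n point TRR} at each step, reduces the general descendant statement to the one-descendant case, which is Proposition~\ref{summary}(iv) already established in \cite{CK0}. The $E$-twisted version follows verbatim, since TRR, Theorem~\ref{Two_Pointed}, and Proposition~\ref{summary} all hold in the twisted setting.

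Your proposal---matching $\T$-fixed loci on $\Qke$ and on $\MkW$ tree-by-tree---is closer in spirit to the \emph{higher-genus} argument of \S5, which is carried out only for toric targets and is far more involved. Two concrete problems with it as a proof of Theorem~\ref{genus zero intro}:
\begin{itemize}
\item The tree decomposition you invoke, with edges given by multi-covers of one-dimensional $\T$-orbit closures, requires the one-dimensional $\T$-orbits in $\WmodG$ to be isolated. The theorem assumes only isolated fixed \emph{points}; under that hypothesis alone the $\T$-fixed loci in the moduli spaces need not have the combinatorial structure you describe.
\item The vertex-matching step---your assertion that collapsed $\ke$-unstable rational tails produce exactly the scalar $J_0^\ke$ and the extra insertion $J_1^\ke/J_0^\ke$---is the heart of the matter, and ``the graph space analysis of \cite{CK0}'' does not supply it. In the paper's \S5 the analogous comparison for toric targets (Lemma~\ref{vertexmatching}) requires the MOP genus-reduction lemma, a splitting lemma for boundary strata, and a new Localized Uniqueness Lemma (Lemma~\ref{uniqueness lemma}). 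Your sketch asserts the outcome without providing a mechanism; the claim that the $1/z^{\geq 2}$ part of $J^\ke$ ``contributes trivially to the comparison at the non-vertex markings'' is neither formulated precisely nor justified.
\end{itemize}
In short, the paper trades direct localization on moduli for a universal recursion (TRR) valid for all $\ke$, which collapses the problem to the $J$-function comparison already known from \cite{CK0}. Your route would require carrying out the vertex matching under the weak isolated-fixed-point hypothesis, and that is not done here.
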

The inclusion of $E$-twisting means that Theorem \ref{genus zero intro} covers the compact Calabi-Yau targets which are realised as zero loci of sections of homogeneous vector
bundles on toric manifolds, on flag manifolds of classical types, or on products of such.
The proof of this result given in \S3 below can be summarized as follows: by first extending Dubrovin's genus zero reconstruction \cite{Dubrovin} to all $\ke$-theories, we reduce to
the case of big $J$-functions, which was already established in \cite{CK0}.

The second evidence is the main result of the paper.
\begin{Thm}\label{Main Thm}
Let $X$ be a nonsingular quasi-projective
toric variety of dimension $n$, obtained as the GIT quotient of a semi-positive triple $(\CC^{n+r}, (\CC^*)^r, \theta)$. Then Conjectures \ref{main conj intro} and \ref{equivalent} hold
for $X$.
\end{Thm}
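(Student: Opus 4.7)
The plan is to prove the theorem by virtual torus localization, using the $\T$-action on $X$ inherited from the coordinate action on $\CC^{n+r}$. Since $X$ is a smooth toric variety, $X^{\T}$ is a finite set of isolated points, so the Graber--Pandharipande formula applies to both $\MgkX$ and $\Qgke$. Each side of \eqref{general} then expresses as a sum over decorated graphs $\Gamma$ whose vertices are labeled by a $\T$-fixed point $p_i\in X^{\T}$ and a genus $g_v$, and whose edges correspond to degree-$d$ multi-covers of $\T$-invariant toric lines.

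First I would observe that on both sides the \emph{edge} contributions are identical: a $\T$-invariant multi-cover of an invariant toric line carries no base points, so it is an $\ke$-stable quasimap for every $\ke$ and simultaneously an honest stable map. All wall-crossing discrepancy is therefore confined to the vertex contributions. At a vertex $v$ over $p_i$ with genus $g_v$ and valence $n_v$, the GW-vertex is an integral over $\oM_{g_v,n_v}$ against $\psi$-classes and the local equivariant Euler factors at $p_i$; the $\ke$-vertex is a sum, indexed by the possible orders of base points at the flags/legs, of integrals over $\oM_{g_v,n_v+m}$ weighted by the coefficients of the local series $I_{sm}(q,z)|_{p_i}$ truncated according to $\ke$.

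Next I would package the sum over base-point profiles at one leg into a generating series and identify it, at the fixed point $p_i$, with the local restriction of $J^\ke_{sm}/J_0^\ke$. Semi-positivity is crucial here, since it forces
$$J^\ke_{sm}(q,z)|_{p_i}=J_0^\ke(q)\,\one_{p_i}+\tfrac{1}{z}\,J_1^\ke(q)|_{p_i}+O(1/z^2),$$
and only the degree-$1$ piece survives after pairing with $\psi$-class integrals under the virtual dimension count. The per-leg substitution on the GW side is therefore exactly $\delta\mapsto (\delta+J_1^\ke/J_0^\ke)/J_0^\ke$, and aggregating over all $n$ external legs and $2g-2$ units of gluing Euler characteristic reproduces the prefactor $(J_0^\ke)^{2g-2+n}$ appearing in \eqref{general}. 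The $(g,n)=(1,0)$ case is treated separately: an unstable $(g_v,n_v)=(1,0)$ vertex sitting over $p_i$ contributes $\frac{1}{24}\log J_0^\ke$, and summing over $p_i\in X^{\T}$ recovers the $\frac{1}{24}\chi_{\mathrm{top}}(X)\log J_0^\ke$ correction of \eqref{unpointed genus 1}.

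The main obstacle is the per-vertex identity which states that the $\ke$-vertex generating series at $p_i$, summed over base-point profiles at one leg, equals the GW-vertex with insertion $(\delta+J_1^\ke/J_0^\ke)/J_0^\ke$. In genus zero this is essentially the content of \cite{CK0} and the genus-zero case of Theorem \ref{genus zero intro}, but at higher genus it does not follow from genus-zero reconstruction. I would prove it by introducing the graph space $QG^\ke_{g,k,\beta}(X)$ carrying an extra $\CC^*$-action that scales the distinguished $\PP^1$-component of the domain, and applying virtual $\CC^*$-localization: the fixed locus over $0\in\PP^1$ recovers $\Qgke$-integrals with one $J^\ke$-insertion, while the locus over $\infty\in\PP^1$ recovers $\MgkX$-integrals with insertions drawn from the same series, and the equality of the two fixed-point computations yields the required identity. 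Combining this per-vertex identity with the $\T$-localization graph sum on $X$ assembles into \eqref{general} and \eqref{unpointed genus 1}. Semi-positivity guarantees that the $1/z$-expansion of $J^\ke_{sm}|_{p_i}$ has bounded order after virtual dimension pairing, so the graph-space localization produces no poles at $z=0$ beyond those accounted for by the two terms $J_0^\ke$ and $J_1^\ke$.
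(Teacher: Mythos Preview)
Your outline begins correctly: $\T$-localization reduces the comparison to vertex contributions, with edge and flag pieces matching automatically. But the proposed mechanism for proving the per-vertex identity has a genuine gap.

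The higher-genus graph space $QG^\ke_{g,k,\beta}(X)$ does not do what you claim. Its $\CC^*$-fixed loci over both $0$ and $\infty$ involve $\ke$-stable quasimaps; neither side produces stable-map integrals. So $\CC^*$-localization on a single graph space cannot directly compare the $\ke$-theory to the $\infty$-theory. In the paper's argument, graph spaces enter only in \emph{genus zero}, and one must separately localize on the \emph{shifted stable-map} graph space and on the \emph{quasimap} graph space, then compare via a pole-cancellation argument (the ``Localized Uniqueness Lemma'').

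What you are missing is the reduction to genus zero. The key observation (following Marian--Oprea--Pandharipande) is that the $\T$-vertex contributions on both sides, after push-forward to the Hassett space $\oM_{g,k|d(\beta)}$, are polynomials in the cotangent classes $\hat\psi_j$ and diagonal classes $D_J$ whose coefficients are \emph{independent of $g$ and $k$}. This is not obvious: on the quasimap side it requires an explicit analysis of $H^\bullet(C_v,Q)$ via the Euler sequence, and on the shifted-map side it uses the cotangent calculus of \cite{MOP}. Once this genus-independence is established, a lemma of \cite{MOP} reduces the equality of abstract polynomials to vanishing of integrals over chain-type strata in \emph{genus-zero} Hassett spaces. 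That genus-zero statement is then proved by induction on $\beta$ and on the stratum depth, with the base case handled by the graph-space Uniqueness Lemma.

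Finally, note that for semi-positive toric triples one has $I_0(q)=1$ identically (a short combinatorial check from the explicit $I$-function), so $J_0^\ke=1$ and the prefactor $(J_0^\ke)^{2g-2+n}$ and the $\frac{1}{24}\chi_{\mathrm{top}}\log J_0^\ke$ correction both trivialize. Your discussion of how these terms arise is therefore moot for this target.
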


The GIT presentation of a toric variety considered in Theorem \ref{Main Thm} is the standard one, coming from its fan, as in \cite{Cox}.  
(However, as explained in \S\ref{GIT presentations} later, the result holds for any other semi-positive
GIT presentation $(\CC^{n+r'}, (\CC^*)^{r'}, \theta')$ of $X$; note that the quasimap theories are different for different GIT presentations.)

It is easy to see that semi-positive toric varieties have $I_0(q)=1$.
When $X$ is projective and Fano, we have in addition (for the standard GIT presentation) that $I_1(q)=0$. The following Corollary is then an immediate consequence Theorem \ref{Main Thm}.
\begin{Cor} If $X$ is a nonsingular projective Fano toric variety, then its quasimap invariants are independent on $\ke$:
$$F_g^\ke({\bf t}(\psi))= F^\infty_g({\bf t}(\psi)),$$
for all $\ke\geq 0+$.
\end{Cor}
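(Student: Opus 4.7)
The plan is to deduce the corollary as a direct substitution into the wall-crossing formula established by Theorem \ref{Main Thm}. All the substantive work has been done there; what remains is to check that the Fano hypothesis makes the transformation appearing in Conjecture \ref{main conj intro} trivial.

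First I would confirm the two structural vanishings underpinning the statement. For any semi-positive toric GIT presentation, the explicit hypergeometric formula for the toric small $I$-function shows that every non-zero $\theta$-effective class $\beta$ contributes a series in $1/z$ with no $z^0$ term, so $I_0(q) \equiv 1$. When $X$ is in addition projective and Fano in its standard GIT presentation, every non-zero $\theta$-effective $\beta$ satisfies $\beta(-K_X) \geq 1$, and inspection of the leading $z$-orders in the same formula forces the $1/z$ coefficient to vanish as well, i.e.\ $I_1(q) \equiv 0$. Because $J_0^\ke$ and $J_1^\ke$ are polynomial $q$-truncations of $I_0$ and $I_1$ for $\ke>0$ (and agree with $I_0, I_1$ at $\ke=0+$), these two vanishings propagate to $J_0^\ke(q) \equiv 1$ and $J_1^\ke(q) \equiv 0$ for every $\ke \geq 0+$.

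Next I would substitute these values into the wall-crossing identity
$$(J_0^\ke)^{2g-2}\, F_g^\ke({\bf t}(\psi)) \;=\; F_g^\infty\!\left(\frac{{\bf t}(\psi) + J_1^\ke}{J_0^\ke}\right),$$
supplied by Theorem \ref{Main Thm} via Conjecture \ref{main conj intro}. The prefactor becomes $1^{2g-2}=1$ and the argument of $F_g^\infty$ collapses to ${\bf t}(\psi)$, yielding the desired equality $F_g^\ke({\bf t}(\psi)) = F_g^\infty({\bf t}(\psi))$ in all genera.

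The only point that requires a second glance is $g=1$, where the definition of $F_1^\ke$ carries the extra correction term $\tfrac{1}{24}\chi_{\mathrm{top}}(X) \log J_0^\ke$; since $J_0^\ke \equiv 1$ forces this correction to vanish on both sides, no discrepancy arises. Similarly, in genus zero the conventional discarding of $t_{ji}$-degree $\leq 1$ terms is compatible with the identity transformation. The genuine obstacle, namely the proof of the wall-crossing formula itself for semi-positive toric targets, is precisely what Theorem \ref{Main Thm} supplies; once that is granted, the corollary is a purely formal substitution.
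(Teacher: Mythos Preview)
Your proposal is correct and follows exactly the approach the paper takes: the paper states just before the corollary that semi-positive toric varieties have $I_0(q)=1$ and that projective Fano toric varieties (in the standard presentation) additionally have $I_1(q)=0$, then declares the corollary an immediate consequence of Theorem~\ref{Main Thm}. Your write-up simply makes the substitution explicit, including the harmless checks for the $g=1$ correction term and the $g=0$ conventions.
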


More interesting is the case of toric Calabi-Yau targets, for which Theorem \ref{Main Thm} is highly relevant to the Physicists' Mirror Symmetry (see e.g., \cite{HV}) at genus $g\geq 1$. 
For $X$ a
toric Calabi-Yau $3$-fold, consider the equality \eqref{GW to epsilon} for $\ke=0+$ and specialize at ${\bf t}(\psi)=0$. If we use the string and divisor equations,
the Gromov-Witten side
becomes precisely the $A$-model genus $g$ pre-potential {\it after} applying the mirror map. The Mirror Conjecture then implies that $F_g^{0+}|_{{\bf t}(\psi)=0}$ is {\it equal} to the $B$-model
genus $g$ pre-potential, expanded near a ``large complex structure" point for the mirror of $X$.

The proof of Theorem \ref{Main Thm} is given in sections 4 and 5 of the paper. In fact, we formulate and prove a stronger cycle-level comparison for the virtual classes under change of stability parameter from 
$\infty$ to some $\ke\geq 0+$. The first statement of this kind was established by Marian, Oprea, and Pandharipande in \cite{MOP} when the target $\WmodG$ is a Grassmannian.  
Our proof for toric targets is inspired by theirs, and in particular uses crucially a genus-reduction lemma from \cite{MOP}, but requires also several completely new ideas.

As we remark at the end of \S5, the proof we give works for other interesting non-compact Calabi-Yau GIT targets. In particular we obtain the following result.
\begin{Thm}\label{local Grass} Let $X$ be the total space of the canonical bundle over a Grassmannian, viewed as a GIT quotient in the canonical way.
Then Conjectures \ref{main conj intro} and \ref{equivalent} hold
for $X$.
More generally, the same is true for the total space of the canonical bundle over any type $A$ partial flag manifold.
\end{Thm}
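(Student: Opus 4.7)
The plan is to follow closely the proof of Theorem \ref{Main Thm} in Sections 4--5, whose structure is designed to apply, with minimal modification, to any semi-positive non-compact Calabi-Yau GIT target that admits a suitable torus-localization setup. The genus zero portion of Theorem \ref{local Grass} is in fact already covered by Theorem \ref{genus zero intro}: $Y = Gr(k,n)$ is the GIT quotient of $(W_0,GL_k,\theta)=(\Hom(\CC^k,\CC^n),GL_k,\det)$, carries a torus action with isolated fixed points (the Schubert points), and $E = (\det)^{n}$ is a convex one-dimensional $GL_k$-representation producing the bundle $K_Y^{\vee}$ on the quotient; passing from the $E$-twisted theory of $Y$ to the equivariant theory of $\mathrm{Tot}(K_Y)$ is a standard fiber-scaling localization. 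The partial flag case is entirely analogous, using $\G=\prod_i GL_{k_i}$ and an appropriate character on the extra $\CC$-factor. Only the higher-genus statements require new input.

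For $g\geq 1$, I realize $X=\mathrm{Tot}(K_Y)$ directly as a GIT quotient $\WmodG$ with $W = W_0\oplus \CC$ and the $\G$-action on the extra $\CC$-factor given by the character corresponding to $K_Y^{\vee}$. Introducing a torus $\T$ that scales the $K_Y$-fiber makes all $\ke$-quasimap invariants of $X$ well-defined equivariant integrals, and $\T$-localization along the fiber scaling converts these into Euler-class-twisted $\ke$-quasimap invariants of $Y$, exactly as in the genus zero argument. Both sides of the wall-crossing identity in Conjecture \ref{main conj intro} are compatible with this reduction, and the small $J$-series $J_0^{\ke},J_1^{\ke}$ of $X$ are controlled by those of $Y$ through the usual twisting formulas.

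I then run the cycle-level wall-crossing argument of Sections 4--5 on the $K_Y$-twisted $\ke$-quasimap theory of $Y$. The genus-reduction lemma of \cite{MOP}, originally developed precisely in the Grassmannian context, applies without change. The rational-tail analysis and the virtual pullback identities of Section 5 must be recast in the Quot-scheme model of quasimaps to Grassmannians, and iteratively in its flag-manifold version; the key point, however, is that the difference between $\ke=\infty$ and $\ke$-stability is concentrated on base points and rational tails whose local contribution admits a universal description, so the toric arguments transfer with only notational modifications.

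The main obstacle is precisely this non-abelian analog of the rational-tail comparison. In the toric case a rational tail is labeled by a single element of the Picard group and its local contribution is an explicit hypergeometric factor; for a Grassmannian target a rational tail carries a rank-$k$ vector bundle whose degree drops across the attaching node, and one must verify that the $K_Y$-twisted universal contribution matches term-for-term on the $\ke=\infty$ and the finite-$\ke$ sides of the formula. This is the technical core of the argument. Once this is done for $Gr(k,n)$, the iterated Grassmannian-fibration structure of type $A$ partial flag manifolds lets one propagate the comparison up the tower, and the semi-positivity of the triple ensures that the twisted $J_0$ and $J_1$ behave uniformly at each stage, so no additional conceptual input is needed for the flag case.
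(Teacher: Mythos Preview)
Your overall strategy---run the Section~4--5 localization argument for the local Calabi-Yau target, invoking the MOP genus-reduction lemma---is the same as the paper's, but your framing is more roundabout than necessary and you misidentify where the real content lies.

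First, the detour through the $K_Y$-twisted theory of $Y$ is unnecessary. As explained in \S2.5 of the paper, for a \emph{concave} representation $E$ (here the character giving $K_Y$, not $K_Y^\vee$ as you wrote), the inverse-Euler-class twisted theory of $Y$ coincides in \emph{all} genera with the untwisted $\T$-equivariant theory of $X=\mathrm{Tot}(K_Y)$. So the paper simply works directly with $X$ as a GIT quotient and applies $\T$-localization on $\QmapXe$ itself; there is no need to ``reduce to $Y$'' and then re-inject the twist.

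Second, and more importantly, the paper does \emph{not} attempt the ``term-for-term'' matching of rational-tail contributions that you flag as the technical core. Instead it isolates three structural properties that a target must satisfy for the toric proof to go through verbatim: (1) $I_0=1$; (2) isolated $\T$-fixed points and isolated one-dimensional $\T$-orbits; (3) the $\T$-vertex contributions $\mathrm{Q}_v$ on the quasimap side, pushed forward to the Hassett space, are polynomials in the classes $\hat\psi_j$ and $D_J$ with coefficients \emph{independent of $g$ and $k$}. Once (3) holds, the MOP Lemma (Lemma~\ref{MOPlemma}) reduces everything to genus zero, and the Localized Uniqueness Lemma plus the Splitting Lemma finish the argument without any explicit hypergeometric matching---the only external input is the genus-zero equality $I_{sm}=J(q,I_1,z)$, supplied by Proposition~\ref{summary}$(iii)$. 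For local Grassmannians, (1) is Remark~5.5.6 of \cite{CK0}, (2) is immediate, and (3) is exactly the computation already carried out in \cite{MOP} (and \cite{Toda}); the flag case follows by the same cotangent calculus. So the ``main obstacle'' you describe is not a new verification to be performed, but a citation.
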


\subsection{Aknowledgments}  
The research of I.C.-F. was partially supported by the NSA grant
H98230-11-1-0125 and the NSF grant DMS-1305004. The research of B.K. was partially supported by NRF-2007-0093859. In addition, I.C-F. thanks KIAS for financial support, excellent
working conditions, and an inspiring research environment. The authors thank the referee for helpful suggestions and Hyenho Lho for pointing out an error in an earlier version.

\section{Quasimap CohFT} To fix notation, in this section we recall briefly (after \cite{CKM}) the Cohomological Field Theory defined by the moduli spaces of $\ke$-stable quasimaps. 
\subsection{Quotients} \label{quotients}
Consider a triple $(W,\G, \theta)$ with $W$ an affine complex algebraic variety, $\G$ a reductive complex algebraic group acting on $W$, and $\theta\in\chi(\G)$ a character
of $\G$. The $\G$-equivariant line bundle $L_\theta=W\times \CC_\theta$, exhibits $\theta$ as a linearization of the action on the trivial line bundle. Hence $(W,\G, \theta)$ determines a GIT
quotient $\WmodG$. It has a projective morphism to the {\it affine quotient} $W/_{\mathrm {aff}}\G=\Spec(A(W)^\G)$. The line bundle $L_\theta$ descends to a relative polarization
on $\WmodG$, denoted by $\cO(\theta)$, which may be taken (without loss of generality) to be relatively very ample over $W/_{\mathrm {aff}}\G$.

We assume that 
the semi-stable and stable loci for the $\theta$-linearization coincide, $W^{ss}=W^s$, and that $\G$ acts freely on the stable locus. 
Further, we also assume that $W$ has at worst lci singularities and that
$W^s$ is smooth. Hence $\WmodG=[W^s/\G]$ is a nonsingular open substack in the quotient stack $[W/\G]$.

\subsection {$\ke$-stable quasimaps} \label{e-qmaps}
Let $C$ be a connected, at worst nodal, algebraic curve. A map $f:C\lra[W/\G]$ has a (homology) class 
$$\beta\in (\Pic ([W/\G]))^\vee=\Hom_\ZZ(\Pic ([W/\G]),\ZZ)=\Hom_\ZZ(\Pic ^\G(W),\ZZ)$$
given by 
$\beta(L)=\deg f^*L$. The map $f$ is a {\it quasimap to} $\WmodG$ if it sends the generic point of each irreducible component of $C$ to $\WmodG$. We may view the quasimap as a {\it rational} map from $C$ to $\WmodG$. The (finitely many) points of $C$ which are sent by $f$ to the complement of
$\WmodG$ in $[W/\G]$ are called the {\it base-points} of the quasimap.

A class $\beta\in (\Pic ([W/\G]))^\vee$ is
said to be $\theta$-{\it effective} if it is represented by a quasimap to $\WmodG$. For the purposes of this definition, the domain curve is allowed to have finitely many connected components.
The $\theta$-{effective} classes form a semigroup, denoted ${\mathrm {Eff}}(W,\G,\theta)$. By the boundedness
results in \cite{CKM}, for each $d>0$ the set
$$\{\beta\in {\mathrm {Eff}}(W,\G,\theta)\; |\: \beta(L_\theta)\leq d\}$$
is finite, see Remark 3.2.10 in loc.cit.

For each positive $\ke\in\QQ$ there is a stability condition on quasimaps;  
in addition, there is an asymptotic stability condition, denoted $\ke=0+$, in which $\ke$ is
allowed to be arbitrarily small (but still positive), see \cite{CKM}, also \cite{CK0}, \S2.4.

From now on we assume that $(g,k)\neq(0,0)$. For $\ke\geq 0+$, let $\QmapWe$ denote the moduli space of
$\ke$-stable quasimaps of class $\beta$ from $k$-pointed genus $g$ nodal curves to $\WmodG$. It is shown in \cite{CKM} that when the triple
$(W,\G,\theta)$ satisfies the assumptions of \S\ref{quotients} above, these moduli spaces are Deligne-Mumford stacks, proper over
the affine quotient $W/_{\mathrm {aff}}\G$, and carrying canonical perfect obstruction theories. The lci condition on $W$ is necessary for the perfectness of the obstruction theory
when $\ke\leq 1$. The space $\QmapWe$ is potentially nonempty
only when
$$2g-2+k+\ke \beta(L_\theta) >0.$$
The virtual dimension is
$$\mathrm{vdim}(\QmapWe)=\beta(\det (T_W)) + (1-g)(\dim(\WmodG)-3)+k,$$
where $T_W\in K^\circ_\G (W)$ is the (virtual) $\G$-equivariant tangent bundle. 

For each fixed class $\beta$ the set $\QQ_{>0}$ is divided into stability chambers by finitely many walls $1,\frac{1}{2},\dots,\frac{1}{\beta(L_\theta)}$ such that
the moduli spaces remain constant in each chamber.
When $\ke\in(1,\infty)$ one recovers the Kontsevich moduli spaces of stable maps to $\WmodG$; we write $\ke=\infty$ for these stability conditions. The asymptotic stability condition $\ke=0+$
corresponds to being in the first chamber $(0,\frac{1}{\beta(L_\theta)}]$ for {\it all} $\beta$.

For a long list of examples to which quasimap theory applies, see \S2.8 of \cite{CK0}. 

\subsection{$\ke$-quasimap invariants, $\ke$-CohFT} \label{projective}
Let $\QQ[ {\mathrm {Eff}}(W,\G,\theta)]$ be the semigroup ring. We write $q^\beta$ for the element corresponding to 
$\beta\in {\mathrm {Eff}}(W,\G,\theta)$.
The Novikov ring associated to the triple $(W,\G,\theta)$ is the $\mathfrak{m}$-adic completion
$$\Lambda:=\widehat{\QQ[ {\mathrm {Eff}}(W,\G,\theta)]}$$ 
with respect to the maximal ideal $\mathfrak{m}$ generated by $\{ q^\beta\; |\; \beta\neq 0\}$.
Throughout the paper $H^*(\WmodG,\QQ)$ and $H^*(\WmodG,\Lambda)$ will denote the {\it even} cohomology with the indicated coefficients. 

In this subsection we assume that $\WmodG$ is projective; the extension to quasi-projective targets is discussed in the next subsection. We have the intersection pairing 
$$\lan\gamma,\delta\ran:=\int_{\WmodG}\gamma\delta $$
on cohomology. We extend it $\Lambda$-linearly to $H^*(\WmodG,\Lambda)$.

Once and for all, fix homogeneous bases $\{\gamma_1,\dots,\gamma_s\}$ and $\{\gamma^1,\dots ,\gamma^s\}$ of $H^*(\WmodG,\QQ)$, dual with respect to the pairing $\lan \; , \;\ran$.
The cohomology class of the diagonal $\Delta\subset \WmodG\times\WmodG$ is then $\sum_{i=1}^s\gamma_i\otimes\gamma^i$.

\subsubsection{Brackets and double brackets}
Let $\psi_i$ be the first Chern class of the canonical line bundle on $\Qgke$ with fiber the cotangent line to the domain curve at the $i^\mathrm{th}$ marking.
For any stability parameter $0+\leq \ke\leq\infty$, the descendant invariants of $\ke$-quasimap theory are defined by
\begin{equation}\label{e-invariants}
\langle \delta_1\psi_1^{a_1},\dots,\delta_k\psi_k^{a_k}
\rangle^\ke_{g,k,\beta}=\int_{[\Qgke]^{\mathrm{vir}}} \prod_{i=1}^k ev_i^*(\delta_i)\psi_i^{a_i},
\end{equation}
where $\delta_i\in H^*(\WmodG,\QQ)$, $ev_i:\Qgke\lra\WmodG$ are the evaluation maps at the markings, and $g,k,a_1,\dots a_k\geq 0$ are integers such that  $2g-2+k+\ke\beta(L_\theta)> 0$.

\begin{Rmk} In Gromov-Witten theory, the name ``invariants" for the brackets \eqref{e-invariants} 
reflects the fact that they are symplectic invariants of the target $\WmodG$ and in particular do not change under 
deformations of the target $\WmodG$.
The quasimap brackets depend on the {\it pair} of stacks $([W/\G],[W^s/\G])$, see Proposition 4.6.1 in \cite{CKM}. Nevertheless, we will use the same terminology in quasimap theory as well.
\end{Rmk}

When dealing with generating series of invariants, it is convenient to use a double bracket notation
\begin{equation}\label{double bracket}
\begin{split}
&\lla \delta_1\psi_1^{a_1},\dots,\delta_k\psi_k^{a_k} \rra^\ke_{g,k}=\lla \delta_1\psi_1^{a_1},\dots,\delta_k\psi_k^{a_k} \rra^\ke_{g,k}({\bf t}(\psi)):=\\
&\sum_{m,\beta} \frac{q^\beta}{m!}
\langle \delta_1\psi_1^{a_1},\dots,\delta_k\psi_k^{a_k},
{\bf t}(\psi_{k+1}), \dots ,{\bf t}(\psi_{k+m}) \rangle^\ke_{g,k+m,\beta}.
\end{split}
\end{equation}
Here, as in the Introduction, ${\bf t}(\psi)=t_0+t_1\psi+t_2\psi^2+t_3\psi^3+\dots$, with $t_j=\sum_i t_{ji}\gamma_i\in H^*(\WmodG,\QQ)$.
The double brackets are formal functions on the {\it large phase space} with coordinates $t_{ji}$. We will often need to specialize them to $t_1=t_2=\dots =0$ and will write
$$\lla \delta_1\psi_1^{a_1},\dots,\delta_k\psi_k^{a_k} \rra^\ke_{g,k}(t)=\lla \delta_1\psi_1^{a_1},\dots,\delta_k\psi_k^{a_k} \rra^\ke_{g,k}\mid_{{\bf t}=t}$$ 
for the specializations, with $t:=t_0=\sum_i t_{0i}\gamma_i$.

Apriori the unstable terms are omitted from the sum \eqref{double bracket}, though in many cases certain conventions will be made to include them as well. When we do that,
the appropriate conventions will be spelled out explicitly. 
In this notation, the genus $g$ potential (when $g\geq 1$) is the empty bracket
$$F^\ke_g ({\bf t}(\psi))=\lla \;\; \rra_{g,0}^\ke ({\bf t}(\psi)),$$
and then \eqref{double bracket} with $\delta_j=\gamma_{i_j}$ is the derivative $\frac{\partial}{\partial t_{a_1i_1}}\dots \frac{\partial}{\partial t_{a_ki_k}}F^\ke_g ({\bf t}(\psi))$.

\subsubsection{$\ke$-quasimap classes}
Let $$f:\Qgke\lra\Mgk$$ be the composition of the forgetful morphism $\Qgke\lra\fMgk$ with the stabilization morphism $\fMgk\lra\Mgk$.
If $2g-2+k>0$, define $\Lambda$-linear maps
\begin{equation}\label{Omega}
\begin{split}
&\Omega^\ke_{g,k} :H^*(\WmodG,\Lambda)^{\otimes k}\lra H^*(\Mgk,\Lambda), \\& \Omega^\ke_{g,k}(\otimes_{j=1}^k\delta_j)=
\sum_\beta q^\beta \Omega^\ke_{g,k,\beta}(\otimes_{j=1}^k\delta_j),
\end{split}
\end{equation}
by setting
\begin{equation}\label{Omega b}
 \Omega^\ke_{g,k,\beta}(\otimes_{j=1}^k\delta_j)=f_*( [\Qgke]^{\mathrm{vir}}\cap\prod_{j=1}^k ev_j^*(\delta_j)).
\end{equation}
The maps \eqref{Omega} are clearly equivariant for the actions of the symmetric group $S_k$ on the source and target. 

\subsubsection{$\ke$-CohFT}
The boundary of $\Qgke$ is the complement of the open stratum of quasimaps with irreducible and nonsingular domain curve. It has a recursive structure, with strata indexed by modular graphs 
and the virtual classes behave in a functorial way with respect to this structure. These facts are well-known for the moduli spaces of stable maps,
and one sees immediately that their standard proofs, as given in \cite{Behrend}, are $\ke$-independent. The general statement we will
use is the one appearing as Theorem 13 in \cite{Getzler}, with the spaces $\MgkW$ replaced by general $\Qgke$. 

In particular, we obtain virtual divisors covering the boundary as follows. Let $[k]:=\{1,2,\dots,k\}$. Let 
$$\mathcal{B}(g,k):=\{((g_1,S_1),(g_2,S_2))\;|\; g=g_1+g_2,\;  [k]=S_1\coprod S_2\;  \}.$$
be the set of ordered partitions of $(g,k)$. For each $\sigma\in \mathcal{B}(g,k)$ put
$$\widetilde{D}^\ke_\sigma:=\coprod_{\beta=\beta_1+\beta_2} \mathrm{Q}^\ke_{g_1,S_1\cup\bullet}(\WmodG,\beta_1)\times_{\WmodG} \mathrm{Q}^\ke_{g_2,S_2\cup\star}(\WmodG,\beta_2),$$
where the fiber product is over the evaluation maps at the additional markings. Alternatively, $\widetilde{D}^\ke_\sigma$ is defined by the fiber product diagram
\begin{equation*}
\begin{CD}
{\widetilde {D}}^\ke_\sigma  @>>> \coprod_{\beta_1+\beta_2=\beta}\mathrm{Q}^\ke_{g_1,S_1\cup\bullet}(\WmodG,\beta_1)\times \mathrm{Q}^\ke_{g_2,S_2\cup\star}(\WmodG,\beta_2)\\
@VVV @VV ev_\bullet\times ev_\star V\\
\WmodG @ >\Delta >> \WmodG\times\WmodG .
\end{CD}
\end{equation*}
There is a proper gluing map
$$\begin{CD}
\widetilde{D}^\ke_\sigma@>h_\sigma>> \Qgke,
\end{CD}
$$
and the boundary divisor $D^\ke_\sigma$ is
the stack-theoretic image of $h_\sigma$.
The virtual classes of $\widetilde{D}^\ke_\sigma$ and of $D^\ke_\sigma$ are defined by
\begin{equation}\label{bdry1}
[\widetilde{D}^\ke_\sigma]^{\mathrm{vir}}:=\Delta^! \sum_{\beta_1+\beta_2=\beta}
[\mathrm{Q}^\ke_{g_1,S_1\cup\bullet}(\WmodG,\beta_1)]^{\mathrm{vir}}\otimes [\mathrm{Q}^\ke_{g_2,S_2\cup\star}(\WmodG,\beta_2)]^{\mathrm{vir}},
\end{equation}
\begin{equation}\label{bdry2}
[D^\ke_\sigma]^{\mathrm{vir}}:=(h_\sigma)_*[\widetilde{D}^\ke_\sigma]^{\mathrm{vir}}.
\end{equation}

Similarly, there is a proper gluing map 
$$ h_0:\WmodG\times_{\WmodG\times\WmodG}\mathrm{Q}^\ke_{g-1,[k]+\bullet+\star}(\WmodG,\beta)\lra\Qgke ,$$
(fiber product over the diagonal map $\Delta$ and the pair $(ev_\bullet, ev_\star)$) whose image gives a boundary divisor $D_0^\ke$ with virtual class 
\begin{equation}\label{bdry3}
[D^\ke_0]^{\mathrm{vir}}=(h_0)_* \Delta^! [\mathrm{Q}^\ke_{g-1,[k]+\bullet+\star}(\WmodG,\beta) ]^{\mathrm{vir}}.
\end{equation}

This leads to the usual splitting properties for the brackets \eqref{e-invariants} and the classes \eqref{Omega b}. For example, if 
$[k]=S_1\coprod S_2$, $g_1+g_2=g$, with $|S_i|=k_i$, is a stable ordered partition, then
\begin{equation}\label{splitting 1} \rho^*\Omega_{g,k}^\ke (\otimes_{j=1}^k\delta_j)=\sum_i \Omega_{g_1,k_1+1}^\ke ((\otimes_{j\in S_1}\delta_j)\otimes\gamma_i)
\Omega_{g_2,k_2+1}^\ke ((\otimes_{j\in S_2}\delta_j)\otimes\gamma^i),
\end{equation}
where
$$\rho:\overline{M}_{g_1,S_1\cup\bullet}\times \overline{M}_{g_2,S_2\cup\star}\lra\Mgk$$ is the gluing map on stable curves.
Analogously, 
\begin{equation}\label{splitting 2}
\phi^* \Omega_{g,k}^\ke (\otimes_{j=1}^k\delta_j)=\sum_i \Omega_{g-1,k+2}^\ke ((\otimes_{j=1}^k\delta_j)\otimes\gamma_i\otimes\gamma^i),
\end{equation}
where $(g-1,k+2)$ is stable and
$$\phi :\overline{M}_{g-1,k+2}\lra\Mgk$$
is again the gluing map.
In other words, we have the following 
\begin{Prop}\label{CohFT} For each $\ke\geq 0+$
the maps \eqref{Omega} give the structure of a Cohomological Field Theory (CohFT) over $\Lambda$ on $H^*(\WmodG,\Lambda)$
with the metric $\lan \; , \;\ran$. 
\end{Prop}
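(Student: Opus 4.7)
The plan is to verify directly the axioms of a CohFT. A CohFT structure over $\Lambda$ on $(H^*(\WmodG,\Lambda), \lan\,,\,\ran)$ consists of $S_k$-equivariant $\Lambda$-linear maps $\Omega_{g,k}: H^*(\WmodG,\Lambda)^{\otimes k}\to H^*(\Mgk,\Lambda)$ compatible with the two types of boundary gluing morphisms $\rho$ and $\phi$, together with a unit axiom at the markings. The $S_k$-equivariance is immediate from the construction: the symmetric group acts compatibly on markings of $\Qgke$ and of $\Mgk$, and both $f$ and the evaluation maps $ev_i$ are equivariant, a property preserved by proper pushforward.

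For the separating gluing axiom \eqref{splitting 1}, I would apply $f_*$ to the restriction of $[\Qgke]^{\mathrm{vir}}\cdot\prod_j ev_j^*\delta_j$ to the boundary divisor $D^\ke_\sigma$. By \eqref{bdry2} this equals $(h_\sigma)_*[\widetilde{D}^\ke_\sigma]^{\mathrm{vir}}\cdot\prod_j ev_j^*\delta_j$; combining the commutative square $f\circ h_\sigma = \rho\circ(f_1\times f_2)$ with proper base change, the definition \eqref{bdry1} of $[\widetilde{D}^\ke_\sigma]^{\mathrm{vir}}$ via the diagonal operator $\Delta^!$, and the K\"unneth expansion $\Delta_*(1)=\sum_i \gamma_i\otimes\gamma^i$, one obtains exactly the right hand side of \eqref{splitting 1}. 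The non-separating axiom \eqref{splitting 2} is proved identically, with $(h_0, \phi)$ replacing $(h_\sigma, \rho)$ and \eqref{bdry3} replacing \eqref{bdry1}.

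The main obstacle --- and essentially the only nontrivial point --- is establishing the virtual compatibility $h_\sigma^*[\Qgke]^{\mathrm{vir}} = [\widetilde{D}^\ke_\sigma]^{\mathrm{vir}}$ (and its non-separating analogue), i.e.\ that the perfect obstruction theory on $\Qgke$ restricts in the expected way to each boundary divisor. This is the content of the $\ke$-analogue of Theorem 13 in \cite{Getzler}, and as indicated in the excerpt its proof is $\ke$-independent: the argument of \cite{Behrend} depends only on the deformation theory of nodal curves and on the relative obstruction theory over the fixed target $[W/\G]$, neither of which sees the stability parameter. Granted this compatibility, the unit axiom $\Omega^\ke_{g,k+1}(\cdots\otimes\one) = \pi^*\Omega^\ke_{g,k}(\cdots)$ (when $(g,k)$ is stable) and the normalization $\Omega^\ke_{0,3,0}(\delta_1\otimes\delta_2\otimes\one) = \lan\delta_1,\delta_2\ran$ follow by the standard string-equation argument applied to the marking-forgetful morphism on quasimap moduli, completing the verification.
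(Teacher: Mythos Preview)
Your treatment of $S_k$-equivariance and of the two gluing axioms is essentially the paper's own argument: the splitting properties \eqref{splitting 1}--\eqref{splitting 2} are derived from the boundary virtual class formulas \eqref{bdry1}--\eqref{bdry3}, which in turn rest on the $\ke$-independent functoriality of virtual classes from \cite{Behrend} (the ``Theorem 13 in \cite{Getzler}'' statement). That part is fine.

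The genuine problem is your final paragraph. You include a unit axiom as part of the CohFT structure and claim it follows from ``the standard string-equation argument applied to the marking-forgetful morphism on quasimap moduli.'' This is precisely what fails for $\ke\leq 1$: as the paper states explicitly in Remark~\ref{string etc}, the forgetful morphisms $\mathrm{Q}^\ke_{g,k+1}(\WmodG,\beta)\to\mathrm{Q}^\ke_{g,k}(\WmodG,\beta)$ need not exist, and even when they do the virtual classes are typically not compatible with pull-back. Hence $\one$ is \emph{not} in general a unit for the $\ke$-CohFT, and the string-equation argument you invoke is simply unavailable. The Proposition asserts only a CohFT \emph{without} unit (i.e., the $S_k$-equivariance and gluing axioms); the existence of a flat unit---conjecturally $J_0^\ke(q)\one$ in the semi-positive case---is discussed separately in \S\ref{string-dilaton} and remains open in general. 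Remove the unit axiom from your list of things to verify and your argument is complete.
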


The primary invariants \eqref{e-invariants} in the stable range $2g-2+k>0$ are given by the degree zero part of the CohFT,
$$\sum_\beta q^\beta\langle \delta_1,\dots,\delta_k
\rangle^\ke_{g,k,\beta}=\int_{\Mgk} \Omega_{g,k}^\ke (\otimes_{j=1}^k\delta_j) .$$ 
The genus zero potential $F^\ke_0(t)$ satisfies the WDVV equation and
determines the $\ke$-quasimap Frobenius manifold. The $\ke$-quantum product is given by
$$ \gamma_i\circ_\ke \gamma_j=\sum_{l=1}^s \gamma_l\lla\gamma_i,\gamma_j,\gamma^l\rra^\ke_{0,3}(t). $$
The unit for this product is discussed in Remarks \ref{string etc} and \ref{unit} below. 

Including $\psi$-classes on $\Mgk$ in the integrals, the CohFT gives the ``ancestor" invariants
\begin{equation}\label{ancestors}
\langle \delta_1\bar\psi_1^{a_1},\dots,\delta_k\bar\psi_k^{a_k}
\rangle^\ke_{g,k,\beta}:=\int_{[\Qgke]^{\mathrm{vir}}} \prod_{j=1}^k ev_j^*(\delta_j)\bar\psi_j^{a_j},
\end{equation}
with $\bar\psi_j:=f^*\psi_{j,k}$ and $\psi_{j,k}$ the $\psi$ class at the $j^{\mathrm{th}}$ marking on $\Mgk$.

\subsubsection{genus zero TRR}
For $m\geq 0, 2g-2+k>0$, consider the forgetful map
$$f: {\mathrm {Q}}^\ke_{g,k+m}(\WmodG,\beta)\lra\Mgk.$$
Fix $j\in[k]$. We have the following basic comparison of the classes $\psi_j$ and $f^*\psi_{j,k}$.

\begin{Lemma} \label{psi comparison}
Let $A_j\subset\mathcal{B}(g,k+m)$ be the subset consisting of ordered partitions $\sigma=((0,g), (S_1,S_2))$ with
$j\in S_1$ and $ [k]\setminus\{ j\}\subset S_2$. Then 
\begin{equation}\label{psi comparison eqn}
(\psi_j-f^*\psi_{j,k})\cap [\Qgke]^{\mathrm{vir}}=\sum_{\sigma\in A_j} [D^\ke_\sigma]^{\mathrm{vir}}.
\end{equation}
\end{Lemma}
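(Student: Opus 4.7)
The plan is to mimic the classical $\psi$-class comparison for stabilization morphisms and apply it in two stages. I factor $f$ as $f = \pi \circ \tilde f$, where $\tilde f \colon \mathrm{Q}^\ke_{g,k+m}(\WmodG,\beta) \to \overline{M}_{g,k+m}$ forgets only the quasimap datum while retaining all $k+m$ markings (and contracts those domain components that become unstable as pointed curves), and $\pi \colon \overline{M}_{g,k+m} \to \overline{M}_{g,k}$ is the classical forgetful-stabilization map for stable pointed curves. The two correction terms, one from $\tilde f$ and one from $\pi$, will be matched against the divisors $D^\ke_\sigma$ for $\sigma\in A_j$.

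On $\overline{M}_{g,k+m}$, iterating the single-marking forgetful formula gives
\[
\psi_j^{\overline{M}_{g,k+m}} = \pi^*\psi_{j,k} + \sum_{\emptyset\neq T \subset \{k+1,\dots,k+m\}} [\Delta_{j,T}],
\]
where $\Delta_{j,T}$ parametrizes stable pointed curves with a rational tail carrying exactly the markings $\{j\}\cup T$ and one node. For $\tilde f$, I would analyze the induced contraction morphism $c \colon \mathcal{C} \to \tilde f^*\mathcal{C}_{\overline{M}_{g,k+m}}$ between universal curves over $\mathrm{Q}^\ke_{g,k+m}(\WmodG,\beta)$. A local analysis, essentially identical to the stable-map case (cf.\ \cite{Behrend}), gives $\omega_{\mathcal{C}/\mathrm{Q}} \cong c^*\omega_{\tilde f^*\mathcal{C}/\mathrm{Q}}(R)$, with $R$ the exceptional divisor of $c$, supported on the rational tails contracted by $\tilde f$. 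Pulling back via the $j$-th marking section and using $\psi_j = \sigma_j^*\omega_{\mathcal{C}/\mathrm{Q}}$ yields
\[
\psi_j - \tilde f^*\psi_j^{\overline{M}_{g,k+m}} = \sigma_j^*R,
\]
supported on those boundary divisors where marking $j$ lies on a rational tail with one node, no other marking from $[k+m]$, and (necessarily by $\ke$-stability) positive quasimap degree $\beta_1>0$.

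To combine the two comparisons, I would pull back each $[\Delta_{j,T}]$ via $\tilde f$ and cap with the virtual class. Using the splitting compatibility of $[\mathrm{Q}^\ke_{g,k+m}]^{\mathrm{vir}}$ under boundary gluing encoded in \eqref{bdry1}-\eqref{bdry2}, this identifies $\tilde f^*[\Delta_{j,T}]\cap[\mathrm{Q}]^{\mathrm{vir}}$ with $\sum_{\beta_1+\beta_2=\beta}[D^\ke_{\sigma_T}]^{\mathrm{vir}}$ for $\sigma_T = ((0,g),(\{j\}\cup T,\,[k+m]\setminus(\{j\}\cup T)))$. The leftover contribution $\sigma_j^*R \cap [\mathrm{Q}]^{\mathrm{vir}}$ handles the $\sigma\in A_j$ with $S_1=\{j\}$, automatically forcing $\beta_1>0$. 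Summing over all nonempty $T$ together with the $T=\emptyset$ case, every $\ke$-stable $\sigma\in A_j$ is counted exactly once with its correct virtual multiplicity, yielding the desired identity.

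The main technical obstacle is the virtual version of the $\tilde f$-comparison: one must verify both $\omega_{\mathcal{C}/\mathrm{Q}} \cong c^*\omega_{\tilde f^*\mathcal{C}/\mathrm{Q}}(R)$ and the identification of $\sigma_j^*R \cap [\mathrm{Q}]^{\mathrm{vir}}$ with the sum of virtual boundary classes. The underlying classical argument is local on each contracted genus-$0$ tail and carries over directly to quasimaps; the lift to the virtual class follows from the functoriality of the perfect obstruction theory under the gluing morphisms $h_\sigma$, which is precisely the content of the splitting axioms \eqref{splitting 1}-\eqref{splitting 2} of the $\ke$-quasimap CohFT.
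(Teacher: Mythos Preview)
Your proposal is correct and is precisely an unpacking of what the paper intends: the paper's own proof is the single sentence ``The argument in Gromov-Witten theory is not $\ke$-dependent, so will work in general.'' Your factorization $f=\pi\circ\tilde f$ through $\overline{M}_{g,k+m}$, together with the two $\psi$-class comparison steps and the invocation of the boundary-splitting compatibility of the virtual class, is one of the standard ways to organize that Gromov--Witten argument, and nothing in it depends on the value of $\ke$.
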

\begin{proof} The argument in Gromov-Witten theory is not $\ke$-dependent, so will work in general.
\end{proof}

In Gromov-Witten theory, Lemma \ref{psi comparison}
is used to express descendant invariants in terms of ancestors. This can be done for quasimap invariants too, but we will not
deal with it in this paper. Instead, we note that another consequence of the Lemma, the genus zero Topolgical Recursion Relation (TRR) holds for all stability parameters.

\begin{Cor}\label{TRR} (Genus zero TRR) For all $\ke\geq 0+$, all $a_1,a_2,a_3\geq 0$ and all $\delta_1,\delta_2,\delta_3\in H^*(\WmodG,\QQ)$ we have

\begin{equation}
\lla \delta_1\psi_1^{a_1},\delta_2\psi_2^{a_2},\delta_3\psi_3^{a_3}\rra_{0,3}^\ke=
\sum_{i=1}^s \lla \delta_1\psi_1^{a_1-1},\gamma_i\rra_{0,2}^\ke \lla \gamma^i, \delta_2\psi_2^{a_2},\delta_3\psi_3^{a_3}\rra_{0,3}^\ke.
\end{equation}
\end{Cor}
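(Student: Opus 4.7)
The plan is to reduce the identity to a finite-order statement on each moduli space $\rQ^\ke_{0,3+m}(\WmodG,\beta)$, apply Lemma \ref{psi comparison}, and then use the CohFT splitting of the virtual class on the resulting boundary divisors.

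First, I would unfold the double-bracket definition \eqref{double bracket}. After pulling the sums over $(m,\beta,q^\beta/m!)$ outside, it suffices to prove, for each $m\geq 0$ and each $\theta$-effective $\beta$, the cycle-level identity
\begin{equation*}
ev_1^*(\delta_1)\psi_1^{a_1}\cap [\rQ^\ke_{0,3+m}(\WmodG,\beta)]^{\mathrm{vir}}
=\psi_1^{a_1-1}\cdot \sum_{\sigma\in A_1}[D^\ke_\sigma]^{\mathrm{vir}}\cdot ev_1^*(\delta_1),
\end{equation*}
and then integrate against $\prod_{j=2}^{3}ev_j^*(\delta_j)\psi_j^{a_j}$ and the insertions $\prod_{l=4}^{3+m}ev_l^*({\bf t}(\psi_l))$. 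The key observation is that for $k=3$ the target of the forgetful-stabilization map $f:\rQ^\ke_{0,3+m}(\WmodG,\beta)\to\overline{M}_{0,3}$ is a point, so the ancestor class $\bar\psi_1=f^*\psi_{1,3}$ vanishes. Lemma \ref{psi comparison} then gives
\begin{equation*}
\psi_1\cap [\rQ^\ke_{0,3+m}(\WmodG,\beta)]^{\mathrm{vir}}=\sum_{\sigma\in A_1}[D^\ke_\sigma]^{\mathrm{vir}},
\end{equation*}
where $A_1$ consists of ordered partitions $\sigma=((0,0),(S_1,S_2))$ with $1\in S_1$ and $\{2,3\}\subset S_2$.

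Next, I would push everything onto each boundary divisor via the gluing map $h_\sigma$. Since the cotangent line at marking $1$ is unaffected by the node, $h_\sigma^*\psi_1=\psi_1$ on the first factor of $\widetilde{D}^\ke_\sigma$; similarly $h_\sigma^*(ev_j^*\delta_j\,\psi_j^{a_j})$ factors onto whichever side carries marking $j$. Using the virtual splitting \eqref{bdry1} together with $[\Delta]=\sum_i \gamma_i\otimes\gamma^i$, the integral of the right-hand side over $[\widetilde D^\ke_\sigma]^{\mathrm{vir}}$ becomes, for each ordered partition $(S_1,S_2)$ of the extra markings $\{4,\dots,3+m\}$ with $|S_1|=m_1$, $|S_2|=m_2$, and each splitting $\beta=\beta_1+\beta_2$, a product
\begin{equation*}
\sum_i\langle \delta_1\psi_1^{a_1-1},\gamma_i,{\bf t}(\psi)^{S_1}\rangle^\ke_{0,2+m_1,\beta_1}\cdot\langle \gamma^i,\delta_2\psi_2^{a_2},\delta_3\psi_3^{a_3},{\bf t}(\psi)^{S_2}\rangle^\ke_{0,3+m_2,\beta_2}.
\end{equation*}

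Finally, I would reassemble the double brackets. Summing over $m,\beta$ with the prefactor $q^\beta/m!$, collecting $q^\beta=q^{\beta_1}q^{\beta_2}$, and using the combinatorial identity $\frac{1}{m!}\binom{m}{m_1}=\frac{1}{m_1!\,m_2!}$ to absorb the choice of which extra markings land on which side, the double sum factorises into the product
\begin{equation*}
\sum_i\lla \delta_1\psi_1^{a_1-1},\gamma_i\rra^\ke_{0,2}\,\lla \gamma^i,\delta_2\psi_2^{a_2},\delta_3\psi_3^{a_3}\rra^\ke_{0,3},
\end{equation*}
with the usual convention that unstable $(g_1,k_1,\beta_1)=(0,2,0)$ and $(g_2,k_2,\beta_2)=(0,3,0)$ triples are discarded.

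The only substantive step is the first one: invoking Lemma \ref{psi comparison} in the form appropriate to $k=3$. Everything else is formal bookkeeping of marking partitions, degree splittings and diagonal sums, identical to the Gromov--Witten proof and fully $\ke$-independent because the CohFT structure of Proposition \ref{CohFT} holds for every $\ke\geq 0+$.
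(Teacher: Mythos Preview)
Your proposal is correct and follows essentially the same approach as the paper: apply Lemma \ref{psi comparison} with $g=0$, $k=3$, use that $\bar\psi_1=f^*\psi_{1,3}=0$ since $\overline{M}_{0,3}$ is a point, and then unwind the boundary splitting via the diagonal class. The paper's proof is a one-line sketch of exactly this; you have simply filled in the bookkeeping (one small slip: the term $(g_2,k_2,\beta_2)=(0,3,0)$ is stable and is \emph{not} discarded --- only the $(0,2,0)$ term on the first factor is omitted).
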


\begin{proof}
Apply Lemma \ref{psi comparison} for $g=0$, $k=3$, and use the fact that $\psi_1$ vanishes on $\overline{M}_{0,3}=\Spec(\CC)$.
\end{proof}

\begin{Rmk}\label{string etc} Gromov-Witten invariants satisfy additional structures encoded in the string, dilaton, and divisor equations.
The reason behind these structures is that moduli of stable maps ($\ke=\infty$) admit forgetful lci morphisms
$$\mathrm{Q}^\infty_{g,k+1}(\WmodG,\beta)\lra \mathrm{Q}^\infty_{g,k}(\WmodG,\beta)$$
for which the virtual classes behave functorially.
These morphisms may fail to exist for stability parameters $\ke\leq 1$. Even when they do exist, the virtual classes often are not compatible with the pull-back.
In particular, for a general target $\WmodG$, the unit cohomology class $\one$ will be the unit for the CohFT only in the Gromov-Witten range of the stability parameter $\ke$.
The appropriate versions of string and dilaton equations for all $\ke$ in the case of semi-positive triples $(W,\G,\theta)$ are discussed later in the paper.
\end{Rmk}
\subsection{Equivariant theory and non-compact targets} \label{equivariant}
Suppose that $W$ admits an additional action by an algebraic torus ${\T}\cong(\CC^*)^n$, which commutes with action of $\G$.
There are induced actions on $[W/\G]$, on $\WmodG$, and on $W/_{\mathrm {aff}}\G$. The projective morphism $\WmodG\lra W/_{\mathrm {aff}}\G$ is $\T$-equivariant. Further, there are also induced 
$\T$-actions on the moduli spaces $\Qgke$ (and on the graph spaces recalled in \S3 below). 

We will always assume in this paper
that the locus of $\T$-fixed points in the affine quotient $W/_{\mathrm {aff}}\G$ is
proper. This assumption is automatic if $\WmodG$ is projective and holds for the natural torus actions on all interesting examples of non-compact targets, such as quasiprojective toric varieties, 
total spaces of bundles over projective quotients, and Nakajima quiver varieties.
It follows that the $\T$-fixed loci in $\WmodG$ and in all the moduli spaces of stable quasimaps are also proper.  
In this situation we get a $\T$-equivariant version of the CohFT, see e.g. \S6.3 in \cite{CKM}
and all results in \S\ref{projective} (which may be viewed as corresponding to the zero-dimensional torus $\T =\{1\}$) are valid in this setting. 

Precisely, let
$$\QQ[\lambda_1,\dots,\lambda_n]=H^*_{\T}(\Spec(\CC),\QQ)$$
and 
$$K:=\QQ(\lambda_1,\dots,\lambda_n)=H^*_{\T,\mathrm{loc}}(\Spec(\CC),\QQ)$$
be the equivariant cohomology, respectively the localized equivariant cohomology of a point. The cohomology ring of $\WmodG$ is replaced by the localized equivariant cohomology
$$H^*_{\T,\mathrm{loc}}(\WmodG,\QQ)=H^*_{\T}(\WmodG,\QQ)\otimes_{\QQ[\lambda_1,\dots,\lambda_n]} K,$$
while the Novikov ring is now $\Lambda=K[[q]]$. 

The pairing is defined by the localization formula
$$\lan\delta,\gamma\ran=\int_{\WmodG^{\T}} \frac{i^*(\delta\gamma)}{\mathrm{e}(N)},$$
where $i:\WmodG^{\T}\hookrightarrow \WmodG$  is the inclusion of the fixed point locus and $\mathrm{e}(N)$ is
the $\T$-equivariant Euler class of the normal bundle. Similarly, the $\T$-equivariant $\ke$-quasimap invariants are defined by the virtual localization formula,
\begin{equation}\label{equiv invariants}
\langle \delta_1\psi_1^{a_1},\dots,\delta_k\psi_k^{a_k}
\rangle^\ke_{g,k,\beta}:=\int_{[(\QmapWe)^{\T}]^{\mathrm{vir}}}\frac{i^*(\prod_j  ev_j^*(\delta_j)\psi_j^{a_j})}{\mathrm{e}(N^{\mathrm{vir}})},
\end{equation}
with $i:(\QmapWe)^{\T} \hookrightarrow\QmapWe$.
Both the pairing and the invariants take values in the field $K$ (or in $\Lambda$, if we take the insertions from $H^*_{\T,\mathrm{loc}}(\WmodG,\Lambda)$).

If $\WmodG$ is {\it projective} and we take all insertions in the non-localized equivariant cohomology $H^*_{\T}(\WmodG,\QQ)$, then the 
$\T$-equivariant invariants
may be defined without localization
and take values in the ring $\QQ[\lambda_1,\dots,\lambda_n]$. Upon specializing $\lambda_1=\dots=\lambda_n=0$ we recover the non-equivariant theory from the previous subsection.
\begin{Rmk} The properness of the moduli spaces over the affine quotient implies immediately that the evaluation maps are proper, so the push-forward $(ev_i)_*$
is well-defined for all targets. The invariant \eqref{equiv invariants} may then be defined as the pairing
\begin{equation}\label{push-forward by ev}
\left\lan \delta_1, (ev_1)_*\left ([\QmapWe]^{\mathrm{vir}}\cap \prod_{j=2}^k  ev_j^*(\delta_j)\prod_{j=1}^k\psi_j^{a_j}\right)\right \ran .
\end{equation}

\end{Rmk}

\subsection{Twisted theories} 
A $\G$-representation $E$ is called {\it convex} (respectively, {\it concave}) if the equivariant vector bundle $W\times E$ on $W$ is generated by 
$\G$-equivariant global sections (respectively, it has no non-zero $\G$-equivariant global sections).
Vector bundles $\underline{E}:= W^s\times_\G E$ on $\WmodG$  induced by representations will be called {\it homogeneous}.
We fix a $1$-dimensional torus ${\bf U}\cong \CC^*$, acting trivially on $W$ and by multiplication on $E$.

Each choice of a representation $E$ and of an invertible multiplicative ${\bf U}$-equivariant
characteristic class $c$ determines an $(E,c)$-twisted $\ke$-quasimap CohFT, see \S6.2 of \cite{CKM}. (We restrict here to bundles arising from representations for simplicity,
but general $\G$-equivariant bundles on $W$ may be considered.)
These generalize to
arbitray stability parameter $\ke$ the twisted Gromov-Witten CohFT's studied by Coates and Givental, \cite{CG}.  

It is shown in \cite{CKM}, and explained again in \S7.2.1 of \cite{CK0}, that when $W$ is smooth, the representation $E$ is convex, and the class $c$ is the (equivariant) Euler class, the resulting
{\it genus zero} twisted $\ke$-quasimap theory recovers (most of\footnote{In the present context,``most of" means that the primary insertions are pulled-back from the ambient $\WmodG$, and that the curve classes 
$\beta$ are those corresponding to $(W,\G,\theta)$.}) the $\ke$-quasimap theory of the zero locus $Z/\!\!/\G\subset \WmodG$ of a regular section of $\underline{E}$
(after specializing the invariants at $\lambda =0$, where $\lambda$ is the equivariant parameter for ${\bf U}$).
For example, all Calabi-Yau complete intersections in toric varieties are covered by this construction, but there are many more cases with indecomposable bundle $\underline{E}$
when the group $\G$ is non-abelian.

When twisting by the {\it inverse} Euler class of a concave representation $E$, the resulting theory coincides with the (untwisted) ${\bf U}$-equivariant theory of the total space of the bundle 
$\underline{E}$ over $\WmodG$,
viewed as the GIT quotient $(W\times E)/\!\!/\G$, in {\it all genera}, see Example 2.8.5 and \S7.2.2 in \cite{CK0}. 
If the base $\WmodG$ is projective, one can specialize 
the ($\beta\neq 0$) invariants
at $\lambda =0$.  
The typical examples we have in mind here occur as follows: take a projective Fano triple $(W,\G,\theta)$, with $W$ a vector space, and take $E=\det (W)^\vee$. These are {\it local Calabi-Yau
targets}, i.e., the total space of the canonical bundle of a Fano GIT quotient.

Unless specified otherwise, whenever we talk about twisting by $E$ in this paper, it should be understood as twisting by the Euler class.

\section{Genus zero theory of semi-positive targets}
Recall from \cite{CK0} that a triple $(W,\G,\theta)$ is called {\it semi-positive} if $$\beta(\det(T_W))\geq 0$$ for every 
$\theta$-effective class
$\beta\in {\mathrm {Eff}}(W,\G,\theta)$. 

The $\ke$-wall-crossing for genus zero invariants with descendant insertions at one point and any number of primary insertions is treated in detail in \cite{CK0} . 
In this section we recall first what those results say for semi-positive targets, then
we discuss the extension to the full genus zero descendant theories in that case. 

From now on, we will assume that $W$ has an action by a torus $\T$ satisfying the assumptions in \S\ref{equivariant} and will consider the $\T$-equivariant $\ke$-quasimap theories (the 
torus is allowed to be trivial if $\WmodG$ is projective). We write simply $H^*(\WmodG)$ for the appropriate $\T$-equivariant (localized) cohomology group.

In addition, some of the results in this section are stated for both untwisted and twisted theories of $\WmodG$. All arguments we give are identical whether the twisting is present or not. 
Hence we do not provide separate proofs, nor do we include the twisting in the notation for brackets, double brackets etc. 

\subsection{Summary of results from \cite{CK0}}

\subsubsection{Graph spaces, $J^\ke$-functions, and $S^\ke$-operators}
 Let $0+\leq\ke$ be a stability parameter. For each $k\geq 0$, we have the {\it graph space} $\QGraphe$, see \cite{CKM}, \S 7.2 and \cite{CK0}, \S 2.6. It is 
 the moduli space of genus zero, $k$-pointed, $\ke$-stable quasimaps whose domain curve contains
 an irreducible component which is a parametrized $\PP^1$. The $\CC^*$-action on the parametrized component lifts to an
 action on $\QGraphe$. The fixed loci for this action are described e.g., in \S4 of \cite{CK0} and their geometry has played an important role in 
 the study of genus zero $\ke$-wall-crossings in quasimap theory undertaken in \cite{CK0} (they will appear later in this paper, in the proof of Theorem \ref{Main Thm}).
 
The (big) $J^\ke$-function is defined as a formal sum over all $k\geq0$ and $\beta\in {\mathrm {Eff}}(W,\G,\theta)$
of localization residues over certain distinguished components of the $\CC^*$-fixed loci in graph spaces,
pushed-forward to $\WmodG$ by evaluation maps, see Definition 5.1.1 in \cite{CK0}.
It takes the form (for arbitrary targets)
\begin{equation}
J^\ke(q,t,z)=\one+\frac{t}{z}+\sum_{i=1}^s \gamma_i \sum _{\beta \neq 0, \beta(L_\theta)\leq 1/\ke}
q^\beta J^\ke_{i,\beta}(z)+ \sum_{i=1}^s\gamma_i\lla \frac{\gamma^i}{z(z-\psi)}\rra^\ke_{0,1}(t),
\end{equation}
with $z$ the generator of the $\CC^*$-equivariant cohomology of $\Spec(\CC)$ and $t=\sum_i t_{0i}\gamma_i \in H^*(\WmodG)$. 
The first three summands account for the missing unstable terms in the double bracket.

For $\ke=\infty$ the double sum disappears and we obtain Givental's big $J$-function in Gromov-Witten theory, with asymptotic expansion $\one +\frac{t}{z}+O(1/z^2)$.
For $\ke=0+$ we use the notation $I(q,t,z):=J^{0+}(q,t,z)$. 
For intermediate values $\ke\in(0,1]$, the double sum is a 
finite $q$-truncation of the infinite double sum in the $I$-function.

Define the {\it small} $J^\ke$-function by restriction to $t=0$,
\begin{equation}\label{small J}
J^\ke_{sm}(q,z):=J^\ke(q,0,z).
\end{equation}
In particular, we have the small $I$-function 
\begin{equation}\label{small I}
I_{sm}(q,z):=I(q,0,z)= \one+\sum_{i=1}^s \gamma_i \sum _{\beta \neq 0}q^\beta I_{i,\beta}(z).
\end{equation}
The double bracket vanishes at $t=0$, since the spaces $\mathrm{Q}^{0+}_{0,1}(\WmodG,\beta)$ are empty for all $\beta$. The terms $\sum_i\gamma_i I_{i,\beta}(z)$ are obtained from
residues on the {\it unpointed} graph spaces $QG^{0+}_{0,0,\beta}(\WmodG)$. These residues have been explicitly calculated in (almost) all interesting examples, giving closed formulas for
the small $I$-function.

Also define, for any $\gamma\in H^*(\WmodG)$,
\begin{equation}\label{S operator}
S^\ke_t(\gamma):= \sum_{i=1}^s\gamma_i \lla \frac{\gamma^i}{z-\psi},\gamma\rra^\ke_{0,2}(t).
\end{equation}
The convention  
$$\lan \frac{\gamma^i}{z-\psi},\gamma\ran^\ke_{0,2,0}=\lan\gamma^i,\gamma\ran$$ 
is made for the unstable term (corresponding to $m=0,\beta=0$) in $\lla ... \rra^\ke_{0,2}$. 
Directly from definitions, for every $i=1,\dots, s$,
\begin{equation}\label{derivatives}
z\frac{\partial}{\partial t_{0i}} J^\ke(t)= S^\ke_t(\gamma_i).
\end{equation}
It is shown in \cite{CK0} that the formula \eqref{S operator} defines a family (with parameter $t$) of symplectic transformations on the symplectic space 
$\mathcal{H}=H^*(\WmodG,\Lambda)\{\!\{z,z^{-1}\}\!\}$ appearing in Givental's formalism of Gromov-Witten theory, \cite{Givental-symplectic}.

The most general $\ke$-wall-crossing formula in genus zero applies to the operators
$S^\ke_t$, see Conjecture 6.1.1 and Theorem 7.3.1 in \cite {CK0}. We state here a special case, as formulated
in \cite{CK0}, Theorem 1.2.2. 

\begin{Thm}\label{mirror} Assume that the $\bT$-action on $\WmodG$ has isolated fixed points. Then for every $\ke\geq 0+$ we have
\begin{equation}
S^\ke_t(\one)=S^\infty_{\tau^\ke(t)}(\one)
\end{equation}
where the (invertible) transformation $\tau^\ke(t)$ is the following series of primary $\ke$-quasimap invariants
\begin{equation}\label{mirror map}
\begin{split}
&\tau^\ke(t)=\sum_{i=1}^s \gamma_i\lla \gamma^i,\one\rra(t)-\one=\\
&t+\sum_{i=1}^s\gamma_i\sum_{\beta\neq 0}\sum_{m\geq 0}\frac{q^\beta}{m!}\lan\gamma^i,\one,t,\dots,t\ran^\ke_{0,2+m,\beta}.
\end{split}
\end{equation}
Moreover, the same is true for $E$-twisted theories, where $E$ is a convex $\G$-representation.
\end{Thm}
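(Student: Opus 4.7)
The plan is to deduce this from the general genus-zero $\ke$-wall-crossing result of \cite{CK0} (Theorem 7.3.1), which yields an identity of the schematic form $S^\ke_t(\gamma) = S^\infty_{\tau^\ke(t)}(P^\ke_t(z)\gamma)$ for an invertible operator $P^\ke_t$ and a mirror map $\tau^\ke$, and then to show that the isolated-fixed-points hypothesis forces $P^\ke_t$ to act as the identity on $\one$.

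To produce and analyze this identity, I would perform $\CC^*$-equivariant virtual localization on the graph space $\QGraphe$ underlying the definition of $S^\ke$. Its $\CC^*$-fixed loci parametrize $\ke$-stable quasimaps whose parametrized $\PP^1$ maps constantly to some fixed point $p \in \WmodG^\T$, with ``legs'' attached at $0$ and $\infty$ that each contribute factors built from $J^\ke_{sm}$ evaluated at $p$ and from $S^\ke_t$-type residues. Comparing the resulting factorization with the analogous one for $\ke = \infty$, the difference lies entirely in how the two theories treat unstable two-pointed rational tails; this difference is precisely the correction series $\tau^\ke(t)$ displayed in \eqref{mirror map}. Semi-positivity ensures that the expansion $J^\ke_{sm} = J_0^\ke \one + J_1^\ke/z + O(1/z^2)$ with $J_0^\ke$ invertible is valid, and that the relevant $q$-adic series converge, so matching $1/z^0$ and $1/z^1$ coefficients on both sides of the localized identity produces the claim.

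The hard part will be the delicate bookkeeping of unstable conventions, in particular ensuring that the $(m,\beta)=(0,0)$ term in $\lla \gamma^i/(z-\psi), \one \rra^\ke_{0,2}$ is precisely balanced by the contribution of the constant map on the parametrized component, and verifying that $P^\ke_t(\one) = \one$: for a general insertion $\gamma$ a nontrivial symplectic transformation is needed, and it is specifically the unit insertion together with isolated fixed points that makes it collapse to the bare mirror map. For the $E$-twisted case the argument applies with no essential change, since convexity of $E$ makes the twisted obstruction theory and its equivariant Euler class compatible with the $\CC^*$-fixed stratification of the graph space, contributing only multiplicative factors already accounted for in the definitions of $J^\ke$ and $S^\ke$.
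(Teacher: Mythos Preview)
The paper does not prove this theorem here; it is quoted from \cite{CK0} as Theorem 1.2.2 there (the general wall-crossing being Theorem 7.3.1 of that paper), so there is no in-paper proof to compare against. Your high-level plan of deducing the statement from the general $S$-operator wall-crossing of \cite{CK0} is therefore in the right spirit.

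That said, your sketch has two genuine problems. First, you invoke semi-positivity to control the $1/z$-expansion of $J^\ke_{sm}$, but Theorem~\ref{mirror} makes \emph{no} positivity assumption; the paper says so explicitly immediately after the statement. The semi-positive expansion $J^\ke_{sm}=J_0^\ke\one+J_1^\ke/z+O(1/z^2)$ is irrelevant to this theorem and is only used later (Proposition~\ref{summary}) to simplify $\tau^\ke(t)$. Second, even granting that expansion, ``matching $1/z^0$ and $1/z^1$ coefficients'' cannot possibly yield the full equality $S^\ke_t(\one)=S^\infty_{\tau^\ke(t)}(\one)$, which is an identity of entire power series in $1/z$; matching two coefficients would at best pin down the mirror map, not the whole $S$-operator.

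The actual mechanism in \cite{CK0} is different. The isolated-fixed-points hypothesis is what makes the $\T$-localization recursion go through: one proves a polynomiality/recursion statement for certain generating series localized at each fixed point, and a uniqueness argument (Birkhoff-type, but applied to the full series rather than two coefficients) then forces the identification. The role of the unit insertion $\one$ is that the Birkhoff factor in the general formula is trivial on it for all targets, not just because of isolated fixed points; isolated fixed points are needed for the localization proof of the $S$-operator identity itself.
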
 
No positivity assumptions are made in Theorem \ref{mirror} on $(W,\G,\theta)$, or on $(W,E,\G,\theta)$ in the twisted case.
Of course, the statement is conjectured to hold irrespective of the existence of a torus action with isolated fixed points. As already explained, the part of the Theorem involving twisted theories 
covers such targets, since it concerns the genus zero $\ke$-quasimap theory of the zero locus of a regular section of the bundle $\underline{E}$ and this zero-locus is generally not $\T$ -invariant.

\subsubsection{The semi-positive case}
When $(W,\G,\theta)$ is semi-positive, no positive powers of $z$ appear in $J^\ke$. Define $J^\ke_0(q)$ and $J^\ke_1(q)$ from the asymptotic expansion
\begin{equation}\label{z-expansion} J^\ke(q,t,z)=J^\ke_0(q)\one + (t+J^\ke_1(q))\frac{1}{z}+O\left(\frac{1}{z^2}\right).
\end{equation}
In particular, we have
$q$-series $I_0(q)$ and $I_1(q)$, with
$$I_{sm}(q,z)=I_0(q)\one+I_1(q)\frac{1}{z}+O\left(\frac{1}{z^2}\right).$$
They satisfy $I_0(q)=1+O(q)\in\Lambda$ and $I_1\in {\mathfrak{m}}H^{\leq 2}(\WmodG,\Lambda)$. For $\ke >0$,
the coefficients $J^\ke_0(q)$ and $J^\ke_1(q)$ are polynomial truncations of the series $I_0$ and $ I_1$,
\begin{equation}\label{truncation}
J^\ke_0(q)=I_0(q)\; (\mathrm{mod}\;\mathfrak{a}_\ke),\;\;\;\;  J^\ke_1(q)=I_1(q)\; (\mathrm{mod}\;\mathfrak{a}_\ke),
\end{equation}
where $\mathfrak{a}_\ke$ is the ideal in the Novikov ring generated by $\{q^\beta\; |\: \beta(L_\theta) >\frac{1}{\ke}\}$.

The Proposition below collects the results for semi-positive targets from \cite{CK0}.

\begin{Prop}\label{summary} Let $(W,\G,\theta)$ be semi-positive and let $\ke\geq 0+$ arbitrary. Then

$(i)$ The $J$-function and the $S$-operator are related by
$$S^\ke_t(\one)=\frac{J^\ke(q,t,z)}{J^\ke_0(q)}.$$

$(ii)$ The transformation \eqref{mirror map} satisfies
$$\tau_\ke(t)=\frac{t+J^\ke_1(q)}{J^\ke_0(q)}.$$
In particular, 
\begin{equation}\label{mirror map semi}
\sum_{i=1}^s\gamma_i\sum_{\beta\neq 0} q^\beta\lan\gamma^i,\one\ran_{0,2,\beta}^\ke=\frac{J^\ke_1(q)}{J^\ke_0(q)}.
\end{equation}

$(iii)$ If the $\T$-action on $\WmodG$ has isolated fixed points, then
$$J^\infty\left(q, \frac{t+J^\ke_1(q)}{J^\ke_0(q)}, z\right)=\frac{J^\ke(q,t,z)}{J^\ke_0(q)}.$$
The same is true for $E$-twisted theories on $\WmodG$, where $E$ is a convex $\G$-representation such that $\beta(\det(T_W))-\beta(W\times \det(E))\geq 0$ for all $\theta$-effective $\beta$.

$(iv)$ Under the same assumption as in $(iii)$, for $n\geq 2$, $a\geq 0$, and $i_1,\dots, i_n \in \{1,\dots, s\}$, 
\begin{equation}\label{1-pt Taylor}
\begin{split}
&\;\;\;\;\;\;\;\;\; (J_0^\ke(q))^{n-2}\sum_{\beta\geq 0} q^\beta \langle \gamma_{i_1}\psi_1^{a},\gamma_{i_2},\dots,\gamma_{i_n}
\rangle^\ke_{0,n,\beta}=\\
 & \sum_{\beta \geq 0} q^\beta\sum_{m\geq 0} \frac{1}{m!}\left\lan \gamma_{i_1}\psi_1^{a},\gamma_{i_2},\dots,\gamma_{i_n}, \frac{J^\ke_1(q)}{J^\ke_0(q)},\dots, \frac{J^\ke_1(q)}{J^\ke_0(q)}
\right\ran_{0,n+m,\beta}^\infty .
\end{split}
\end{equation}
\end{Prop}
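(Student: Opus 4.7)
Parts (i) and (ii) are by now standard in the quasimap literature and I would restate the derivations from \cite{CK0}. For (i), the key point is that under semi-positivity, virtual dimension considerations on the graph spaces $QG^\ke_{0,k,\beta}(\WmodG)$ force the $\CC^*$-localization residues entering the definition of $J^\ke$ to factor as a pure-$q$ scalar series (assembled into $J^\ke_0(q)$ from the ``unpointed'' contributions over $0\in\PP^1$) times residues whose push-forward to $\WmodG$ is precisely a coefficient of $S^\ke_t(\one)$. This is carried out in \S5 of \cite{CK0}. For (ii), expand both sides of (i) in powers of $1/z$: using $1/(z-\psi) = \sum_a z^{-a-1}\psi^a$ and the unstable convention on $\lla \cdot, \cdot\rra^\ke_{0,2,0}$, the coefficient of $1/z$ on the left is exactly $\tau^\ke(t)$; by \eqref{z-expansion} the coefficient on the right is $(t+J^\ke_1)/J^\ke_0$. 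Setting $t=0$ then gives \eqref{mirror map semi}.

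For (iii), the plan is to combine Theorem \ref{mirror} with part (i). Theorem \ref{mirror} gives $S^\ke_t(\one) = S^\infty_{\tau^\ke(t)}(\one)$, and applying (i) at both $\ke$ and $\infty$ yields $J^\ke(q,t,z)/J^\ke_0(q) = J^\infty(q,\tau^\ke(t),z)/J^\infty_0(q) = J^\infty(q,\tau^\ke(t),z)$, where $J^\infty_0(q) = 1$ because the small Gromov-Witten $J$-function begins as $\one + O(1/z^2)$ at $t=0$. Substituting $\tau^\ke(t) = (t+J^\ke_1)/J^\ke_0$ from (ii) gives the stated identity. The twisted version is parallel: it uses the twisted case of Theorem \ref{mirror} (also from \cite{CK0}) and relies on the hypothesis $\beta(\det T_W) \geq \beta(W\times \det E)$ to guarantee that the twisted $J^\ke$ still has an expansion of the form \eqref{z-expansion}.

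For (iv), the plan is to differentiate (iii) exactly $n-1$ times in the flat coordinates $t_{0i_2},\ldots,t_{0i_n}$. On the right, the chain rule contributes a factor $(J^\ke_0)^{1-n}$ through $\partial_{t_{0i}}\tau^\ke_j = \delta_{ij}/J^\ke_0$. Iterating $z\partial_{t_{0i}} J^\ke = S^\ke_t(\gamma_i)$ from \eqref{derivatives}, and likewise on the Gromov-Witten side, rewrites both sides as generating series for $n$-point double brackets with one insertion packaged in $1/(z-\psi)$. Restricting to $t=0$ converts $\tau^\ke(t)$ into $J^\ke_1/J^\ke_0$; pairing both sides against $\gamma_{i_1}$ with respect to the Poincar\'e metric turns $\sum_j \gamma_j \lla \gamma^j\psi^a,\cdots\rra$ into $\lla \gamma_{i_1}\psi^a,\cdots\rra$; and extracting the coefficient of $z^{-a-2}$ produces \eqref{1-pt Taylor} after clearing the residual factors of $J^\ke_0$.

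The main obstacle is really (i), which demands an honest geometric analysis of $\CC^*$-fixed loci on the graph spaces $QG^\ke_{0,k,\beta}(\WmodG)$ and a comparison of their virtual classes with those of ordinary quasimap moduli. Once (i) is in hand, (ii)--(iv) are formal consequences, and the only remaining care needed is in tracking the unstable-term conventions and applying the chain rule consistently.
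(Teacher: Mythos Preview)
Your proposal is correct and follows essentially the same approach as the paper: parts (i) and (ii) are deferred to \cite{CK0} (Corollary 5.5.3), part (iii) is derived by combining (i), (ii), and Theorem \ref{mirror} exactly as you describe, and part (iv) is obtained by matching Taylor coefficients in (iii), which is precisely your differentiation-and-specialization argument. The only minor quibble is the power of $z$ you extract at the end (it should be $z^{-a-1}$ once you have passed to the $S$-operator form with $\tfrac{1}{z-\psi}$, or equivalently $z^{-a-2}$ if you stay with the $J$-function's $\tfrac{1}{z(z-\psi)}$), but this is a bookkeeping detail and does not affect the argument.
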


\begin{proof} For parts $(i)$ and $(ii)$, see \cite{CK0}, Corollary 5.5.3. Part $(iii)$ follows from $(i)$, $(ii)$, and Theorem \ref{mirror}. Part $(iv)$ is obtained by matching
Taylor coefficients in $(iii)$, see Corollary 1.5.2 in \cite{CK0}. \end{proof}
\begin{Rmk}\label{degree one}
Equation \eqref{1-pt Taylor} proves Theorem \ref{genus zero intro} in the case when only one of the insertions is descendant and the rest are primaries. 
In view of $(iii)$, we may extend it to $n=1$ by interpreting the left-hand side as the coefficient of $\gamma^{i_1}/z^{a+2}$ in $J^\ke_{sm}$. With this interpretation we may therefore also extend
the genus zero potential $F_0^\ke$ to include the missing terms in $q^\beta t_{ji}$ for $\beta(L_\theta)\leq 1/\ke$. The genus zero case of \eqref{GW to epsilon} (and Theorem \ref{genus zero intro})
will then be viewed as a matching of potentials up to an additive constant. 
\end{Rmk}
\begin{Rmk}\label{unit} Part $(i)$ contains the statement
\begin{equation}\label{string equation 1} 
\lla \frac{\gamma}{z-\psi},\delta,J^\ke_0(q)\one\rra^\ke_{0,3} (t)=\frac{1}{z}\lla \frac{\gamma}{z-\psi},\delta\rra^\ke_{0,2} (t),
\end{equation}
which says that $J^\ke_0(q)\one$ satisfies the string equation for one-point descendants in $\ke$-quasimap theory of a semi-positive target. 
In particular, the same class is the unit for the $\ke$-quantum multiplication (cf. Corollary 5.5.4 in \cite{CK0}). 
\end{Rmk}
\begin{Rmk} As explained in \cite{CK0}, Remark 6.2.2, parts $(iii)$ and $(iv)$ of Proposition \ref{summary} generalize the genus zero toric mirror theorems of \cite{Givental}.

\end{Rmk}
 
 \subsection{Two descendant insertions}
 
 Denote by $[\Delta]$ the cohomology class of the diagonal 
 $$[\Delta] =\sum_{i=1}^s\gamma_i\ot \gamma^i\in H^*(\WmodG)\ot H^*(\WmodG).$$
Let $z,w$ be formal variables and define

\begin{equation} \label{V operator}
V ^{\ke}_t (z, w)  :=\sum _{i,j=1}^s \gamma _i \ot \gamma _j \lla \frac{\gamma ^i}{z-\psi } , \frac{\gamma ^j}{w-\psi }\rra^\ke_{0,2}(t) .
\end{equation}
The convention $$\sum_{i,j=1}^s \gamma_i\ot\gamma_j\lan \frac{\gamma ^i}{z-\psi } , \frac{\gamma ^j}{w-\psi }\ran^\ke_{0,2,0}=\frac{[\Delta]}{z+w}$$
is made for the unstable term in the double bracket.
We have
$$V ^{\ke}_t (z, w) - \frac{[\Delta]}{z+w}\in H^*(\WmodG)\ot H^*(\WmodG)[[q, \{t_{0j}\}, 1/z, 1/w]]. $$

\begin{Thm}\label{Two_Pointed} For arbitrary GIT targets $\WmodG$,

    \begin{equation}\label{V and S}  V^{\ke}_t = \frac{S^{\ke} _t (z)\ot S^{\ke} _t(w) ([\Delta] )}{z+w}. \end{equation}
    
\end{Thm}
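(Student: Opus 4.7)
The plan is to prove the identity by (a) extracting from the genus-zero TRR a first-order PDE in the $\{t_{0i}\}$ directions that both sides satisfy, and (b) matching at an initial point. I introduce the bilinear pairing $\tilde S(z)(\alpha,\gamma):=\lla \alpha/(z-\psi),\gamma\rra^\ke_{0,2}(t)$ so that $S^\ke_t(z)(\gamma)=\sum_k\gamma_k\tilde S(z)(\gamma^k,\gamma)$, and likewise $V^\ke_t(z,w)(\alpha,\beta):=\lla \alpha/(z-\psi),\beta/(w-\psi)\rra^\ke_{0,2}(t)$. After unwinding the dual bases, \eqref{V and S} is equivalent to the scalar identity
\[
(z+w)\,V^\ke_t(z,w)(\alpha,\beta) \;=\; \sum_{j}\tilde S(z)(\alpha,\gamma_j)\,\tilde S(w)(\beta,\gamma^j),\qquad \alpha,\beta\in H^*(\WmodG),
\]
with the stated unstable conventions absorbed into the double brackets.

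For the PDE, I apply Corollary \ref{TRR} to the three-point double bracket $\lla\alpha/(z-\psi_1),\beta/(w-\psi_2),\gamma_i\rra^\ke_{0,3}$. Expanding $1/(z-\psi_1)$ in powers of $\psi_1$ and applying TRR to each $\psi_1^a$-term with $a\geq 1$, then resumming and absorbing the surviving $a=0$ piece via the $(m,\beta')=(0,0)$ convention on $\tilde S(z)$, produces
\[
z\,\partial_{t_{0i}}V^\ke_t(z,w)(\alpha,\beta) \;=\; \sum_j \tilde S(z)(\alpha,\gamma_j)\,\partial_{t_{0i}}\tilde S(w)(\beta,\gamma^j),
\]
using $\lla\gamma^j,\beta/(w-\psi),\gamma_i\rra=\partial_{t_{0i}}\tilde S(w)(\beta,\gamma^j)$. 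The symmetric identity with $(z,\alpha)\leftrightarrow(w,\beta)$ comes from TRR applied instead to $1/(w-\psi_2)$. Adding the two and using the dual-basis symmetry $\sum_j\gamma_j\otimes\gamma^j=\sum_j\gamma^j\otimes\gamma_j$ to realign one summand, the right-hand side collapses to $\partial_{t_{0i}}\bigl(\sum_j \tilde S(z)(\alpha,\gamma_j)\tilde S(w)(\beta,\gamma^j)\bigr)$, and so the difference $T := (z+w)V^\ke_t(\alpha,\beta)-\sum_j\tilde S(z)(\alpha,\gamma_j)\tilde S(w)(\beta,\gamma^j)$ satisfies $\partial_{t_{0i}}T=0$ for every $i$. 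Hence $T$ is independent of $t$.

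It then suffices to check $T\equiv 0$ at one value of $t$. I verify the Novikov constant term ($q^0$) by direct computation: at zero curve class $\rQ^\ke_{0,2+m,0}\cong\overline M_{0,2+m}\times\WmodG$ carries the trivial virtual class, and the elementary integrals $\int_{\overline M_{0,2+m}}\psi_1^a\psi_2^b=\binom{m-1}{a}\delta_{a+b,m-1}$ and $\int_{\overline M_{0,2+m}}\psi_1^a=\delta_{a,m-1}$ identify both sides of the scalar identity with $\int_{\WmodG}\alpha\beta\,\exp\bigl((z+w)t/(zw)\bigr)$. For contributions $q^{\beta'}$ with $\beta'\neq 0$, $t$-independence reduces the statement to a pure identity among 2-point quasimap invariants on $\rQ^\ke_{0,2,\beta'}$; this is the step I expect to be the main obstacle, since Lemma \ref{psi comparison} requires $2g-2+k>0$ and so does not apply for $k=2$. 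My plan for this step is to induct on the Novikov degree of $\beta'$, matching $z^{-a}w^{-b}$-coefficients on both sides via TRR and the boundary splitting formula \eqref{bdry1}, exploiting that the excess factor $(\psi_1+\psi_2)\cap[\rQ^\ke_{0,2,\beta'}]^{\mathrm{vir}}$ should split as a sum over the boundary divisors $D^\ke_{\beta_1,\beta_2}$, whose contributions supply precisely the lower-Novikov pieces of the right-hand side.
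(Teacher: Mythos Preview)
Your route is genuinely different from the paper's. The paper does \emph{not} argue via TRR and an initial condition; instead it builds an auxiliary ``$\widetilde{A_2}$-graph space'' (quasimaps whose domain carries two parametrized $\PP^1$'s, modeled on the exceptional curve in the resolution of the $A_2$ surface singularity), equips it with a two-torus $\bS$-action, and computes a generating series $R^\ke$ by $\bS$-localization. The localization produces the factorization $R^\ke=(z+w)^2\bigl((S_t^\ke)^\star(-z)\otimes(S_t^\ke)^\star(-w)\bigr)(V_t^\ke)$, and since $R^\ke$ is manifestly polynomial in $z,w$ (it is defined without inverting the $\bS$-parameters), a simple pole-cancellation forces $R^\ke=(z+w)[\Delta]$; unitarity of $S^\ke$ then yields \eqref{V and S}. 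So the paper trades your $t$-PDE and initial-value check for a new moduli construction plus a polynomiality argument that handles all Novikov degrees at once.

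Your step (a) is fine: the TRR manipulation giving $\partial_{t_{0i}}T=0$ is correct and the bookkeeping with the unstable $(m,\beta)=(0,0)$ convention is handled properly. Your $q^0$ verification in step (b) is also correct. The gap is exactly where you flag it: to close the argument you must show $T|_{t=0}=0$ in every Novikov degree $\beta'\neq 0$, and for this you need
\[
(\psi_1+\psi_2)\cap[\rQ^\ke_{0,2,\beta'}]^{\mathrm{vir}}=\sum_{\beta_1+\beta_2=\beta'}[D^\ke_{(\{1\},\beta_1),(\{2\},\beta_2)}]^{\mathrm{vir}}.
\]
You assert this ``should'' hold but do not prove it, and Lemma \ref{psi comparison} is unavailable since $2g-2+k=0$. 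The relation is in fact provable: one shows the Cartier divisor identity $\psi_1+\psi_2=[D_{1|2}]$ on the Artin stack $\mathfrak{M}_{0,2}$ (the section of $(x_1^*\omega_\pi)\otimes(x_2^*\omega_\pi)\otimes\cO(-D_{1|2})$ coming from the residue pairing of $\omega_\pi(x_1+x_2)$ is nowhere vanishing, including across the boundary component where both markings lie on the same side), and then pulls back along $\rQ^\ke_{0,2,\beta'}\to\mathfrak{M}_{0,2}$ and invokes the same virtual-class functoriality that underlies \eqref{bdry1}--\eqref{bdry2}. But this is real work that your proposal omits, and the paper's remark that the $\ke=\infty$ proof via WDVV and the string equation does not immediately carry over is precisely a warning that relations of this type need independent justification in quasimap theory. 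If you supply that missing divisor computation on $\mathfrak{M}_{0,2}$, your approach goes through and is more elementary than the paper's; as written, it is incomplete.
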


\begin{Rmk}\label{genus zero 2 pointed} Combining Theorem \ref{Two_Pointed} with Proposition \ref{summary}$(iii)$ and \eqref{derivatives} proves
Theorem \ref{genus zero intro} in the case when two of the insertions are descendant and the other insertions are primary.
A very special case (twisted ($0+$)-theory of $\PP^n$, with two descendant and no primary insertions) has also been proved by different methods in \cite{Zinger}.
\end{Rmk}

In Gromov-Witten theory ($\ke=\infty$), the statement in Theorem \ref{Two_Pointed} is well-known and its proof follows immediately from the WDVV and string equations, see
\cite{Givental-elliptic}, item $(4)$ on p.117. In quasimap theory the string equation is apriori missing, so this proof will not work. Instead, we provide a
localization argument which is a variant  ``with two equivariant parameters" of
the proofs of Proposition 5.3.1 and Theorem 5.4.1 in \cite{CK0}. 

Before going into details, we note first that the usual argument shows that \eqref{V and S} and the string equation for invariants with two descendant insertions are equivalent,
in the presence of WDVV {\it and} the string equation for invariants with at most one descendant insertion.
Hence, using \eqref{string equation 1} and Theorem \ref{Two_Pointed} we obtain again that, in the semi-positive case, $J_0 ^\ke \one$ satisfies the string equation for two-point descendants:

\begin{Cor}\label{unit two-pointed} For arbitrary  semi-positive $(W,\G,\theta)$ 
, 
\begin{equation}\label{string 2} \lla J_0 ^\ke \one , \frac{\gamma }{z-\psi }, \frac{\delta}{w-\psi } \rra ^\ke _{0, 3} (t)=
\frac{z+w}{zw} \lla  \frac{\gamma }{z-\psi }, \frac{\delta}{w-\psi } \rra _{0, 2} ^\ke (t) \end{equation}
where the unstable term in the right-hand side double bracket is defined to be $\frac{\lan\gamma, \delta\ran}{z+w}$.
\end{Cor}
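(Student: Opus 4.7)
\emph{Plan.} Following the strategy sketched in the paragraph preceding the statement, I would derive the two-descendant string equation \eqref{string 2} as a purely formal consequence of three already-established inputs: (i) the genus-zero WDVV equation, which is part of the CohFT structure (Proposition \ref{CohFT}) and hence holds for any GIT target; (ii) the one-descendant string equation \eqref{string equation 1}; and (iii) Theorem \ref{Two_Pointed}, i.e.\ \eqref{V and S}. No geometric input beyond what is recorded in the excerpt is required.

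\emph{Steps.} I would apply WDVV to the four insertions $A=\frac{\gamma}{z-\psi}$, $B=\frac{\delta}{w-\psi}$, $C=D=J^\ke_0\one$:
\begin{equation*}
\sum_i \lla A,B,\gamma_i\rra^\ke_{0,3}\lla \gamma^i,C,D\rra^\ke_{0,3}=\sum_i \lla A,C,\gamma_i\rra^\ke_{0,3}\lla \gamma^i,B,D\rra^\ke_{0,3}.
\end{equation*}
For the left-hand side, Remark \ref{unit} tells us $J^\ke_0\one$ is the $\ke$-quantum unit, so $\lla \gamma^i,J^\ke_0\one,J^\ke_0\one\rra^\ke_{0,3}=J^\ke_0\lan \gamma^i,\one\ran$; using $\one=\sum_i\lan \gamma^i,\one\ran\gamma_i$ and $\Lambda$-linearity, the $i$-sum collapses to $\lla J^\ke_0\one,\frac{\gamma}{z-\psi},\frac{\delta}{w-\psi}\rra^\ke_{0,3}$, which is the left-hand side of \eqref{string 2}. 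For the right-hand side, apply \eqref{string equation 1} to each factor (in the second, after using the $S_3$-equivariance of $\Omega^\ke_{0,3}$ to place the descendant at marking $1$):
\begin{equation*}
\lla \tfrac{\gamma}{z-\psi},J^\ke_0\one,\gamma_i\rra^\ke_{0,3}=\tfrac{1}{z}\lla \tfrac{\gamma}{z-\psi},\gamma_i\rra^\ke_{0,2},\qquad \lla \gamma^i,\tfrac{\delta}{w-\psi},J^\ke_0\one\rra^\ke_{0,3}=\tfrac{1}{w}\lla \gamma^i,\tfrac{\delta}{w-\psi}\rra^\ke_{0,2}.
\end{equation*}
The right-hand side of WDVV becomes $\frac{1}{zw}\sum_i\lla \frac{\gamma}{z-\psi},\gamma_i\rra^\ke_{0,2}\lla \gamma^i,\frac{\delta}{w-\psi}\rra^\ke_{0,2}$, and Theorem \ref{Two_Pointed} identifies this $i$-sum (which is exactly the coefficient extracted from $S^\ke_t(z)\otimes S^\ke_t(w)([\Delta])$ via the dual bases) with $(z+w)\lla \frac{\gamma}{z-\psi},\frac{\delta}{w-\psi}\rra^\ke_{0,2}$. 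Equating the two sides of WDVV yields \eqref{string 2}.

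\emph{Main obstacle.} There is no serious obstacle; the calculation is a short, purely formal combination of the three cited inputs. The only piece of bookkeeping is to verify that the unstable conventions match: the right-hand side of \eqref{string 2} is declared to carry the unstable term $\frac{\lan\gamma,\delta\ran}{z+w}$, and one must check this is reproduced by the above chain of identities. This is automatic, because the convention $\lan \frac{\gamma^i}{z-\psi},\gamma\ran^\ke_{0,2,0}=\lan \gamma^i,\gamma\ran$ in the definition of $S^\ke_t$ propagates through $V^\ke_t=\frac{S^\ke_t(z)\otimes S^\ke_t(w)([\Delta])}{z+w}$ to produce exactly the $\frac{[\Delta]}{z+w}$ unstable part of $V^\ke_t$ recorded just before Theorem \ref{Two_Pointed}.
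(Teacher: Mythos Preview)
Your proof is correct and follows exactly the approach the paper indicates: the paper states (without details) that ``the usual argument shows that \eqref{V and S} and the string equation for invariants with two descendant insertions are equivalent, in the presence of WDVV and the string equation for invariants with at most one descendant insertion,'' and your WDVV computation with $A=\frac{\gamma}{z-\psi}$, $B=\frac{\delta}{w-\psi}$, $C=D=J_0^\ke\one$ is precisely that usual argument spelled out.
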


\subsubsection{The $\Atwo$-graph space}

We will require a version of graph spaces for which the domain curve has {\it two} parametrized components. To construct it, consider the crepant resolution of
the $A_2$-singularity 
$$\pi:\Atwo\lra Y_0=\CC^2/\ZZ_3.$$  
$\Atwo$ is a smooth quasi-projective surface and the
exceptional set of $\pi$ is a nodal curve $D=E_1\cup E_2$. The two components are rational $(-2)$-curves meeting in a point. 

Since $\Atwo$ is identified with the $\ZZ _3$-Hilbert scheme of $\CC ^2$, it has a natural action by a two-dimensional torus $\bS\cong(\CC^*)^2$, induced from the
standard $\bS$-action on $\CC ^2$. There are exactly three $\bS$-fixed points in $\Atwo$: the node on $D$, and one additional fixed point on each component of $D$. 
We denote the fixed points $p_0$, $p_n$, $p_\infty$, with $p_0\in E_1$, $p_n=E_1\cap E_2$, and $p_\infty\in E_2$. There are two compact  one-dimensional $\bS$-orbit closures, namely
$E_1$ and $E_2$, and two noncompact ones, $D_0$ passing through $p_0$ and $D_\infty$ passing through $p_\infty$.

Let $H^*_{\bS} (pt ) = \QQ [s_1, s_2] $, so that $s_1, s_2$ are the equivariant parameters. We denote by $z$, respectively by $w$, the $\bS$-weights on $E_1$, respectively on $E_2$ at the
node $p_n$. We have $z=2s_1-s_2$ and $w=2s_2-s_1$. The weights at the other fixed points are $-z$ on $E_1$ and $2z+w$ on $D_0$ at $p_0$, and $-w$ on $E_2$ and $2w+z$ on $D_\infty$
at $p_\infty$. Note that the sum at each fixed point is $z+w=s_1+s_2$, reflecting the fact that $\Atwo$ has a holomorphic symplectic form induced by the standard form on $\CC^2$.

Any nonconstant map from a projective curve to $\Atwo$ must factor through $D$.
Fix $\beta\in \Hom_\ZZ(\Pic^\G(W),\ZZ)$ and consider the moduli stack 
\[ \mathfrak{M}_{0, k}([W/\G]\times \Atwo, (\beta, 1,1)) \]
parametrizing maps from $k$-pointed, genus zero curves to $[W/\G]\times \Atwo$, of class $(\beta, 1,1)$.
A geometric point in this stack is a tuple
$$ ((C,x_1,\dots, x_k), f, \varphi),$$
with $(C,x_1,\dots, x_k)$ a prestable curve of genus zero, $f:C\lra [W/\G]$ a map of class $\beta$, and
$\varphi: C\lra \Atwo$ a regular map such that $\varphi_*[C]=[D]$.
In particular, the domain curve must have two distinguished irreducible components $C_1$ and $C_2$ such that $\varphi$ maps $C_i$ isomorphically onto
$E_i$ and contracts all other components of $C$.

Next, for each $0+\leq\ke\leq \infty$ we introduce $\ke$-stability in almost the same way as for the usual graph spaces, see \cite{CKM}, Definition 7.2.1 and
\cite{CK0}, \S2.6. The only difference is that the ampleness part of the stability condition does not involve {\it either} of the two distinguished components in the domain.
Precisely, we require that
$$\omega _{C'} (\sum z_i +\sum y_j) \otimes f^*{L}_{\theta} ^{\otimes \ke} $$
is ample on $C'$, where $C'$ is the closure of ${C\setminus (C_1\cup C_2)}$, $z_i$ are the markings on $C'$ and $y_j$ are the nodes $C'\cap (C_1\cup C_2)$.

Imposing the $\ke$-stability condition determines an open substack
\begin{equation}\label{A2-graph space} \mathrm{Q}^{\ke}_{0, k} (X, \beta;  \Atwo) \end{equation}
of $\mathfrak{M}_{0, k}([W/\G]\times \Atwo, (\beta, 1,1))$, which we will call the $\Atwo$-{\it graph space}.  
The $\bS$-action on $\Atwo$ induces a $\bS$-action on $\mathrm{Q}^{\ke}_{0, k} (\WmodG, \beta;  \Atwo)$. Recall that we also have a $\T$-action on $W$; it lifts as well to an action on
the $\Atwo$-graph space. These two actions commute, so we have a $\T\times\bS$-action.

\begin{Prop}
The moduli space $\mathrm{Q}^{\ke}_{0, k} (\WmodG, \beta;  \Atwo)$ has the following properties.
\begin{enumerate}
\item It is defined for all $\ke\geq 0+$, $k\geq 0$, and $\beta\in \mathrm{Eff}(W,\G,\theta)$.
\item It is a separated Deligne-Mumford stack of finite type.
\item It has a natural proper map to the affine quotient $W/_{\mathrm {aff}}\G$. In particular, it is proper when $\WmodG$ is projective.
\item It carries a natural $\T\times \bS$-equivariant perfect obstruction theory.
\end{enumerate}
\end{Prop}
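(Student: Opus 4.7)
The plan is to adapt the proofs of the corresponding statements for the usual one-parameter graph space $\QGraphe$ given in \cite{CKM} (\S7.2) and \cite{CK0} (\S2.6). The essential point is that a pair $(f,\varphi)$ in $\mathrm{Q}^{\ke}_{0, k} (\WmodG, \beta;  \Atwo)$ is determined by a quasimap $f$ to $[W/\G]$ on the complement $C'$ of the two distinguished components $C_1\cup C_2$ together with rigid combinatorial data (the positions of $C_1,C_2$, their node, and the attaching points of $C'$), so the geometry of the moduli stack is governed by the analogous quasimap theory for $C'$. Throughout, one works relative to the (smooth) Artin stack $\Y$ parametrizing $k$-pointed, genus-zero prestable curves equipped with a contraction to the nodal curve $D=E_1\cup E_2\subset\Atwo$ of bidegree $(1,1)$.

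For parts (1) and (2), $\ke$-stability on the non-parametrized subcurve $C'$ is an open condition on the Kontsevich stack $\fM_{0,k}([W/\G]\times \Atwo,(\beta,1,1))$ by the identical argument as in \cite{CKM} Proposition 7.1.6 for $\QGraphe$; this gives (1) and shows that the $\Atwo$-graph space is an open substack. Deligne--Mumford-ness and finite type follow from the boundedness results of \cite{CKM} Proposition 3.3.3 applied to $f|_{C'}$, noting that the degree-$(1,1)$ constraint fixes the $\Atwo$-component of the class. Separatedness is proved by the same valuative argument as for $\QmapWe$, combined with the observation that on the special fiber the two parametrized components, their node, and the attaching points to $C'$ are uniquely pinned down by the contraction $\varphi$.

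For part (3), the proper map to $W/_{\mathrm{aff}}\G$ is established via the valuative criterion, mimicking the proof for $\QGraphe$ in \cite{CKM} \S7.2. Given a family over the generic point of a DVR, one first contracts the two distinguished components $C_1\cup C_2$ to obtain an $\ke$-prestable family of quasimaps to $\WmodG$ and extracts its unique limit using the properness of $\QmapWe\to W/_{\mathrm{aff}}\G$. One then reconstructs the parametrized components and the contraction $\varphi$ in the central fiber: the degree-one condition on each $E_i$ together with the fact that every limiting map to $\Atwo$ must factor through $D$ forces the existence of (and rigidifies) a unique pair of components mapping isomorphically onto $E_1$ and $E_2$, possibly after introducing rational tails disjoint from the parametrized locus. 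This is where the main technical work lies: one must rule out limits in which an entire parametrized component collapses or in which extra degree migrates onto $E_1\cup E_2$, which is handled by the standard bubbling/normalization argument exactly as in the one-component graph-space case.

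For part (4), the perfect obstruction theory is constructed relative to $\Y$ in the same way as in \cite{CKM} \S4.5. Let $\pi:\mathcal{C}\to \mathrm{Q}^{\ke}_{0, k} (\WmodG, \beta;  \Atwo)$ denote the universal curve and $F:\mathcal{C}\to[W/\G]$ the universal map; then $(R\pi_*F^*T_{[W/\G]/B\G})^\vee$ defines a two-term relative perfect obstruction theory over $\Y$, using the lci hypothesis on $W$ to guarantee perfectness when $\ke\leq 1$. Since $\Y$ is smooth this yields an absolute perfect obstruction theory in the standard way. The $\T\times\bS$-equivariance is manifest from the functoriality of the construction and the fact that both torus actions lift canonically to $\mathcal{C}$ and $F$. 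The main obstacle in the whole proposition, as noted above, is the careful verification of properness in part (3); once this is in place, the remaining statements are essentially formal consequences of the corresponding properties of $\QmapWe$ and $\QGraphe$.
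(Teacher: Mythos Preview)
Your treatment of parts (1)--(3) is correct and essentially matches the paper's argument; in particular, the key input for properness---that any nonconstant map from a projective curve to $\Atwo$ factors through the exceptional curve $D=E_1\cup E_2$---is exactly the observation the paper singles out.

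There is, however, a genuine gap in your construction of the perfect obstruction theory in part (4). You assert that the Artin stack $\Y$ of prestable pointed genus-zero curves equipped with a degree-$(1,1)$ contraction to $D\subset\Atwo$ is smooth, and you build the absolute obstruction theory by combining the relative theory $(R\pi_*F^*T_{[W/\G]})^\vee$ over $\Y$ with smoothness of the base. But $\Y$ is \emph{not} smooth. The components $E_1,E_2$ are $(-2)$-curves in $\Atwo$, so for the distinguished domain component $C_i$ mapping isomorphically onto $E_i$ one computes
\[
H^1(C_i,\varphi^*T_{\Atwo})\cong H^1(\PP^1,\cO(-2))\cong\CC\neq 0.
\]
These nonzero obstruction spaces prevent $\Y$ from being smooth, and they are not an artifact: in the paper's subsequent $\bS$-localization computation they produce the factor $(z+w)^2$ in the inverse Euler class of the virtual normal bundle. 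If instead you reinterpret $\Y$ as a purely combinatorial gadget (just recording which components are distinguished) to force smoothness, the resulting obstruction theory loses the $\Atwo$-direction entirely and in particular carries no nontrivial $\bS$-equivariance, so the virtual class is wrong for the intended application.

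The paper avoids this by working relative to the genuinely smooth stack $\fM_{0,k}$ of prestable curves with \emph{no} map data, and taking the relative obstruction theory to be the \emph{direct sum} of the relative theories for quasimaps to $[W/\G]$ and for maps to $\Atwo$, i.e.\ the obstruction theory for maps to the product $[W/\G]\times\Atwo$. The $\T$-equivariance then comes from the first summand and the $\bS$-equivariance from the second; the absolute perfect obstruction theory is obtained from the relative one via the usual distinguished triangle. You should reorganize part (4) along these lines.
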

\begin{proof} Part $(1)$ is obvious. Parts $(2)-(4)$ follow in a straightforward manner using the arguments in \cite{CKM}. 

For the properness in part $(3)$ we use in addition that 
$$(f,\varphi):C\lra [W/\G]\times \Atwo$$ factors through $[W/\G]\times D$.

In part $(4)$, the relative obstruction theory over the smooth stack $\mathfrak{M}_{0,k}$ of prestable curves is the direct sum of the relative obstruction theories for quasimaps to $[W/\G]$
and for maps to $\Atwo$. The $\T$-equivariance comes from the first summand, while the $\bS$-equivariance comes from the second summand. 
The absolute obstruction theory is obtained as usual from the relative one via a distinguished triangle in the derived category.
\end{proof}

\subsubsection{Proof of Theorem \ref{Two_Pointed}}

The $\Atwo$-graph space comes with evaluation maps
$$\tilde{ev}_i : \mathrm{Q}^{\ke}_{0, k} (\WmodG, \beta;  \Atwo)\lra \WmodG\times\Atwo.$$ 
We write $\delta v$ for the $(\T\times \bS)$-equivariant cohomology class $\delta\ot v\in H^*(\WmodG)\ot H^*_{\bS}(\Atwo)$
and simply $\delta$ for $\delta\ot 1$. (Recall that $H^*(\WmodG)$ denotes $\T$-equivariant cohomology, localized if needed, but we supressed $\T$ from the notation.)
Define $(\ke, \Atwo)$-double brackets by
\begin{align*}
& \lla \delta_1 v_1,\dots ,\delta_r v_r\rra_{0,r}^{(\ke, \Atwo)}(t)=\\ 
&\sum _{\beta, k\geq 0 }\frac{q^\beta}{k!}
\int_{[\mathrm{Q}^{\ke}_{0, r+k} (\WmodG, \beta;  \Atwo)]^{\mathrm{vir}}}\prod_{l=1}^r\tilde{ev}_l^*(\delta _l v_l)
\prod_{m=r+1}^{r+k}\tilde{ev}_m^*(t).\end{align*}
The integral in the above formula is understood as $\T\times\bS$-equivariant push-forward to a point. By properness in the $\Atwo$ direction, the integrals are well defined {\it without}
localization for the $\bS$-action, so the double bracket has no poles at the equivariant parameters for $\bS$, i.e., it takes values in 
$$\Lambda[[\{t_{0j}\}]][[s_1,s_2]]=\Lambda[[\{t_{0j}\}]][[z,w]].$$

We also use the notation $D_0,D_\infty\in H^*_{\bS}(\Atwo,\QQ)$ for the $\bS$-equivariant divisor classes of the two noncompact one-dimensional $\bS$-orbits in $\Atwo$ 
from the previous subsection.
Their restrictions at the fixed points on $\Atwo$ are
$$D_0|_{p_0}=-z,\; D_0|_{p_n}=D_0|_{p_\infty}=0,\;\;\; D_\infty |_{p_0}=D_\infty |_{p_n}=0,\; D_\infty |_{p_\infty}=-w .$$ 
Now consider the generating series
\begin{equation*}
R^\ke : =  \sum_{i,j=1}^s \gamma _i \ot \gamma _j \lla \gamma ^i D_0, \gamma^j D_\infty \rra_{0,2}^{(\ke, \Atwo)}(t).
\end{equation*}
By definition, it is an element of $H^*(\WmodG)\ot H^*(\WmodG)[[q, \{t_{0j}\}, z, w]]$. The coefficient of each monomial $q^\beta t_{01}^{\alpha_1}\dots t_{0s}^{\alpha_s}$ is 
{\it polynomial} in $z$ and $w$. One calculates
\begin{equation}
R^\ke =(z+w) [\Delta] + {\text{higher\; order\; terms\; in\; }} q, t.
\end{equation}
This is easily seen since we have  $$\mathrm{Q}^{\ke}_{0, 2} (\WmodG, 0;  \Atwo)\cong \WmodG\times \overline{M}_{0,2}(\Atwo, (1,1)),$$
therefore the term we want to evaluate is
$[\Delta]$ times the equivariant $2$-point Gromov-Witten invariant $\lan D_0, D_\infty\ran_{0,2, (1,1)}^{\Atwo}$ of $\Atwo$. 
By a simple localization computation this Gromov-Witten invariant equals $z+w$.

In fact, we can evaluate the full series $R^\ke$ by virtual localization for the $\bS$-{\it action only}.
The description of the $\bS$-fixed loci on the $\Atwo$-graph spaces $\mathrm{Q}^{\ke}_{0, 2+k} (\WmodG, \beta;  \Atwo)$, 
together with the fixed and moving parts of the obstruction theory is similar to the description in the case of ``usual" graph spaces,
for which details can be found in \S4 of \cite{CK0}. 

Let $ ((C,x_1,x_2,\dots, x_{k+2}), f, \varphi)$ be $\bS$-fixed. Then the map $f$ must contract the distinguished components $C_1$ and $C_2$ to the same point in $\WmodG$. The rest of the
curve is contracted by $\varphi$ to the fixed points in $\Atwo$. It follows that
$\varphi^{-1}(\{p_0,p_n,p_\infty\})$ has exactly three connected components, denoted $C_0$, $C_n$, and $C_\infty$.

Due to the insertions of $D_0$ and $D_\infty$, the only components of the $\bS$-fixed locus that contribute to the localization computation are those for which $\varphi(x_1)=p_0$ and
$\varphi(x_2)=p_\infty$. Each such
component, denoted $F_{k_0,k_n,k_\infty}^{\beta_0,\beta_n,\beta_\infty}$, corresponds to a pair of ordered splittings $k=k_0+k_n+k_\infty$ and $\beta=\beta_0+\beta_n+\beta_\infty$, 
with $k_i\geq 0$ and
$\beta_i\in \mathrm{Eff}(W,\G,\theta)$. It is isomorphic to

\begin{equation}\label{S-fixed} 
\mathrm{Q}^\ke_{0,1+k_0\cup\bullet}(\WmodG,\beta_0)\times_{\WmodG} \mathrm{Q}^\ke_{0,k_n\cup\{\bullet,\star\}}(\WmodG,\beta_n)\times_{\WmodG}
\mathrm{Q}^\ke_{0,1+k_\infty\cup\star}(\WmodG,\beta_\infty),
\end{equation}
where the first fiber product is with respect to the evaluation maps $ev_\bullet$, the second fiber product is with respect to $ev_\star$.
The unstable cases $(k_i,\beta_i)=(0,0)$ are included in the above description by the conventions
$$\mathrm{Q}^\ke_{0,1\cup\bullet}(\WmodG,0)=\mathrm{Q}^\ke_{0,\{\bullet,\star\}}(\WmodG,0)=\mathrm{Q}^\ke_{0,1\cup\star}(\WmodG,0):=\WmodG,$$ 
$$ev_\bullet=ev_\star := id_{\WmodG} .
$$
The domain curves for the factors in \eqref{S-fixed} correspond to the three connected components $C_0$, $C_n$, and $C_\infty$.

The virtual class $[F_{k_0,k_n,k_\infty}^{\beta_0,\beta_n,\beta_\infty}]^{\mathrm{vir}}$, determined by the fixed part of the (absolute) obstruction theory, is equal to the refined Gysin 
pull-back of the virtual classes
on the factors in \eqref{S-fixed} by the appropriate diagonal map. 

The Euler class of virtual normal bundle is determined by the moving parts of $H^\bullet(C, \varphi^*T_{\Atwo})$, and of the deformations and automorphisms of the domain curve. It
is easily obtained from the normalization sequence for $$C=C_0\cup C_1\cup C_n\cup C_2\cup C_\infty$$ 
by a standard calculation. After all cancellations, its inverse has the form
\begin{equation}\label{S-normal}
\frac{1}{\mathrm{e}(N^{\mathrm{vir}})}=\frac{(z+w)^2}{zw\; \mathrm{cont} (0)\; \mathrm{cont} (n)\; \mathrm{cont} (\infty)},
\end{equation}
where 
\begin{equation*}\label{cont 0}
\mathrm{cont}(0)=\begin{cases}
1, & (k_0,\beta_0)=(0,0),\\
(-z-\psi_\bullet) , & \text{otherwise},
\end{cases}\end{equation*}
\begin{equation*}\label{cont n}
\mathrm{cont}(n)=\begin{cases}
z+w, & (k_n,\beta_n)=(0,0),\\
(z-\psi_\bullet)(w-\psi_\star ), & \text{otherwise},
\end{cases}\end{equation*}
\begin{equation*}\label{cont infty}
\mathrm{cont}(\infty)=\begin{cases}
1, & (k_\infty,\beta_\infty)=(0,0),\\
(-w-\psi_\star) , & \text{otherwise}.
\end{cases}\end{equation*}
For example, the two  factors of $z+w$ in the numerator come from the moving parts of $H^1(C_1,\varphi^*T_{\Atwo})$ and $H^1(C_2,\varphi^*T_{\Atwo})$.

Applying the virtual localization formula gives the factorization 
\begin{equation} \label{factorization 1}
\begin{split}
& \lla \gamma ^i D_0, \gamma^j D_\infty \rra_{0,2}^{(\ke, \Atwo)}(t)= (z+w)^2\times\\
&\sum_{l,m=1}^s \lla\gamma^i,\frac{\gamma_l}{-z-\psi_\bullet}\rra_{0,2}^\ke(t) 
\lla \frac{\gamma^l}{z-\psi_\bullet}, \frac{\gamma^m}{w-\psi_\star}\rra_{0,2}^\ke(t) 
\lla \frac{\gamma_m}{-w-\psi_\star},\gamma^j \rra_{0,2}^\ke(t). 
\end{split}
\end{equation}
The double brackets in the right-hand side include the unstable terms, as defined in \eqref{S operator} and \eqref{V operator}.

Recall now from \cite{CK0}, Proposition 5.3.1, that the unitary property of the $S^\ke$-operator states that its inverse is the operator defined by
\begin{equation*}
(S_t^\ke)^\star(-z)(\gamma)=\sum_i\gamma_i\lla \gamma^i,\frac{\gamma}{-z-\psi}\rra^\ke_{0,2}(t).
\end{equation*}

We then get from \eqref{factorization 1}
\begin{equation}\label{factorization 2} 
R ^\ke=     (z+w)^2 ((S_t^\ke)^\star (-z)\ot (S_t^\ke) ^\star (-w)) ( V_t^\ke (z, w) ).\end{equation}   
One checks immediately that in the right-hand side the terms of total degree zero in $z$ and $w$ cancel out, so
that the only term without a pole is $(z+w)^2\frac{[\Delta]}{z+w}=(z+w)[\Delta]$.
On the other hand,  $R^\ke$ has no poles, since it is a power series in $z,w$. We conclude that 
\begin{equation}\label{R} 
R^\ke = (z+w)[\Delta].
\end{equation}
Now \eqref{R}, \eqref{factorization 2}, and the unitary property imply the formula \eqref{V and S}. Theorem \ref{Two_Pointed} is proven.

\subsection{TRR and the proof of Theorem \ref{genus zero intro}} 

The following Proposition, together with Proposition \ref{summary}$(iii)$ obviously implies Theorem \ref{genus zero intro}.
\begin{Prop} Let $(W,\G,\theta)$ be an arbitrary semi-positive triple and let $n\geq 2$. If
\begin{equation*}
  (J_0^\ke(q))^{n-2}\lla \delta_1\psi_1^{a_1},\dots,\delta_n\psi_n^{a_n}
\rra^\ke_{0,n}(t)=
\lla \delta_1\psi_1^{a_1},\dots,\delta_n\psi_n^{a_n}\rra_{0,n}^\infty\left( \frac{t+J^\ke_1(q)}{J^\ke_0(q)}\right)
\end{equation*}
holds when all except possibly one of the $a_i$'s are equal to zero, then it holds in general. 

\end{Prop}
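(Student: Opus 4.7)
The plan is to split into two cases: $n=2$, which I will handle as a one-shot consequence of Theorem \ref{Two_Pointed}, and $n\ge 3$, which I attack by induction on the total descendant degree $N=\sum_{i=1}^n a_i$ using a multi-point generalization of Corollary \ref{TRR}. The base case of the induction (at most one $a_i$ nonzero) is the hypothesis.

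For $n=2$ with $a_1,a_2\ge 1$, the hypothesis applied to all $2$-pointed brackets with at most one descendant is equivalent, by comparing $1/z$-expansions in the definition \eqref{S operator}, to the operator identity $S^\ke_t=S^\infty_{\tau^\ke(t)}$, where $\tau^\ke(t)=(t+J^\ke_1)/J^\ke_0$. Substituting into the formula
\[
V^\ke_t(z,w)=\frac{S^\ke_t(z)\otimes S^\ke_t(w)([\Delta])}{z+w}
\]
of Theorem \ref{Two_Pointed} (valid at both $\ke$ and $\infty$) yields $V^\ke_t(z,w)=V^\infty_{\tau^\ke(t)}(z,w)$. Extracting the coefficient of $z^{-a_1-1}w^{-a_2-1}$ gives the wall-crossing identity for $\lla \delta_1\psi^{a_1},\delta_2\psi^{a_2}\rra^\ke_{0,2}(t)$.

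For $n\ge 3$ and at least two $a_i$ positive, I first upgrade Corollary \ref{TRR} to an $n$-point TRR. Apply Lemma \ref{psi comparison} to the forgetful-plus-stabilize map $f\colon\mathrm{Q}^\ke_{0,n+m}(\WmodG,\beta)\to\overline{M}_{0,3}$ (so $k=3$ and $f^*\psi_{1,3}=0$), combine with the splitting formula \eqref{splitting 1}, sum over $\beta$ and over the length $m$ of the soft tail, and regroup the partition of soft marks using $\tfrac{1}{m!}\binom{m}{m_1}=\tfrac{1}{m_1!\,m_2!}$. Assuming (by the symmetry of the bracket) that $a_1\ge 1$, this yields
\begin{align*}
&\lla \delta_1\psi_1^{a_1},\ldots,\delta_n\psi_n^{a_n}\rra^\ke_{0,n}(t) \\
&=\sum_{T_1\sqcup T_2=\{4,\ldots,n\}}\sum_{i=1}^s\lla \delta_1\psi_1^{a_1-1},\gamma_i,\{\delta_l\psi_l^{a_l}\}_{l\in T_1}\rra^\ke_{0,2+|T_1|}(t) \\
&\qquad\qquad\qquad\times\,\lla \gamma^i,\delta_2\psi_2^{a_2},\delta_3\psi_3^{a_3},\{\delta_l\psi_l^{a_l}\}_{l\in T_2}\rra^\ke_{0,3+|T_2|}(t),
\end{align*}
and the same identity holds at $\ke=\infty$.

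Now I multiply the left-hand side by $(J^\ke_0)^{n-2}$ and distribute it as $(J^\ke_0)^{|T_1|}\cdot(J^\ke_0)^{|T_2|+1}$ (the exponents add to $n-2$ because $|T_1|+|T_2|=n-3$), so each summand on the right becomes a product of two $J^\ke_0$-weighted double brackets with the correct $(J^\ke_0)^{n'-2}$ normalization for $n'=2+|T_1|$ and $n'=3+|T_2|$ respectively. Because each factor contains an inserted $\gamma_i$ or $\gamma^i$ carrying $\psi^0$, both factors have total descendant degree strictly less than $N$, so the inductive hypothesis applies to each; replacing them by their $\ke=\infty$ counterparts at argument $\tau^\ke(t)$ and re-summing by the generalized TRR at $\ke=\infty$ produces exactly the desired right-hand side. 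The main obstacle is the derivation of the multi-point TRR itself, which requires careful bookkeeping of the convolution in the soft-tail length $m$ together with the simultaneous splitting $\beta=\beta_1+\beta_2$; this is however standard given Lemma \ref{psi comparison} and \eqref{splitting 1}.
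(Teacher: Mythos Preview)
Your proof is correct and follows essentially the same strategy as the paper's: the base case $n=2$ via Theorem \ref{Two_Pointed}, and the induction step via the $n$-point TRR \eqref{n point TRR}. The only substantive difference is the choice of induction variable. The paper inducts on $n$, while you induct on the total descendant degree $N=\sum_i a_i$; your choice is arguably cleaner, since in the paper's scheme the second TRR factor can have $3+|T|=n$ insertions (when $T=\{4,\dots,n\}$), so the stated ``induction on $n$'' really hides a secondary descent on the number of nonzero $a_i$ that your induction on $N$ makes explicit. Your direct derivation of the $n$-point TRR from Lemma \ref{psi comparison} is equivalent to the paper's ``take derivatives in Corollary \ref{TRR}'', and your bookkeeping of the $(J_0^\ke)^{|T_1|}\cdot(J_0^\ke)^{|T_2|+1}$ distribution is exactly what is needed.
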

\begin{proof} It suffices to assume that the cohomology classes $\delta_i$ are
elements in our chosen basis of $H^*(\WmodG)$. 

We use induction on $n$. The base case $n=2$ follows from Theorem \ref{Two_Pointed}, 
as we already pointed out in Remark \ref{genus zero 2 pointed}.  

Let $n\geq 3$. Let $j_1,\dots ,j_n\in\{1,2,\dots, s\}$.
By taking derivatives in the TRR relation of Corollary \ref{TRR} we get 
\begin{equation}\label{n point TRR}
\begin{split}
&\lla \gamma_{j_1}\psi_1^{a_1},\gamma_{j_2}\psi_2^{a_2},\dots ,\gamma_{j_n}\psi_n^{a_n}\rra_{0,n}^\ke =\\
&\sum_{i=1}^s \sum_{S,T} \lla \gamma_{j_1}\psi_1^{a_1-1},(\gamma \psi^a)^{S} ,\gamma_i\rra_{0,2+|S|}^\ke
\lla \gamma^i, \gamma_{j_2}\psi_2^{a_2},\gamma_{j_3}\psi_3^{a_3}, (\gamma\psi^a)^{T}\rra_{0,3+|T|}^\ke.
\end{split}
\end{equation}
The inner sum is over all partitions $S\coprod T=\{4,\dots,n\}$, while 
the notation $(\gamma \psi^a)^{S}$ stands for the insertions $\gamma_{j_l}\psi^{a_l}$ at the appropriate markings, 
with $l$ running in $S$, and likewise for $(\gamma \psi^a)^{T}$. The double brackets are evaluated at ${\bf t}(\psi)$. By linearity, we may take the coefficients $t_i$ in 
${\bf t}(\psi)=t_0+t_1\psi+t_2\psi^2+\dots$ to lie in $H^*(\WmodG,\Lambda)$.

The relation \eqref{n point TRR} holds for all stability parameters. We specialize it at ${\bf t}(\psi)=t_0=t$ for parameter $\ke$, while for parameter $\infty$ we multiply it
by $J_0^\ke(q)^{2-n}$ and then specialize at ${\bf t}(\psi)=\frac{t+J^\ke_1(q)}{J^\ke_0(q)}$.
By the induction assumption the two right-hand sides of the resulting relations are equal, hence the same is true for the two left-hand sides. 
This proves the Proposition, hence Theorem \ref{genus zero intro} as well.
\end{proof}

\subsection{The string and dilaton equations}\label{string-dilaton} We close this section with a discussion of the versions of string and dilaton 
equations that hold for a general stability parameter $\ke$.
\subsubsection{String} For the string equation, we have a completely general result in genus zero: the 
class $ J_0^\ke(q)\one$ satisfies the string equation for the full genus zero descendant $\ke$-stable quasimap theory.

\begin{Prop}\label{genus zero string} Let the
semi-positive triple $(W,\G,\theta)$ be arbitrary. Then for every $\ke\geq 0+$ and $n\geq 2$,
\begin{equation}\label{string equation} \begin{split}
&\sum_{\beta}q^\beta\lan \delta_1\psi_1^{a_1},\dots , \delta_n\psi_n^{a_n}, J_0^\ke(q)\one\ran_{0,n+1,\beta}^\ke= \\
& \sum_\beta q^\beta\sum_{j=1}^n\lan \delta_1\psi_1^{a_1},\dots , \delta_{j-1}\psi_{j-1}^{a_{j-1}}, \delta_j\psi_j^{a_j-1}, \delta_{j+1}\psi_{j+1}^{a_{j+1}},\dots , \delta_n\psi_n^{a_n} \ran_{0,n,\beta}^\ke.
\end{split}\end{equation}
\end{Prop}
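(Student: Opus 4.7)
The plan is to prove a strengthening of the stated identity in the double bracket form,
\begin{equation*}
\lla\delta_1\psi^{a_1},\ldots,\delta_n\psi^{a_n},J_0^\ke\one\rra^\ke_{0,n+1}(t) = \sum_{j=1}^n\lla\delta_1\psi^{a_1},\ldots,\delta_j\psi^{a_j-1},\ldots,\delta_n\psi^{a_n}\rra^\ke_{0,n}(t),
\end{equation*}
for every $n\geq 2$ and all $a_i\geq 0$, $\delta_i\in H^*(\WmodG)$, with the unstable $(0,2)$-convention from Corollary \ref{unit two-pointed}; specializing $t=0$ and matching $q^\beta$-coefficients will then yield \eqref{string equation}. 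I would use a double induction on the pair $(n,A)$ with $A:=a_1+\cdots+a_n$, lexicographically ordered.

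The base case $n=2$ is obtained directly from \eqref{string 2}: expand both sides as formal Laurent series in $z^{-1}$ and $w^{-1}$, and collect the coefficient of $z^{-a_1-1}w^{-a_2-1}$; the unstable convention $\lan\delta_1,\delta_2\ran/(z+w)$ supplies the needed correction in the $(a_1,a_2)=(0,0)$ corner. For the inductive step $n\geq 3$, I would split by $A$. If $A=0$, differentiate the $n=2$, $a_1=a_2=0$ identity $\lla J_0^\ke\one,\delta,\gamma\rra^\ke_{0,3}(t)=\lan\delta,\gamma\ran$ successively with respect to $t_{0j}$-variables; since the right-hand side is $t$-independent, each derivative yields the desired vanishing $\lla J_0^\ke\one,\delta_1,\ldots,\delta_n\rra^\ke_{0,n+1}(t)=0$, matching RHS$=0$. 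If $A\geq 1$, without loss of generality $a_1\geq 1$: apply the TRR \eqref{n point TRR} to the $(n+1)$-point LHS bracket, distinguishing markings $1,2,3$ and summing over ordered splittings $S\sqcup T=\{4,\ldots,n,n+1\}$ of the remaining markings (where marking $n+1$ carries $J_0^\ke\one$). In each summand the $J_0^\ke\one$ lies in exactly one factor; apply the inductive string equation to that factor to convert $J_0^\ke\one$ into a sum of brackets with a lowered $\psi$-power. Then apply TRR in reverse to each $n$-point bracket on the RHS of the target identity, and match term by term using the completeness relation $\sum_i\gamma_i\ot\gamma^i$ for the diagonal.

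The main obstacle is the edge case $|S|=0$, $|T|=n-2$: the factor containing $J_0^\ke\one$ then has the full complement of $n$ other descendant markings, so the primary induction on $n$ does not apply. The key observation is that the total $\psi$-degree of this factor is $A-a_1<A$, so the secondary induction on $A$ takes over. The combinatorial matching between the TRR expansion of the LHS and the TRR expansions of the various RHS summands—with the $j=1$ term arising from the $n+1\in S$ contributions and the $j\geq 2$ terms arising from the $n+1\in T$ contributions—then completes the proof.
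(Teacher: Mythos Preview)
Your proposal is correct and follows the same overall strategy as the paper: the base case $n=2$ is Corollary~\ref{unit two-pointed}, and the inductive step for $n\geq 3$ uses the TRR relation \eqref{n point TRR}. The paper's proof is a terse two-sentence sketch (``the case $n\geq 3$ follows then by induction, using the TRR equation''), while you have made explicit the double induction on $(n,A)$ and correctly identified the edge case $S=\emptyset$ where the factor carrying $J_0^\ke\one$ still has $n+1$ markings, forcing the secondary induction on the total $\psi$-degree~$A$. This is exactly the bookkeeping needed to make the paper's sketch rigorous, so the two arguments are essentially the same.
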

\begin{proof} Equation \eqref{string equation} is obtained by setting $t=0$ in
\begin{equation}\label{string 3} \begin{split}
&\lla \delta_1\psi_1^{a_1},\dots , \delta_n\psi_n^{a_n}, J_0^\ke(q)\one\rra_{0,n+1}^\ke(t)=\\
&\lla \delta_1\psi_1^{a_1},\dots , \delta_{j-1}\psi_{j-1}^{a_{j-1}}, \delta_j\psi_j^{a_j-1}, \delta_{j+1}\psi_{j+1}^{a_{j+1}},\dots , \delta_n\psi_n^{a_n} \rra_{0,n}^\ke(t).
\end{split}\end{equation}
The case $n=2$ of \eqref{string 3} is given by Corollary \ref{unit two-pointed}. The case $n\geq 3$ follows then by induction, using the TRR equation \eqref{n point TRR}.
\end{proof}
\begin{Rmk}
It is very easy to see that Conjecture \ref{equivalent} together with the string equation in Gromov-Witten theory
imply \eqref{string equation} for any genus $g$. In fact, we conjecture that 
the $\ke$-CohFT of a semi-positive target is a cohomological field theory with unit $\one_\ke:=J_0^\ke\one$ over the Novikov ring. 
This means explicitly the following: for all $g,k$ with $2g-2+k\geq 1$ and arbitrary cohomology classes $\delta_1,\dots,\delta_k\in H^{2*}(\WmodG,\Lambda)$, we conjecture that the maps
\eqref{Omega} satisfy 
$$\Omega^\ke_{g,k+1}((\ot_{j=1}^k\delta_j)\ot\one_\ke)=p^*\Omega^\ke_{g,k}(\ot_{j=1}^k\delta_j),
$$ where $p:\overline{M}_{g,k+1}\lra\overline{M}_{g,k}$ is the forgetful map. 
We note that it should be possible to prove Conjecture \ref{main conj intro} in the case of {\it semisimple} theories (such as the fully equivariant theories that appear in Theorems \ref{Main Thm}  and \ref{local Grass}) by combining genus zero results as proved in the present paper with the
Givental/Teleman formula \cite{Giv-ss,Teleman} for higher-genus potentials. However, such an approach would require establishing first the above conjecture about the unit of the $\ke$-CohFT.

\end{Rmk}

\subsubsection{Dilaton} 
\begin{Lemma} Assume Conjecture \ref{equivalent} holds for the semi-positive triple $(W,\G,\theta)$ and the stability parameter $\ke\geq 0+$. Then $(J^\ke_0\one)\psi-J^\ke_1$ satisfies the
dilaton equation for $\ke$-stable quasimap theory: 
 
 \begin{equation}\label{dilaton} \begin{split}
&\sum_{\beta}q^\beta\lan \delta_1\psi_1^{a_1},\dots , \delta_n\psi_n^{a_n}, (J_0^\ke(q)\one)\psi-J_1^\ke(q)\ran_{g,n+1,\beta}^\ke= \\
& (2g-2+n)\sum_\beta q^\beta  \lan \delta_1\psi_1^{a_1},\dots , \delta_n\psi_n^{a_n} \ran_{g,n,\beta}^\ke,
\end{split}\end{equation}
the sums over all $\theta$-effective $\beta$ with $2g-2+n+\ke\beta(L_\theta)>0$.
\end{Lemma}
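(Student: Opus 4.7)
The plan is to deduce \eqref{dilaton} from Conjecture \ref{equivalent} combined with the classical dilaton equation of Gromov-Witten theory, by reducing both sides to the $\ke=\infty$ side and matching them there.

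First, by $\Lambda$-linearity I split the left-hand side as $J_0^\ke A - B$, with
\begin{equation*}
A = \sum_\beta q^\beta \lan\delta_1\psi_1^{a_1},\dots,\delta_n\psi_n^{a_n},\one\psi\ran^\ke_{g,n+1,\beta},\quad
B = \sum_\beta q^\beta \lan\delta_1\psi_1^{a_1},\dots,\delta_n\psi_n^{a_n},J_1^\ke\ran^\ke_{g,n+1,\beta},
\end{equation*}
and apply Conjecture \ref{equivalent} in the form \eqref{general} with $n+1$ markings to each. This rewrites $A$ and $B$ as generating series of Gromov-Witten invariants with additional $\tfrac{J_1^\ke}{J_0^\ke}$ insertions, both carrying the overall factor $(J_0^\ke)^{-(2g-1+n)}$.

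Second, the distinguished insertion in the transformed $A$ is $\one\psi$, so the Gromov-Witten dilaton equation turns the $(g,n+m+1,\beta)$-bracket into $(2g-2+n+m)$ times the same bracket with that marking deleted. I then split this factor as $(2g-2+n)+m$. After collecting powers of $J_0^\ke$, the $(2g-2+n)$-piece equals $(2g-2+n)$ times the Gromov-Witten side of Conjecture \ref{equivalent} with $n$ markings, which by the reverse direction of the conjecture is exactly the right-hand side of \eqref{dilaton}. For the $m$-piece I reindex $m'=m-1$ and peel off one $\tfrac{J_1^\ke}{J_0^\ke}$ insertion; combined with a stray $J_0^\ke$ it becomes a $J_1^\ke$ insertion, and the resulting expression coincides with what Conjecture \ref{equivalent} yields for $B$. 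Hence the $m$-piece cancels $-B$, and only the $(2g-2+n)$-piece survives, producing \eqref{dilaton}.

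The argument involves no new geometric input; the only obstacle is careful bookkeeping of the powers of $J_0^\ke$ and the reindexing of the sum over $m$. The low-genus edge cases are benign: for $(g,n)=(0,2)$ the factor $(2g-2+n)=0$ annihilates the first piece and the statement reduces to the $m$-piece/$B$ cancellation, while for $(g,n)=(1,0)$ the right-hand side of \eqref{dilaton} vanishes identically and the left-hand side, which involves only the stable $(1,1)$-brackets covered by \eqref{general}, vanishes by the same cancellation. The unstable $(g,n+1)$-contributions that appear when $g=0$ are handled by the conventions of Remark \ref{degree one}.
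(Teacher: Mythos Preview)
Your proposal is correct and is exactly the elementary calculation the paper leaves to the reader: pass to the Gromov--Witten side via Conjecture~\ref{equivalent}, apply the classical dilaton equation there, split the factor $(2g-2+n+m)$ as $(2g-2+n)+m$, reindex the $m$-piece to recognize it as $B$, and regroup. One small caveat: your treatment of $(g,n)=(1,0)$ glosses over the exceptional term $\langle\one\psi\rangle^\infty_{1,1,0}=\chi_{\mathrm{top}}(X)/24$ in the Gromov--Witten dilaton equation; since the summation range in \eqref{dilaton} excludes $\beta=0$ while Conjecture~\ref{equivalent} for $(1,1)$ includes it, you must add and subtract the $\beta=0$ contributions before invoking the conjecture, and then verify that the resulting boundary terms cancel against the exceptional dilaton term. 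The main argument is nonetheless exactly what the paper intends.
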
 

\begin{proof} Using the dilaton equation in Gromov-Witten theory, this is an elementary calculation which is left to the reader.
\end{proof}

\begin{Cor}\label{genus zero dilaton} Under the same assumptions as in Theorem \ref{genus zero intro}, the dilaton equation \eqref{dilaton} 
holds in genus zero.
\end{Cor}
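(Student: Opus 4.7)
The plan is to combine the preceding Lemma with Theorem \ref{genus zero intro}. Indeed, Theorem \ref{genus zero intro} establishes the $g=0$ case of Conjecture \ref{equivalent} precisely under the hypothesis that the $\bT$-action on $\WmodG$ has isolated fixed points (with the allowance for $E$-twisting), which is the hypothesis of the present corollary. The Lemma then converts Conjecture \ref{equivalent} into the dilaton equation, and an inspection of its proof shows that at fixed genus $g$ the argument uses only the conjecture at that same genus together with the Gromov-Witten dilaton equation at that genus. Since the Gromov-Witten dilaton equation is a standard fact on $\overline{M}_{0,n}$, specializing the Lemma to $g=0$ yields the statement.

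To be slightly more explicit about the elementary calculation hidden in the Lemma's proof: apply \eqref{general} at genus zero with $n+1$ marked points to the bracket whose last insertion is $(J^\ke_0\one)\psi_{n+1}-J^\ke_1$, and expand by linearity. In the first resulting term, the extra insertion becomes the standard dilaton insertion $\one\psi_{n+1}$ on the Gromov-Witten side, and the Gromov-Witten dilaton equation replaces the $(n+1+m)$-point bracket by $(n+m-2)$ times the $(n+m)$-point bracket. In the second term, the extra insertion is $J^\ke_1=J^\ke_0\cdot(J^\ke_1/J^\ke_0)$, so one of the infinitely many $J^\ke_1/J^\ke_0$ insertions on the right-hand side is marked; reindexing the sum via $k=m+1$ absorbs this into the combinatorial factor $k/k!$.

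Taking the difference of the two expansions, the coefficient of the $k$-fold insertion $(J^\ke_1/J^\ke_0)^k$ becomes
\begin{equation*}
\frac{n+k-2}{k!}-\frac{k}{k!}=\frac{n-2}{k!},
\end{equation*}
for every $k\geq 0$. Hence after multiplying by $(J^\ke_0)^{n-1}$ the full expression collapses to
\begin{equation*}
(n-2)\,J^\ke_0 \sum_{m\geq 0}\frac{1}{m!}\Bigl\langle \delta_1\psi_1^{a_1},\ldots,\delta_n\psi_n^{a_n},\tfrac{J^\ke_1}{J^\ke_0},\ldots,\tfrac{J^\ke_1}{J^\ke_0}\Bigr\rangle^\infty_{0,n+m,\beta},
\end{equation*}
which by a second application of \eqref{general}, this time with $n$ marked points, equals $(n-2)(J^\ke_0)^{n-1}\sum_\beta q^\beta\langle\delta_1\psi_1^{a_1},\ldots,\delta_n\psi_n^{a_n}\rangle^\ke_{0,n,\beta}$. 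Dividing by $(J^\ke_0)^{n-1}$ produces \eqref{dilaton} at $g=0$.

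There is no real obstacle: the only inputs are Theorem \ref{genus zero intro}, the Lemma, and the classical genus zero Gromov-Witten dilaton equation. One should just be mindful of the unstable moduli (requiring $n\geq 2$ for the dilaton insertion to make sense on the $\ke$-side and $n+m\geq 3$ on the Gromov-Witten side), but these boundary cases are handled by the conventions already in place for the double brackets and by the observation that the combinatorial identity above is an identity of formal power series in $q$ and $t$, valid term by term wherever both sides are defined.
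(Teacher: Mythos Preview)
Your proposal is correct and follows exactly the route the paper intends: the Corollary is stated without proof because it is immediate from the preceding Lemma together with Theorem \ref{genus zero intro}, and you have simply supplied the ``elementary calculation left to the reader'' that constitutes the Lemma's proof. One small expository wrinkle: the phrase ``after multiplying by $(J^\ke_0)^{n-1}$'' is slightly misleading, since that factor is already present on the left-hand side of \eqref{general} when you apply it with $n+1$ markings; it would be cleaner to say that the right-hand side of \eqref{general}, after the dilaton/reindexing manipulation, becomes the displayed expression.
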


\begin{Rmk}For $g=0$ and $n=2$, by using Corollary \ref{TRR} (at ${\bf t}(\psi)=0$) and equation \eqref{mirror map semi}, it is easy to see that the $\ke$-dilaton equation
$$\sum_{\beta}q^\beta\lan \delta_1\psi_1^{a_1},\delta_2\psi_2^{a_2}, (J_0^\ke(q)\one)\psi-J_1^\ke(q)\ran_{0,3,\beta}^\ke=0
$$
holds without any additional assumptions on the semi-positive triple $(W,\G,\theta)$. \end{Rmk}

\section{Virtual classes and $\ke$-wall-crossing} 
\subsection{Overview} The semi-positive GIT targets in Theorems \ref{Main Thm} and \ref{local Grass} share the common
feature that their $I$-functions satisfy $I_0(q)=1$, and hence $J_0^\ke (q)=1$ for all $\ke$. 
For semi-positive toric varieties this can be checked using the explicit formula
for the small $I$-function, as given in \cite{Givental}, see Lemma \ref{I_0=1} later in the paper,
while for local Calabi-Yau targets (as in Theorem \ref{local Grass}) it is an easy general fact, see Remark 5.5.6 in \cite{CK0}.
Therefore, the statement to be proved reduces to the identification of potentials after 
a shift by $J_1^\ke(q)$,
\begin{equation}
F^\ke_g({\bf t}(\psi))=F^\infty_g({\bf t}(\psi)+J_1^\ke(q)),
\end{equation}
or, in the version of Conjecture \ref{equivalent}, to
\begin{equation}\label{numerical}
\begin{split}
&\sum_\beta q^\beta \langle \delta_1\psi_1^{a_1},\dots,\delta_k\psi_k^{a_k}
\rangle^\ke_{g,k,\beta}=\\
& \sum_\beta q^\beta\sum_{m=0}^\infty \frac{1}{m!}\left\lan \delta_1\psi_1^{a_1},\dots,\delta_k\psi_k^{a_k}, {J^\ke_1(q)},\dots, {J^\ke_1(q)}
\right\ran_{g,k+m,\beta}^\infty ,
\end{split}
\end{equation}
for  arbitrary (fixed) $(g, k,\ke)$, arbitrary integers $a_1,\dots a_k\geq 0$, and arbitrary cohomology classes 
$\delta_1,\dots ,\delta_k \in H^*(\WmodG)$. The sums on both sides are over all $\beta$ with $2g-2+k+\ke\beta(L_\theta)>0$.

We upgrade in this section the numerical equality \eqref{numerical} to a stronger statement at the level of virtual classes.

\subsection{Shifted virtual classes} We will use the notations
$$X:=\WmodG,\;\;\;\; X_0:=W/_{\mathrm{aff}}\G,\;\;\;\; \mathfrak{X}:=[W/\G] $$ from now on for the three quotients associated to $(W,\G,\theta)$.

Fix $(g,k,\ke)$. We write $A:=[k+m]\setminus [k]$ for $m=0,1,2,...$ and denote 
$$ev_{A} =(ev_{k+1},\dots , ev_{k+m}):  \overline{M}_{g, k\cup A}(X, \beta )\lra X^{A}$$
the evaluation map.

For each $\theta$-effective class $\beta$, let $$[J_1^\ke]_\beta\in H^{\leq 2}(X,\QQ)$$ denote the coefficient of the $q^\beta$-term of $J_1^\ke(q)$. Recall that 
$J_1^\ke$ has no constant term with respect to $q$, so this coefficient vanishes for $\beta=0$. It also vanishes if $\beta(L_\theta) >1/\ke$ by \eqref{truncation}.

Define a generating series of {\it $\ke$-shifted virtual classes} 
\begin{equation}\label{shifted class}
\sum_{\beta}q^\beta \sum_{|A|=0}^\infty\frac{1}{|A|!} \sum_{\beta_0+\sum_{a\in A}\beta_a=\beta}
[ \overline{M}_{g, k\cup A}(X, \beta_0 )]^{\mathrm{vir}}\cap ev_{A}^*\left( \otimes_{a\in A} [J_1^\ke]_{\beta_a}  \right) .
\end{equation}
As we remarked above, $ [J_1^\ke]_{\beta_a}$ can be nonzero only when $\beta_a\neq 0$. Hence, for each $\beta$, only finitely many terms contribute to the coefficient of $q^\beta$.

Our goal is to compare \eqref{shifted class} with the corresponding generating series 
$$\sum_{\beta}q^\beta [\QmapXe]^{\mathrm{vir}} .$$
We will do so after push-forward via natural maps to a common target.

Let $\iota :X\lra\PP^N$ be the ($\T$-equivariant) 
embedding over the affine quotient induced by the relative polarization $\cO(\theta)$ (to unburden the notation we write simply $\PP^N$ for the
relative projective space $\PP ^N_{X_0}$). It
induces for each stability parameter $\ke '$ a morphism
\begin{equation}\label{embedding}
\iota_{\ke '} : \mathrm{Q}^{\ke '} _{g, k} (X,\beta)   \lra \mathrm{Q}^{\ke '} _{g, k} (\PP ^N, d(\beta) ),
\end{equation}
where the degree $d(\beta)$ is equal to $\beta(L_\theta)$, see \S 3.1 of \cite{CK0}. 

In addition, for $\ke' >\ke''$, there are  ``contraction of rational tails" morphisms
$$c_{\ke'}^{\ke ''}:  \mathrm{Q}^{\ke'}_{g, k} (\PP ^N, d(\beta) )\lra  \mathrm{Q}^{\ke ''}_{g, k} (\PP ^N, d(\beta) ),$$
described in \cite{MOP}, \cite{Toda} (and also recalled in \S3.2.2 of \cite{CK0}). When $\ke '=\infty$ we write $c^{\ke ''}=c_\infty ^{\ke ''}$ and when $\ke ''=0+$ we write $c_{\ke '}=c_{\ke '}^{0+}$.

Let now $(g,k,\ke,\beta)$ be fixed, with $2g-2+k+\ke\beta(L_\theta)>0$. 
For each $A$ and each decomposition $\beta=\beta_0+\sum_{a\in A}\beta_a$ there is a morphism
$$b_{\{\beta_a\}} :  \mathrm{Q}^{\ke}_{g, k\cup A} (\PP ^N, d(\beta_0) )\lra  \mathrm{Q}^{0+}_{g, k} (\PP ^N, d(\beta) ), $$
see \cite{CK0}, \S3.2.3. Informally, the map $b_{\{\beta_a\}}$ replaces each marking  $a\in A$ with a base-point of length $d(\beta_a)$.
If in addition $\beta_a(L_\theta)\leq 1/\ke$ for all $a\in A$, then $b_{\{\beta_a\}}$ factors as 
\begin{equation*}
 \mathrm{Q}^{\ke}_{g, k\cup A} (\PP ^N, d(\beta_0) ) \stackrel{b_{\{\beta_a\}}^\ke } {\lra} \mathrm{Q}^{\ke}_{g, k} (\PP ^N, d(\beta) ) \stackrel{c_\ke}{\lra}  \mathrm{Q}^{0+}_{g, k} (\PP ^N, d(\beta) ).
\end{equation*}
We have the ($\T$-equivariant) composition
\begin{equation}\label{contraction1}
( \coprod_{A,\beta_0,\beta_a} b_{\{\beta_a\}}^\ke\circ c^\ke \circ \iota_\infty ): \coprod_{A,\beta_0,\beta_a}  \overline{M}_{g, k\cup A}(X, \beta_0 ) 
\lra \mathrm{Q}^{\ke}_{g, k} (\PP ^N, d(\beta) ).
\end{equation}

\begin{Thm}\label{ClassThm} Let $X$ be a semi-positive nonsingular quasi-projective
toric variety of dimension $n$, viewed as a GIT quotient $\CC^{n+r}/\!\!/_\theta (\CC^*)^r$ in the standard way, as in Theorem \ref{Main Thm}.
Let $\T\cong (\CC^*)^{n+r}$ be the natural ``big" torus acting on $X$. Then
\begin{equation}\label{Class equation}
\begin{split}
& (\iota_\ke)_* [\QmapXe]^{\mathrm{vir}} =\\
&\sum _{A,\beta_0,\beta_a} \frac{1}{|A|!} 
(b_{\{\beta_a\}}^\ke \circ c^\ke \circ \iota_\infty )_* \left([ \overline{M}_{g, k\cup A}(X, \beta_0 )]^{\mathrm{vir}}\cap ev_{A}^*( \otimes_{a\in A} [J_1^\ke]_{\beta_a}) \right).
\end{split}
\end{equation}
More generally,
\begin{equation}\label{with insertions}
\begin{split}
& (\iota_\ke)_* \left([\QmapXe]^{\mathrm{vir}} \cap\prod_{i=1}^kev_i^*\delta_i\right)=\\
&\sum _{A,\beta_0,\beta_a} \frac{1}{|A|!} 
(b_{\{\beta_a\}}^\ke \circ c^\ke \circ \iota_\infty )_* \left([ \overline{M}_{g, k\cup A}(X, \beta_0 )]^{\mathrm{vir}}\cap ev_{A}^*( \otimes_{a\in A} [J_1^\ke]_{\beta_a})\prod_{i=1}^kev_i^*\delta_i  \right)
\end{split}
\end{equation}
for all $\delta_1,\dots ,\delta_k \in H^*_{\T, \mathrm{loc}}(X,\QQ)$.
\end{Thm}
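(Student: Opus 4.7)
The plan is to establish \eqref{Class equation} at the level of $\T$-equivariant cycles via virtual localization with respect to the big torus $\T$ acting on the ambient projective space $\PP^N$. Equation \eqref{with insertions} will then follow immediately by capping both sides with $\prod_{i=1}^k ev_i^*\delta_i$ and applying the projection formula, since the evaluation maps $ev_i$ for $i\in[k]$ are intertwined by $\iota_\ke$, $\iota_\infty$, $c^\ke$, and $b_{\{\beta_a\}}^\ke$ at the first $k$ markings.

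The first move is to analyze the $\T$-fixed loci on $\mathrm{Q}^\ke_{g,k}(\PP^N, d(\beta))$. These are indexed by decorated graphs whose vertices carry a $\T$-fixed point of $\PP^N$, a genus, a set of markings, and possibly base-point data, while edges encode $\T$-invariant $\PP^1$'s between adjacent fixed points. Only graphs entirely supported on $\iota(X^\T)\subset(\PP^N)^\T$ contribute to either side of \eqref{Class equation}: the LHS factors through $(\iota_\ke)_*$ of a virtual class on $\QmapXe$, and each summand on the RHS factors through $(\iota_\infty)_*$ of a virtual class on $\overline{M}_{g,k\cup A}(X,\beta_0)$. Edge contributions from the $\T$-invariant $\PP^1$'s in $X$ are identical on both sides, so the proof reduces to matching the local \emph{vertex} contributions graph-by-graph.

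At each vertex $v$ supported at a $\T$-fixed point $p\in X$, of genus $g_v$ and total absorbed class $\beta_v$, the LHS contribution is the local $\T$-equivariant virtual class of the genus-$g_v$ part of $\QmapXe$ collapsing to $p$ with absorbed base-point class $\beta_v$, whereas the RHS contribution is $[\overline{M}_{g_v,k_v\cup A_v}(X,\beta_{0,v})]^{\mathrm{vir}}|_p$ capped with $\bigotimes_{a\in A_v}[J_1^\ke]_{\beta_a}|_p$, summed over splittings $\beta_v=\beta_{0,v}+\sum_{a\in A_v}\beta_a$ and symmetrized by the factor $1/|A_v|!$. Matching these at each vertex is the heart of the argument, and I would reduce it, via the genus-reduction lemma of Marian, Oprea, and Pandharipande in \cite{MOP}, to the genus-zero identity for residues on the unpointed graph spaces $QG^{0+}_{0,0,\beta_a}(X)$ which produces $[J_1^\ke]_{\beta_a}$. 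The semi-positivity assumption together with $I_0=1$ (Lemma \ref{I_0=1}, to be established for toric $X$) is essential to ensure that higher $1/z$-coefficients of $J^\ke_{sm}$ play no role and that the dimension counts in the localization are tight.

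The main obstacle lies in the positive-genus vertex analysis: in genus zero, a $\T$-fixed quasimap collapsed to $p$ with class $\beta_a$ is governed by the genus-zero small $I$-function formula recalled in Proposition \ref{summary}, but at genus $g_v\geq 1$ an a priori more complicated positive-genus component mapping to $p$ appears, entangling the base-point data with curve moduli. The MOP genus-reduction lemma handles exactly this: it separates the positive-genus component from the absorbed base-point data, yielding a clean factorization into the stable-map class $[\overline{M}_{g_v,k_v\cup A_v}(X,\beta_{0,v})]^{\mathrm{vir}}|_p$ times genus-zero residue factors for each absorbed base-point. Reassembling the vertex and edge matches through the virtual localization formula on $\mathrm{Q}^\ke_{g,k}(\PP^N,d(\beta))$ then yields \eqref{Class equation}, and \eqref{with insertions} follows by capping as explained.
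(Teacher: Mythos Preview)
Your overall architecture is right: localize both sides for the big torus $\T$, observe that only graphs supported on $X^\T\subset(\PP^N)^\T$ contribute, match edge and flag contributions, and reduce to a vertex-by-vertex comparison which one then attacks via the MOP genus-reduction mechanism. So far this tracks the paper.

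The gap is in the genus-zero step. You write that MOP ``separates the positive-genus component from the absorbed base-point data, yielding a clean factorization into the stable-map class \dots\ times genus-zero residue factors for each absorbed base-point,'' and that this reduces matters to ``the genus-zero identity for residues on the unpointed graph spaces $QG^{0+}_{0,0,\beta_a}(X)$.'' That is not what MOP gives. What MOP actually says is that the vertex contributions on each side, pushed forward to $\overline{M}_{g,k|d(\beta)}$, can be written as polynomials $P^{0+}_{\beta,\sigma}(\hat\psi_j,D_J)$ and $P^\infty_{\beta,\sigma}(\hat\psi_j,D_J)$ in canonical form which are \emph{independent of $g$ and $k$}; hence it suffices to check $P^{0+}_{\beta,\sigma}=P^\infty_{\beta,\sigma}$ by integrating against all monomials in $\psi$-classes over all chain-type strata $S(\tau,\cP)\subset\overline{M}_{0,k|d(\beta)}$. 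This is Lemma~\ref{MOPlemma} and Lemma~\ref{independence}. After this reduction you are \emph{not} left with the unpointed graph-space residue identity; you are left with a family of pointed genus-zero integrals over Hassett spaces, and for toric targets these are not obviously equal (the genus-zero moduli spaces are not smooth of expected dimension, unlike the Grassmannian case in \cite{MOP}).

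The paper handles this remaining genus-zero problem with two further ingredients that your proposal omits. First, a \emph{Localized Uniqueness Lemma} (Lemma~\ref{uniqueness lemma}): one passes to the graph spaces $QG^\ke_{0,k,\beta}(X)$ and $\overline{M}G_{0,k,\beta}(X)$, applies the extra $\CC^*$-localization, and uses a pole-cancellation argument in the spirit of Givental to prove that the localized descendant integrals $\langle\frac{1}{z(z-\psi_\bullet)},\mu_B(\psi)\rangle^{\ke,p_\sigma}_{0,B\cup\bullet,\beta}$ are $\ke$-independent, by induction on $|B|$ and $\beta$. The base case $B=\emptyset$ is exactly Givental's mirror identity $i_\sigma^*[J(q,I_1(q),z)]_\beta=i_\sigma^*I_\beta$, which is the only place your ``unpointed graph space residue'' appears. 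Second, a \emph{Splitting Lemma} (Lemma~\ref{Splitting Lemma}) shows that integrals over deeper strata $S(\tau,\cP)$ factor into products of integrals over top strata for smaller $\beta$, enabling an induction on $\beta$ that feeds into the Uniqueness Lemma. Without these two pieces the argument does not close: the MOP lemma alone leaves you with a genus-zero statement that is not the small-$I$ identity and requires genuine additional work.
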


\begin{Cor}\label{Class level Fano} If $X$ is a nonsingular projective Fano toric variety, then
$$(\iota_\ke)_*  \left( [\QmapXe]^{\mathrm{vir}} \cap\prod_{i=1}^kev_i^*\delta_i\right)= 
(c^\ke \circ \iota_\infty )_*  \left( [ \overline{M}_{g, k}(X, \beta )]^{\mathrm{vir}}\cap\prod_{i=1}^kev_i^*\delta_i\right).
$$
\end{Cor}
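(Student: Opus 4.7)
The plan is to deduce the corollary directly from Theorem \ref{ClassThm} together with the fact that the shift series $J^\ke_1(q)$ vanishes identically in the projective Fano toric case. The first step would be to recall, from the paragraph preceding the statement of Corollary \ref{Class level Fano} (and also invoked explicitly earlier in the introduction), that for a nonsingular projective Fano toric variety $X$ presented in its standard GIT form $(\CC^{n+r},(\CC^*)^r,\theta)$ one has $I_1(q)=0$. This follows from a degree count in Givental's explicit product formula for $I_{sm}$: in the Fano case, every nonzero $\theta$-effective curve class $\beta$ satisfies $\beta(-K_X)>0$, which forces the coefficient of $1/z$ in each $q^\beta$-term of $I_{sm}$ to vanish. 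Combined with $I_0(q)=1$, this leaves only the constant term $\one$ in the $z^0$ and $1/z$ parts of $I_{sm}$.

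Next I would invoke the truncation relation \eqref{truncation}, which expresses $J^\ke_1(q)$ as a polynomial $q$-truncation of $I_1(q)$ for every $\ke>0$ (and $J^{0+}_1(q)=I_1(q)$). Since $I_1(q)=0$, we conclude $J^\ke_1(q)=0$ for all $\ke\geq 0+$, so that the coefficient $[J^\ke_1]_{\beta_a}$ vanishes for every $\theta$-effective class $\beta_a$.

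Finally I would apply Theorem \ref{ClassThm}, specifically equation \eqref{with insertions}, which expands the push-forward $(\iota_\ke)_*([\QmapXe]^{\mathrm{vir}}\cap\prod_i ev_i^*\delta_i)$ as a sum indexed by subsets $A$ and splittings $\beta=\beta_0+\sum_{a\in A}\beta_a$. Each summand with $|A|\geq 1$ contains the tensor factor $\otimes_{a\in A}[J^\ke_1]_{\beta_a}$, which is zero by the previous step. Only the term $A=\emptyset$, $\beta_0=\beta$ survives; for this term the empty tensor product is the unit, the combinatorial factor $1/0!$ is $1$, and the map $b^\ke_{\{\beta_a\}}$ with empty index set is the identity on $\mathrm{Q}^\ke_{g,k}(\PP^N,d(\beta))$. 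What remains on the right-hand side is exactly $(c^\ke\circ\iota_\infty)_*([\overline{M}_{g,k}(X,\beta)]^{\mathrm{vir}}\cap\prod_i ev_i^*\delta_i)$, proving the corollary. The only nontrivial ingredient is Theorem \ref{ClassThm} itself; granted that, this argument is mechanical, and I expect no obstacle beyond cleanly verifying that $b^\ke_{\emptyset}$ acts as the identity on the single surviving stratum.
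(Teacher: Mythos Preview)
Your proposal is correct and matches the paper's intended approach: the corollary is stated immediately after Theorem \ref{ClassThm} without a separate proof, precisely because once one uses the fact $I_1(q)=0$ (hence $J^\ke_1(q)=0$ by \eqref{truncation}) asserted in the introduction for projective Fano toric varieties, the sum in \eqref{with insertions} collapses to the single $A=\emptyset$ term. One minor correction: the fact $I_1(q)=0$ is not stated in a paragraph preceding the corollary, but only in the introduction; and your one-line degree count (``$\beta(-K_X)>0$ forces the $1/z$ coefficient to vanish'') is not quite sufficient on its own, since $\beta(-K_X)=1$ would a priori allow a $1/z$ term---the full argument (as in the proof of Lemma \ref{I_0=1}) uses that the lowest power of $1/z$ is $\sum_\rho d_\rho + \#\{\rho : d_\rho<0\}$, which one checks is $\geq 2$ in the projective Fano case---but since you are citing the paper's assertion rather than reproving it, this does not affect the validity of your deduction.
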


Since the $\psi$-classes on both sides are pulled-back from  $\mathrm{Q}^{\ke}_{g, k} (\PP ^N, d(\beta) )$,
Theorem \ref{ClassThm} immediately implies equation
\eqref{numerical}, and hence Theorem \ref{Main Thm}. In fact, Theorem \ref{ClassThm} 
gives the stronger identification of the $\T$-equivariant $\ke$-quasimap CohFT with the $\ke$-shifted $\T$-equivariant
Gromov-Witten CohFT.

Theorem \ref{ClassThm} holds in fact for {\it any} semi-positive GIT presentation $(\CC^{n+r},(\CC^*)^r,\theta)$ of a toric variety, see \S\ref{GIT presentations}. It is instructive to check it directly
for the simplest nontrivial example.

\begin{EG}\label{point target} Consider the GIT triple $(\CC,\CC^*,\theta)$, with $$\theta=1\in \ZZ\cong\chi(\CC^*).$$ The quotient $X$ is a single point. 
A quasimap from a curve $C$ to the quotient stack $[\CC/\CC^*]$ of class is specified by a line bundle
$\cL$
of degree $d$ on $C$ and a global section $0\neq u\in H^0(C,\cL)$, up to a constant nonzero multiple, i.e., by a divisor of degree $d$ on $C$. In particular $d\geq 0$ is the class of the
quasimap. The base-points are the zeroes of the section $u$. Hence the cone
$\mathrm{Eff}(\CC,\CC^*,\theta)$ is canonically identified with $\NN$ and the Novikov ring is the ring of power series $\QQ[[q]]$. 

The Gromov-Witten moduli spaces 
are empty for $d\neq 0$ and are equal to the moduli spaces of stable curves $\Mgk$ for $d=0$. The virtual class is just the fundamental class.

Let us consider now the stability condition $\ke=0+$. For $d\geq 0$ we have $$\mathrm{Q}^{0+}_{g,k}(X,d)\cong\overline{M}_{g,k|d}/S_d.$$
Here the notation for the mixed moduli spaces $\overline{M}_{g,k | d}$ is taken from \S4.1 of \cite{MOP}.
They are a special case of Hassett's moduli spaces of weighted stable curves, \cite{Hasset}, and
parametrize nodal genus $g$ curves with two sets of markings $\{x_1,\dots, x_k\}$ and $\{\hat{x}_1,\dots ,\hat{x}_d\}$, such that the markings in the first set are distinct, 
away from the nodes of $C$ and from the markings $\hat{x}_j$, while the markings in the second set are away from the nodes, 
but $ \hat{x}_j$ and $ \hat{x}_k$ are allowed to coincide. The quotient is by the symmetric group $S_d$, which acts by permuting the second kind of markings.
Again, the virtual class is the fundamental class.

Similarly, we have the identification
$$QG^{0+}_{0,0,d}(X)\cong (\PP^1)^d/S_d\cong \PP^d$$
for the unpointed graph space. From this, the small $I$ function is easily computed to be
$$I_{sm}(q)=e^{q/z},$$
hence $I_1(q)=q$.

From the above discussion, the only splitting $d=d_0+\sum_a d_a$ that contributes to the right-hand side of equation \eqref{Class equation} has $d_0=0$ and $d_a=1$ for all $a\in A$,
so that $A=[d]$. The sum
reduces to the single term
$$\frac{1}{d!} (b_{\{1,\dots ,1\}}\circ c)_*([\overline{M}_{g,k+d}]).
$$
There is a canonical birational map 
$$h:\overline{M}_{g,k+d}\lra \overline{M}_{g,k|d},$$
constructed in \cite{Hasset} and one checks easily that 
$$b_{\{1,\dots ,1\}}\circ c:\overline{M}_{g,k+d} =\overline{M}_{g, k\cup A}(X,0)\lra \mathrm{Q}^{0+}_{g,k}(X,d)=\overline{M}_{g,k|d}/S_d $$ 
coincides with the Hassett contraction $h$ followed by the projection to the quotient by $S_d$.
The equality \eqref{Class equation} now follows.

The above argument generalizes immediately to all stability parameters $\ke$, and to all GIT presentations $(\CC^r,(\CC^*)^r,\theta)$, with $r\geq 2$ and 
$\theta=(1,\dots,1)\in \ZZ^r\cong\chi((\CC^*)^r)$
of the point target $X$. For example, the small I function is $I_{sm}=e^{(q_1+\dots +q_r)/z}$. Note that the number of Novikov parameters changes with the GIT presentation.
\end{EG}

\subsection{The geometric shifting} There is a better, more geometric way to describe the shifted virtual classes. Namely,
for fixed $(g,k,\ke,\beta)$, define the ``mixed" moduli stack  $\overline{M}_{g, k}^{\ke}(X, \beta )$ as 
\begin{equation}\label{M epsilon} 
\overline{M}_{g, k}^{\ke}(X, \beta ) :=\coprod_{|A|=0}^\infty \coprod_{\beta_0+\sum_{a\in A}\beta_a=\beta}
      \overline{M}_{g, k\cup A} (X, \beta _0) \times _{X ^A} \prod _{a\in A} \mathrm{Q}_{0, 1\cup a} ^{\ke } (X, \beta _a),  
      \end{equation}
where the fiber products are taken over the evaluation maps at the markings indexed by $A$. As before, we take the second disjoint union over splittings $\beta_0+\sum_{a\in A}\beta_a=\beta$ with
$\beta_a\neq 0$ for all $a\in A$, so
that only finitely many nonempty fiber products appear in the right-hand side.

We have the cartesian square
\begin{equation*}
\begin{CD}
\overline{M}_{g, k}^{\ke}(X, \beta ) @>\pi_1 >>  \coprod _{A, \beta_0, \beta_a } \overline{M}_{g, k\cup A} (X, \beta _0) \times \prod _{a\in A} \mathrm{Q}_{0, 1\cup a} ^{\ke } (X, \beta _a)\\
@V\pi_2 VV  @VV ev_A^{\overline{M}}\times ev_A^{Q^\ke} V\\
 \coprod _{A}X^A @ > \Delta >>    \coprod _{A}X^A\times X^A
\end{CD}
\end{equation*}
where $\Delta$ is the diagonal map. It endows $\overline{M}_{g, k}^{\ke}(X, \beta )$ with a natural virtual class given by
\begin{equation}
[\overline{M}_{g, k}^{\ke}(X, \beta )]^{\mathrm{vir}} = \Delta^! \sum _{A, \beta_0, \beta_a } \frac{
[\overline{M}_{g, k\cup A} (X, \beta _0)]^{\mathrm{vir}} \otimes (\otimes _{a} [\mathrm{Q}_{0, 1\cup a} ^{\ke } (X, \beta _a)]^{\mathrm{vir}})}{|A|!}.
\end{equation}
The division by $|A|!$ is included to make the markings in $A$ unordered.

Fix $A$ and a splitting $\beta=\beta_0+\sum_{a\in A}\beta_a$. Let
$$p=p_{A,\beta_0,\beta_a}:  \overline{M}_{g, k\cup A} (X, \beta _0) \times _{X ^A} \prod _{a\in A} \mathrm{Q}_{0, 1+a} ^{\ke } (X, \beta _a)\ra  \overline{M}_{g, k\cup A} (X, \beta _0)$$
be the projection.

\begin{Lemma}\label{comparison} We have
\begin{equation*}
\begin{split}
&p_* \Delta^! ([\overline{M}_{g, k\cup A} (X, \beta _0)]^{\mathrm{vir}} \otimes (\otimes _{a\in A} [\mathrm{Q}_{0, 1\cup a} ^{\ke } (X, \beta _a)]^{\mathrm{vir}}))=\\
& [ \overline{M}_{g, k\cup A}(X, \beta_0 )]^{\mathrm{vir}}\cap (ev_{A}^{\overline{M}})^*\left( \otimes_{a\in A} [J_1^\ke]_{\beta_a}  \right) .
\end{split}
\end{equation*}
Hence 
$$\sum_\beta q^\beta  p_*[\overline{M}_{g, k}^{\ke}(X, \beta )]^{\mathrm{vir}} $$
equals the generating series of $\ke$-shifted virtual classes \eqref{shifted class}.
\end{Lemma}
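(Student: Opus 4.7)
The approach is to apply base change to the refined Gysin pullback defining $[\overline{M}_{g,k}^{\ke}(X,\beta)]^{\mathrm{vir}}$ and then identify the resulting class on $X^A$ via Proposition \ref{summary}(ii), using the standing assumption $J_0^\ke=1$ of this section.

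Fix $A$, $\beta_0$, and $\{\beta_a\}_{a\in A}$. By definition, the virtual class of the component
\[\overline{M}_{g, k\cup A}(X, \beta_0) \times_{X^A} \prod_{a\in A}\mathrm{Q}_{0, 1\cup a}^\ke(X, \beta_a)\]
is the refined Gysin pullback $\Delta^!$ of the exterior product $[\overline{M}_{g, k\cup A}(X, \beta_0)]^{\mathrm{vir}} \otimes \bigotimes_{a\in A} [\mathrm{Q}_{0, 1\cup a}^\ke(X, \beta_a)]^{\mathrm{vir}}$. Since each $\mathrm{Q}_{0, 1\cup a}^\ke(X, \beta_a)$ is proper over $X_0$, the $Q$-evaluation $ev_A^{Q^\ke}$ is proper and hence so is $p$. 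The standard base-change identity for the refined Gysin pullback along the diagonal $\Delta\colon X^A\hookrightarrow X^A\times X^A$ (a regular embedding, as $X^A$ is smooth), combined with the factorization of the virtual class on the product, then yields
\begin{equation*}
\begin{split}
&p_*\Delta^{!}\!\left([\overline{M}_{g, k\cup A}(X, \beta_0)]^{\mathrm{vir}} \otimes \bigotimes_{a\in A}[\mathrm{Q}_{0, 1\cup a}^\ke(X, \beta_a)]^{\mathrm{vir}}\right) = \\
&\qquad [\overline{M}_{g, k\cup A}(X, \beta_0)]^{\mathrm{vir}} \cap (ev_A^{\overline{M}})^{*}\!\bigotimes_{a\in A} (ev_\bullet)_*[\mathrm{Q}_{0, 1\cup a}^\ke(X, \beta_a)]^{\mathrm{vir}},
\end{split}
\end{equation*}
where $ev_\bullet$ denotes the evaluation on each bubble factor used in the fiber product.

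Next, I identify the bubble pushforward with $[J_1^\ke]_{\beta_a}$. Viewing $\mathrm{Q}_{0, 1\cup a}^\ke(X, \beta_a)$ as the two-pointed quasimap moduli with markings $\bullet$ and $a$, the projection formula together with Poincar\'e duality with respect to the chosen dual bases gives
\[(ev_\bullet)_*[\mathrm{Q}_{0, 1\cup a}^\ke(X, \beta_a)]^{\mathrm{vir}} = \sum_{i=1}^s \gamma_i\, \langle \gamma^i, \one \rangle_{0, 2, \beta_a}^\ke.\]
Under the assumption $J_0^\ke = 1$, equation \eqref{mirror map semi} of Proposition \ref{summary}(ii) reads $[J_1^\ke]_{\beta_a}=\sum_i \gamma_i\langle\gamma^i,\one\rangle_{0,2,\beta_a}^\ke$, so the right-hand side is exactly $[J_1^\ke]_{\beta_a}$. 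Substituting into the previous display proves the first asserted equality of the Lemma. Weighting by $\tfrac{1}{|A|!}q^\beta$ and summing over $A$, over the decompositions $\beta_0+\sum_{a\in A}\beta_a=\beta$, and over $\beta$, then reassembles the generating series \eqref{shifted class}, yielding the second assertion.

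The only genuine obstacle is the base-change step, namely the commutation of $\Delta^{!}$ with the proper pushforward $p_*$. I would dispatch this by a direct appeal to the bivariant projection formula for refined intersection products over the smooth base $X^A$ (as in Fulton, Chapter~8): given the Cartesian square in question and the properness of $ev_A^{Q^\ke}$, this formula translates $\Delta^{!}$ of an exterior product into a cap product of the $\overline{M}$-class against the pullback of the $Q$-pushforward, which is precisely the identity used above.
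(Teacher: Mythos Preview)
Your argument is correct and follows essentially the same route as the paper: both proofs use the commutation of the refined Gysin map $\Delta^{!}$ with the proper push-forward $p_*$ (via the standard bivariant/projection formula over the smooth base $X^A$) to reduce to identifying $(ev_a)_*[\mathrm{Q}_{0,1\cup a}^\ke(X,\beta_a)]^{\mathrm{vir}}$, and then invoke equation \eqref{mirror map semi} together with the standing hypothesis $J_0^\ke=1$ to recognize this as $[J_1^\ke]_{\beta_a}$. The paper spells out the relevant two-square Cartesian diagram explicitly, while you summarize it as a single base-change step, but the content is the same.
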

\begin{proof} Consider the fiber product diagram with cartesian squares
\begin{equation*}
\begin{CD}
\overline{M}_A \times_{X^A} \prod _{a\in A} \mathrm{Q}_{a} ^{\ke } @>\pi_1 >>
\overline{M}_A \times \prod _{a\in A} \mathrm{Q}_a^\ke \\
@V p VV  @VV {\mathrm {id}}\times ev_A^{Q^\ke} V\\
\overline{M}_A @> ({\mathrm {id}}, ev_A^{\overline{M}})>> \overline{M}_A\times X^A\\
@V ev_A^{\overline{M}} VV @VV ev_A^{\overline{M}}\times {\mathrm {id}} V\\
X^A @ > \Delta >>   X^A\times X^A ,
\end{CD}
\end{equation*}
where we have used the shorthand notations $\overline{M}_A= \overline{M}_{g, k\cup A} (X, \beta _0)$ and $\mathrm{Q}_a^\ke=\mathrm{Q}_{0, 1+a} ^{\ke } (X, \beta _a)$. The middle
horizontal arrow is the embedding as the graph of the evaluation map $ev_A^{\overline{M}}$, so it is a regular embedding. By standard properties of the refined Gysin maps we have
\begin{equation*}
\begin{split}
p_*\Delta^! ([\overline{M}_A]^{\mathrm{vir}} \otimes &(\otimes _{a\in A} [\mathrm{Q}_a^\ke]^{\mathrm{vir}}))= \\
&\Delta^! ({\mathrm {id}}\times ev_A^{Q^\ke})_*  ([\overline{M}_A]^{\mathrm{vir}} \otimes (\otimes _{a\in A} [\mathrm{Q}_a^\ke]^{\mathrm{vir}}))=\\
& ({\mathrm {id}}, ev_A^{\overline{M}})^!([\overline{M}_A]^{\mathrm{vir}} \otimes (ev_A^{Q^\ke})_*(\otimes _{a\in A} [\mathrm{Q}_a^\ke]^{\mathrm{vir}}))=\\
& [\overline{M}_A]^{\mathrm{vir}} \cap (ev_A^{\overline{M}})^*(\otimes _{a\in A} (ev_a)_*[\mathrm{Q}_a^\ke]^{\mathrm{vir}}).
\end{split}
\end{equation*}
It remains to recall from equation \eqref{mirror map semi} in Proposition \ref{summary} $(ii)$ that
\begin{equation*}
J_1^\ke(q)=\sum_{\beta\neq 0} q^\beta (ev_1)_* [\mathrm{Q}_{0, 2} ^{\ke } (X, \beta )]^{\mathrm{vir}}
\end{equation*}
to conclude the Lemma.
\end{proof}

Let $$s : \overline{M}_{g, k}^{\ke}(X, \beta )\lra \mathrm{Q}^{\ke}_{g, k} (\PP ^N, d(\beta) )$$
denote the composition of the projection $p$ with the morphism \eqref{contraction1}. From  Lemma \ref{comparison} we obtain that Theorem \ref{ClassThm} is equivalent to
\begin{Thm}\label{geometric shifting}
Let $X$ be a toric GIT target as in Theorem \ref{ClassThm}. Then 
$$(\iota_\ke)_* \left([\QmapXe]^{\mathrm{vir}} \cap\prod_{i=1}^kev_i^*\delta_i\right)=s_* \left( [\overline{M}_{g, k}^{\ke}(X, \beta )]^{\mathrm{vir}} \cap\prod_{i=1}^kev_i^*\delta_i\right), $$
for all $(g,k,\beta,\ke)$ with $2g-2+k+\ke\beta(L_\theta)>0$.
\end{Thm}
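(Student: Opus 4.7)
My plan is to prove \eqref{geometric shifting} by $\T$-equivariant virtual localization with respect to the big torus $\T\cong(\CC^*)^{n+r}$ acting on $X$ (which has isolated fixed points), followed by a term-by-term matching of contributions at corresponding $\T$-fixed loci. Because the embedding $\iota$ and all maps $c^\ke$, $b^{\ke}_{\{\beta_a\}}$, $\iota_\infty$ in sight are $\T$-equivariant, it suffices to verify the equality of the associated $\T$-equivariant classes in $\mathrm{Q}^\ke_{g,k}(\PP^N, d(\beta))$; by linearity in the insertions $\delta_i$, we may further assume each $\delta_i$ is the equivariant class of a $\T$-fixed point of $X$.

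Next I would describe the $\T$-fixed loci on both sides. Since $X^\T$ is finite, on the LHS every $\T$-fixed $\ke$-stable quasimap to $X$ contracts each irreducible component of its domain to a $\T$-fixed point of $X$, with all of the class concentrated at base points and at ``edge'' components whose image traces a one-dimensional $\T$-orbit. On the RHS, a $\T$-fixed point of $\overline{M}_{g,k}^\ke(X,\beta)$ consists of a $\T$-fixed stable map to $X$ (also contracting each component to a fixed point) glued at the extra markings $a\in A$ to $\T$-fixed elements of $\mathrm{Q}^\ke_{0,1\cup a}(X,\beta_a)$. I would then construct a natural bijection between the two sets of decorated graphs: a base point of class $\gamma$ at a fixed point $p$ on the LHS corresponds to an extra marking $a$ on the RHS carrying a $\T$-fixed $\ke$-quasimap of class $\beta_a=\gamma$ evaluating to $p$, while the underlying pointed curves match via Hassett-type contractions as in Example \ref{point target}.

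With the bijection in hand, contributions are matched vertex by vertex and edge by edge. ``Edge'' contributions and the contributions of contracted pointed-curve moduli agree on both sides by identical formulas for the virtual normal bundles. The crucial ``base-point / extra-marking'' contributions agree because, by \eqref{mirror map semi} in Proposition \ref{summary}(ii) together with the polynomial truncation \eqref{truncation}, the localization of $[J_1^\ke]_\gamma$ at $p$ equals precisely the sum of residues over $\T$-fixed loci in $\mathrm{Q}^\ke_{0,1\cup a}(X,\gamma)$ whose $\bullet$-marking evaluates to $p$. This reduces \eqref{geometric shifting} to a purely combinatorial matching of ``base-point clusters,'' of which Example \ref{point target} is the prototype and which one handles by induction on $\beta(L_\theta)$ using the finiteness of effective classes with bounded $\theta$-degree.

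The main obstacle is the presence, on the LHS, of $\T$-fixed loci where a positive-genus component of the domain is contracted to a single fixed point $p$ while carrying base-point data of nontrivial class. Such ``higher-genus-at-$p$'' contributions have no apparent analogue on the RHS, where the total genus is entirely carried by the Gromov--Witten factor and the extra markings only see \emph{genus zero} quasimap data via the factors $\mathrm{Q}^\ke_{0,1\cup a}$. Here I would invoke the genus-reduction lemma of Marian--Oprea--Pandharipande \cite{MOP}: on the relevant locus, the contribution of a positive-genus contracted component with a single base-point cluster can be rewritten, via a degeneration of the virtual class, as a pairing of a genus-$g'$ Gromov--Witten factor against a genus-zero $J_1^\ke$-insertion, with all remaining terms vanishing by the semi-positivity hypothesis on $(W,\G,\theta)$. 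The new technical input beyond the Grassmannian case of \cite{MOP} is checking that this reduction is uniform across the combinatorics of the fan at every torus-fixed point of $X$, and that it respects the refined Gysin pull-backs in \eqref{bdry1} defining the mixed virtual class. Granted the lemma, a simultaneous induction on $\beta(L_\theta)$ and on the total genus of contracted components matches every fixed-locus contribution and closes the proof.
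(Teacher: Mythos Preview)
Your overall plan---$\T$-localize both sides and match vertex contributions---is the same starting point as the paper, but the crucial middle steps are missing or mischaracterized.

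First, there is no clean bijection of $\T$-fixed loci of the kind you describe. On the shifted stable-map side, many different decorated graphs (with various rational tails and various distributions of the $A$-markings) all collapse under $b\circ c\circ\iota_\infty$ to the \emph{same} $\T$-fixed locus $F_{p_\sigma}$ in $\mathrm{Q}^\ke_{g,k}(\PP^N,d(\beta))$; the comparison is therefore a sum of many stable-map contributions against a sum of quasimap contributions, not a term-by-term match. In particular, a single base point on the quasimap side does not correspond to a single extra marking on the stable-map side: the stable-map vertex contribution $M_v$ of \eqref{Mv} mixes collapsed rational-tail trees with $A$-markings in a nontrivial way.

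Second, your identification of the ``main obstacle'' is off. Positive-genus contracted components occur on \emph{both} sides, and the Hodge-bundle factor $\re(\mathbb{E}^\vee\otimes T_{\sigma_v}X)/\re(T_{\sigma_v}X)$ is identical on both sides; it plays no role in the discrepancy. The actual difficulty is that even after stripping off this common factor and the edge/flag weights, the remaining vertex expressions $M_v$ and $\mathrm{Q}_v$ are different-looking polynomials in $\hat\psi_j$ and $D_J$, and one must prove they coincide as \emph{abstract} polynomials. The MOP ``genus-reduction'' lemma (Lemma~\ref{MOPlemma}) is not a degeneration of virtual classes as you describe; it is the statement that a symmetric polynomial in $\hat\psi_j, D_J$ in canonical form vanishes iff all its integrals over chain-type strata in $\overline{M}_{0,k|d}$ vanish. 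This is what reduces the question to genus zero.

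Third---and this is the real gap---the genus-zero equality is \emph{not} ``a purely combinatorial matching of base-point clusters'' as in Example~\ref{point target}. Even for Fano toric targets (no shifting), the quasimap moduli are not smooth of the expected dimension, so the argument in \cite{MOP} for Grassmannians does not apply. The paper's new input is the Localized Uniqueness Lemma (Lemma~\ref{uniqueness lemma}): one passes to graph spaces, applies $\CC^*$-localization to obtain a factorization \eqref{factorization maps}, and uses the absence of $z$-poles in an auxiliary generating series $D^{\mu,\ke}_{k,\beta,\sigma}$ to force the localized one-pointed integrals to agree; the base of the induction is Givental's toric mirror identity $i^*_\sigma I_\beta = i^*_\sigma [J(q,I_1,z)]_\beta$. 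A Splitting Lemma (Lemma~\ref{Splitting Lemma}) then handles the deeper strata. None of this mechanism appears in your proposal, and without it the argument does not close.
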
 
      
 There is a parallel construction for graph spaces. Define  the mixed graph space $\overline{M}G_{0, k}^{\ke}(X, \beta )$ to be
 $$
 \coprod_{|A|=0}^\infty \coprod_{\beta_0+\sum_{a\in A}\beta_a=\beta}
      \overline{M}G_{0, k\cup A} (X, \beta _0) \times _{X ^A} \prod _{a\in A} \mathrm{Q}_{0, 1\cup a} ^{\ke } (X, \beta _a),
$$      
with virtual class
\begin{equation*}
\begin{split}
&[\overline{M}G_{0, k}^{\ke}(X, \beta )]^{\mathrm{vir}} =\\
& \Delta^! \sum _{A, \beta_0, \beta_a } \frac{
[\overline{M}G_{0, k\cup A} (X, \beta _0)]^{\mathrm{vir}} \otimes (\otimes _{a} [\mathrm{Q}_{0, 1\cup a} ^{\ke } (X, \beta _a)]^{\mathrm{vir}})}{|A|!}.
\end{split}
\end{equation*}
Here $ \overline{M}G_{0, k\cup A} (X, \beta _0)$ denotes the usual graph space in Gromov-Witten theory.
The analog of Lemma \ref{comparison} holds for these graph spaces, with the same proof.

We also have morphisms 
\begin{equation*}
( \coprod_{A,\beta_0,\beta_a} b_{\{\beta_a\}}^\ke\circ c^\ke \circ \iota_\infty ): \coprod_{A,\beta_0,\beta_a}  \overline{M}G_{0, k\cup A}(X, \beta_0 ) 
\lra QG^{\ke}_{0, k} (\PP ^N, d(\beta) ),
\end{equation*}
$$s=( \coprod_{A,\beta_0,\beta_a} b_{\{\beta_a\}}^\ke\circ c^\ke \circ \iota_\infty \circ p): \overline{M}G_{0, k}^{\ke}(X, \beta ) \ra QG^{\ke}_{0, k} (\PP ^N, d(\beta) ),$$
and
\begin{equation*}
\iota_\ke : QG^\ke_{0,k,\beta}(X) \lra QG^{\ke}_{0, k} (\PP ^N, d(\beta) ).
\end{equation*}

\section{Proof of Theorem \ref{ClassThm}}

\subsection{Overview} 
The idea of the proof, inspired by the work of
Marian, Oprea, and Pandharipande, who treated the case of Grassmannian targets in \cite{MOP}, is to apply $\T$-localization to both the shifted stable map virtual class and to the
quasimap virtual class and then match the push-forward of the localization residues lying over the same $\T$ fixed locus in the space of quasimaps to $\PP^N$.
A genus independence lemma from \cite{MOP} is used to reduce the general case to genus zero. The genus zero toric case requires new ideas, even in the case of Fano targets when no
shifting of virtual classes occurs. We handle it by using $\CC^*$-localization on graph spaces and a localized version of Givental's uniqueness lemma.

As the complete argument is somewhat involved, to keep notation lighter and make the main ideas clear, we present full details in the case $\ke=0+$. The extension to general $\ke$ is
a routine matter, as we indicate in \S\ref{epsilon for toric}. 
The proof is split over several subsections.

\subsection{Toric targets}\label{toric case} Quasimaps to toric targets in their standard GIT presentation were first introduced in \cite{CK} and we very briefly recall the 
description given there to fix some notation. 
Note that the toric varieties were assumed to be projective in \cite{CK} to ensure properness of the moduli spaces, but the description is exactly the same in our more general situation.

Let $X$ be a smooth quasi-projective toric variety of dimension $n$, given by a nonsingular fan $\Sigma$ in $N_\RR$, with $N$ an $n$-dimensional lattice.
Let $\Sigma(1)$ denote the set of rays ($1$-dimensional cones) of $\Sigma$ and put $r:=|\Sigma(1)|-n$. Assume that the rays span $N_\RR\cong\RR^n$ and that $r\geq 1$.
Denote by $\CC ^{\Sigma (1)}$ the vector space spanned by the $1$-dimensional cones and by $\G$ the $r$-dimensional complex torus
$(\CC ^*)^r$. The fan data determines an action of $\G$ on $\CC ^{\Sigma (1)}$ and a ``chamber " in $\chi(\G)$ with the property that for any character in this chamber
we have $X\cong \CC ^{\Sigma (1)}/\!\!/_{\theta}\G$. We fix such a character $\theta$ with $\cO(\theta)$ relatively very ample over the affine quotient.

The coordinates on $\CC ^{\Sigma (1)}$ give ``homogeneous coordinates" 
$(z_\rho)_{\rho\in\Sigma(1)}$ on $X$. 

The ``big" torus ${\T} = (\CC ^*)^{\Sigma (1)}$ acts on  $\CC ^{\Sigma (1)}$ by scaling of coordinates and this
action descends to the quotient stack $\mathfrak{X}=[\CC ^{\Sigma (1)}/\G]$. The induced action on $X$ has isolated $\T$-fixed points, naturally 
corresponding to the maximal cones $\sigma \in \Sigma (n)$. In terms of homogeneous coordinates the fixed
point $p_\sigma$ is described by
$$ z_\rho=1,\;\; {\text{ if}}\; \rho\not\subset\sigma ,\;\;\; z_\rho=0,\;\; {\text {if}}\; \rho\subset\sigma .
$$ 
The $1$-dimensional $\T$-orbits in $X$ are isolated and
correspond to cones of dimension $n-1$ in the fan. We will say that such an orbit is {\it closed} if the corresponding cone is the intersection of
two maximal cones $\sigma_1$ and $\sigma_2$. Such an orbit is a $\PP^1$ joining the two $\T$-fixed points $p_{\sigma_1}$ and $p_{\sigma_2}$.

Let $\xi_\rho$
be the character of the $(\CC^*)^r$-action on the corresponding coordinate axis in $\CC^{\Sigma(1)}$. The associated $\T\times\G$-equivariant line bundle
$$L_{\rho}:=\CC^{\Sigma(1)}\times \CC_{\xi_\rho}$$
descends to the $\T$-equivariant line bundle $\underline{L}_{\rho}$ on $X$. 
For each maximal cone $\sigma$ in the fan, the set $\{ \xi_\rho,\; \rho \not\subset \sigma\}$ is a basis of the character group $\chi(\G)\cong\Pic ([\CC ^{\Sigma (1)}/\G])$.

A quasimap to the quotient stack $[\CC ^{\Sigma (1)}/\G]$
may be described by the data
\begin{equation}\label {toric qmap}
((C, x_1,\dots ,x_k), \{\cL_\rho \}_{\rho\in\Sigma(1)}, \{u_\rho \}_{\rho\in\Sigma(1)})
\end{equation}
where $(C, x_1,\dots ,x_k)$ is a prestable genus $g$ pointed curve, $\cL_\rho,\; \rho\in \Sigma(1)$, are line bundles on $C$, and $u_\rho\in H^0(C,\cL_\rho)$ are global sections,
subject to the nondegeneracy condition
$$(u_\rho(x))_{\rho\in \Sigma(1)}\in (\CC^{\Sigma(1)})^s$$
for all but finitely many points $x\in C$.
For any maximal cone $\sigma \in \Sigma (n)$, the line bundles $\cL_\rho, \; \rho\not\subset\sigma$ uniquely determine all the other. The sections $u_\rho$ are determined up to
the action of $\G$.
If $\beta$ is the class of the quasimap,
we put $$d_\rho:=\deg (\cL_\rho)=\beta(\xi_\rho)\in\ZZ.$$ The set $\{ d_\rho\: | \; \rho\in\Sigma(1)\}$ determines the class $\beta$. 
Since 
$$\det(T_{[\CC ^{\Sigma (1)}/\G]})=\otimes_{\rho\in \Sigma(1)}L_{\rho},$$ 
semi-positivity of the triple $(\CC^{\Sigma(1)},\G,\theta)$ translates 
into the condition 
$$\sum_{\rho\in \Sigma(1)}d_\rho\geq 0$$ 
for all quasimaps to $[\CC ^{\Sigma (1)}/\G]$.

\begin{Rmk}
For this standard GIT presentation we described above, the unstable locus for the linearization $\theta$ 
has codimension at least $2$ and $\Pic (X)= \Pic ([\CC ^{\Sigma (1)}/\G])$ via restriction.
The basis $\{ \xi_\rho,\; \rho \not\subset \sigma\}$ restricts to the $\ZZ$-basis 
$$\{\underline{L}_{\rho}=\mathcal{O}_X(D_\rho),\; \rho \not\subset \sigma\}$$ 
of $\Pic(X)$, with $D_\rho$ the toric divisor in $X$ given by the equation $z_\rho=0$.
Further,
semi-positivity of the triple $(\CC^{\Sigma(1)},\G,\theta)$ is equivalent to the semi-positivity of the anti-canonical class of the toric variety $X$, as $-K_X=\sum_{\rho\in \Sigma(1)} D_\rho$ and 
$\mathrm{Eff}(\CC^{\Sigma(1)},\G,\theta)$ coincides with the semigroup of integral points in the Mori cone of effective curves in $X$.
\end{Rmk}

The notation $\QmapX$ will be used from now on for the moduli space of $(0+)$-stable quasimaps to $X$. A $\CC$-point in $\QmapX$ is specified by the data \eqref{toric qmap},
such that the base-points 
$$\{x\in C\; |\; (u_\rho(x))_{\rho\in \Sigma(1)}\in (\CC^{\Sigma(1)})^{us}\}$$ 
are away from nodes and markings, and satisfying the $(0+)$-stability 
$$\deg(\omega_C(\sum_i x_i)\otimes \cL_\theta^\ke)>0,\; \forall \ke\in \QQ_{>0}.$$
The line bundle $\cL_\theta$ is obtained by writing $\theta$ in the basis $\{ \xi_\rho,\; \rho \not\subset \sigma\}$ (for some maximal cone $\sigma$) and taking the corresponding
tensor product of the $\cL_\rho$'s.

We describe in the next two subsections the $\T$-fixed loci in the moduli spaces $\overline{M}_{g, k}(X, \beta )$ and $\QmapX$
and their contributions to virtual localization formulas, following \cite{Kont}, \cite{GP}, and \cite{MOP}.

\subsection{$\T$-fixed loci}

\subsubsection{$\T$-fixed loci for stable maps}\label{fixed-maps}
Connected components of the $\T$-fixed loci in $\overline{M}_{g, k}(X, \beta )$ are labeled by
decorated graphs $\Gamma = (V, E)$. If $(C, {\bf x}, f)$ is $\T$-fixed,
the corresponding graph $\Gamma$ is obtained as follows. 

Vertices in $v\in V$ correspond to the connected components $C_v$ of $f^{-1}(X^{\T})$. 

Edges $e\in E$ correspond to irreducible components $C_e$ of the domain curve $C$ which are not contracted by $f$. 
These components $C_e$ are rational curves and the restriction $f_e$ of $f$ to each of them is a multiple cover of a $1$-dimensional {\it closed} $\T$-orbit in $X$
of some degree $\delta_e$, ramified only over the two torus fixed points in the orbit.

We decorate each vertex $v\in V$ with the triple $(\sigma_v, g_v, k_v)$, where  $\sigma _v = f(C_v)$,
$g_v$ is the arithmetic genus of $C_v$ (we set $g_v=0$ if the corresponding connected component of $f^{-1}(X^{\T})$ is a single point), 
and $k_v$ is the set of markings carried by $C_v$.
A vertex $v$ is non-degenerate if
$$2 g_v-2 +\mathrm{val}(v)>0,
$$
where $\mathrm{val}(v)$ is the sum of (the cardinality of) $k_v$ and the number of edges incident to $v$.

Each edge $e\in E$ is decorated with the pair $( \mathrm{Orb}_e , \delta_e)$ consisting of the image
$\mathrm{Orb}_e \in \{ \text{closed}\; 1\text{-dim $\T$-orbits in $X$}\}$ of $C_e$ under $f$ and the covering
number $\delta_e \in \NN_{\ge 1}$, so that $\delta_e[\mathrm{Orb}_e] = f_*[C_e]$. 

Note that the resulting graph is connected, without self-edges, and that we have the compatibility conditions
\[ 1- \chi(\Gamma ) + \sum _v g_v = g, \ \coprod _v k_v = \{1, \dots, k\} \] and
\[ \sum_{e\in E} \delta_e[\mathrm{Orb}_e]=\beta.\]

Up to a finite quotient by automorphisms, the component $F_\Gamma$ attached to the decorated graph $\Gamma$ is isomorphic to
 \[ \prod _{v\in V} \overline{M}_{g_v, \mathrm{val}(v) }, \] with the factors corresponding to degenerate vertices treated as points.
 
Conversely, given a decorated graph satisfying the compatibility conditions above, one obtains a $\T$-fixed stable map by taking for each vertex $v$
a stable curve $C_v$ in the corresponding moduli space, for each edge $e$ the covering map $f_e:C_e\lra \PP ^1\cong\mathrm{Orb}_e$ of degree $\delta_e$,
ramified over $0$ and $ \infty$, and gluing along the graph incidences.
 
\subsubsection{$\T$-fixed loci for stable quasimaps}\label{fixed-qmaps}
There is a similar description for the $\T$-fixed locus of $\QmapX$.
The difference is that this time the graphs $\Gamma '$ will have no tail edges, but instead carry
an additional decoration of base-points. Here a tail edge is an edge for which one of its adjacent vertices has valency 1.
The extra decoration is an assignment to each $v\in V$ of a tuple of nonnegative integers 
$$(d_{\rho , v} \in \NN,\; \rho\in\Sigma(1),\; \rho \not\subset \sigma _v).$$
The component $F_{\Gamma '}$ attached to the graph $\Gamma '$ is isomorphic to
\begin{align*}    \prod _{v\in V} \overline{M}_{g_v, \mathrm{val}(v) | \sum _{\rho \not\subset \sigma _v} d_{\rho, v}}
      \end{align*} 
up to a finite quotient. The (Hassett) mixed moduli spaces $\overline{M}_{g,m | d}$ are described in Example \ref{point target}.
The automorphism group for each vertex contains 
the product of symmetric groups $\prod_{\rho \not\subset \sigma _v} S_{d_{\rho, v}}$ which acts by
permuting the second kind of markings.

A vertex of $\Gamma'$ is {\it non-degenerate} if it satisfies the stability condition
$$2g_v-2+\mathrm{val}(v)+\ke(\sum_{\rho \not\subset \sigma _v} d_{\rho , v})>0,\; \text{for every}\; \ke\in\QQ_{>0}.$$
For degenerate vertices, the corresponding factors in $F_{\Gamma '}$ are again treated as points

The quasimap elements in $F_{\Gamma '}$ are constructed as follows.
First, we view $d_{\rho , v}$ also as an index set with $d_{\rho ,v}$ elements such that $d_{\rho, v}$ are mutually disjoint.
For an element in $\prod _{v\in V} \overline{M}_{g_v, \mathrm{val}(v) | \sum _{\rho \not\subset \sigma _v} d_{\rho, v}}$, 
let $C_v$ be the corresponding $m_v$-pointed genus $g_v$ curve $C_v$ with markings $x_i$
and additional base-point markings $\hat{x}_i$, $i\in \cup_\rho d_{\rho , v}$. For $\rho\not\subset \sigma_v$, put
\[ \cL_\rho =  \mathcal{O}_{C_v} (\sum _{i\in d_{\rho , v} } \hat{x}_i) \]      with the canonical section $u_\rho$, whose divisor of zeroes is $\sum _{i\in d_{\rho , v} } \hat{x}_i$.
The remaining $\cL_\rho$'s are determined in terms of $\cL_\rho, \rho \not\subset\sigma _v$,  and 
we set $u_\rho =0$ for $\rho\subset \sigma_v$.
We obtain a stable toric quasimap by gluing the resulting quasimaps $(C_v, x_i \in m_v, \cL_\rho, u_\rho)$, $v\in V$ along graph incidences with the $2$-pointed genus $0$ map data 
$(C_e\cong \PP ^1, 0, \infty; f_e)$ corresponding to the edges $e$. 

Note that for each vertex the quasimap $(C_v, x_i \in m_v, \cL_\rho, u_\rho)$ carries the class $\beta_v$
determined by 
$$\beta_v(L_\rho))=d_{\rho, v},\; \rho \not\subset \sigma _v.$$ Hence the compatibility condition that must be satisfied by the additional decoration is
\[ \sum_{v\in V}\beta_v +\sum_{e\in E} \delta_e[\mathrm{Orb}_e]=\beta.\]

\subsection{Virtual normal bundles}\label{VirNorm} 
\subsubsection{Stable maps} For graphs $\Gamma$ as in \S\ref{fixed-maps}, let $N^{\mathrm{vir}}_{\Gamma}$ denote the virtual normal bundle to the corresponding $\T$-fixed component
 $F_\Gamma\subset \overline{M}_{g, k}(X, \beta )$. 
The multiplicative inverse of its $\T$-equivariant Euler class, lifted to $\prod _{v\in V} \overline{M}_{g_v, \mathrm{val}(v) }$, is obtained by a standard computation. 
In the form given in \cite{MOP}, it is written as a product of contributions from
vertices, edges, and flags $(v,e)$ consisting of a vertex and an incident edge
\begin{align}\label{mapcont}\frac{1}{\re(N^{\mathrm{vir}}_{\Gamma})} =
 \prod _{v}  \mathrm{MapCont}(v) \prod  _{e} \mathrm{MapCont}(e) \prod _{(v, e)} \mathrm{MapCont}(v, e).
\end{align}
The above grouping is made so that the edge and flag contributions (as well as the contributions of degenerate vertices) are pure weights, 
i.e., they are pulled back from $H^*_{\T, loc} (\Spec \CC)$, 
while each non-degenerate vertex contribution is in
$H^*_{\T, loc} (\overline{M}_{g_v, \mathrm{val}(v)})$. Furthermore, for such a vertex
\begin{align}\label{vertexmapcont}\mathrm{MapCont}(v)= \frac{\re(\mathbb{E} ^\vee \ot T_{\sigma _v}X)}{\re (T_{\sigma_v}X)}\frac{1}{\prod _{e} \frac{w(e)}{\delta_e} - \psi _{e} }
\end{align}
where $\mathbb{E}$ is the Hodge bundle, $\re$ denotes the (equivariant) Euler class, and $T_{\sigma_v}X$ is the $\T$-representation 
on the tangent space to $X$ at the fixed point indexed by 
$\sigma_v$. The product in the denominator is over all edges incident to $v$, $w(e)$ denotes the weight of the $\T$-representation on the tangent space 
$T_{\sigma_v}\mathrm{Orb}_e$, and $\psi_e$ is the Chern class of the cotangent line bundle on $\overline{M}_{g_v, \mathrm{val}(v)}$ at the marking corresponding to $e$.

\subsubsection{Stable quasimaps} Let $F_{\Gamma '} \subset \QmapX$ be a component of the $\bT$-fixed locus, corresponding to a graph $\Gamma'$ as in 
\S \ref{fixed-qmaps}. Our goal in this subsection is to provide an expression simlar to \eqref{mapcont}-\eqref{vertexmapcont} for the multiplicative inverse 
$1/\re (N^{vir}_{\Gamma'})$ of the $\T$-equivariant Euler class of the
{\it absolute} virtual normal bundle, lifted to $\prod _{v\in V} \overline{M}_{g_v, \mathrm{val}(v) | \sum _{\rho \not\subset \sigma _v} d_{\rho, v}}$.

For the computation we will make use of the detailed description of
the obstruction theory for quasimaps from \S 5 of \cite{CK}. At a point $(C, x_1,\dots, x_k, \{L_\rho\}, \{u_\rho\})$ of $F_{\Gamma '}$,
the obstruction theory {\it relative to} $\fM _{g, k}$ (governing deformations of the pairs $(L_\rho,u_\rho)$ of line bundles with sections)
is the virtual
$\T$-representation $$H^\bullet (C, Q):=H^0(C,Q)-H^1(C,Q),$$ where
$Q$ is defined by the Euler sequence on $C$
\begin{equation}\label{Euler sequence} 0 \ra \cO _C ^{\oplus (|\Sigma (1)| -\dim X) } \ra \bigoplus _\rho  \cL_\rho \ra Q \ra 0. \end{equation}
The absolute obstruction theory has an additional piece, consisting of the deformations and the automorphisms of the pointed domain curve $(C,x_1,\dots, x_k)$.

By definition, $N^{vir}_{\Gamma'}$ is the {\it moving part} of the absolute obstruction theory. Accordingly, the inverse of its Euler class is the product of two contributions. One factor is obtained from the moving part of the deformation space of $(C,x_1,\dots, x_k)$ (automorphisms of the pointed domain contribute only to the $\bT$-fixed part of the obstruction theory). It has the same expression as in the case of stable maps:
 \begin{equation}\label{domain cont}  \prod _{(v,e)}\frac{1}{(\frac{w(e)}{\delta_e} - \psi _e )}. \end{equation}
 It remains to calculate the other factor, which is 
the inverse of the $\T$-equivariant Euler class of the moving part of $H^\bullet (C, Q)$. For this, note first that on each non-contracted component $C_e$ of $C$, the
Euler sequence \eqref{Euler sequence} is pulled-back via the map $f_e$ from the Euler sequence presenting the tangent bundle of $X$, while on contracted components $C_v$,
the monomorphism in the sequence is the composition
\[\xymatrix{ \bigoplus _{\rho \not\subset \sigma _v} \cO _{C_v}  \ar[rrr]^{\oplus _{\rho \not\subset \sigma} \sum _{i\in d_{\rho , v}} \hat{x}_i} 
             & & & \bigoplus _{\rho\not\subset \sigma _v}  \cL_\rho \ar[rr]^{(\mathrm{id}, 0)} &&  \bigoplus _\rho \cL _\rho  }.\]
             Using the normalization sequence for the domain curve $C$, 
the contribution we seek 
is the product of three factors, namely the moving parts of
\begin{align}\label{1}  \prod _{v\in V}   \frac{\re\left(H^1(C_v, Q)\right)}{\re\left(H^0 (C_v, Q)\right)}  \\
\label{2} \prod _{e\in E}  \frac{\re\left(H^1(\PP ^1, f_{e} ^*TX)\right)}{\re\left(H^0 (\PP ^1, f_{e}^*TX ) \right)}   \\
\label{3} \prod _{ \mathrm{flags} (v,e)}   \re\left(T_{\sigma _v}X\right) .   \end{align}
The moving part in \eqref{2} and the factor \eqref{3} contribute only pure weights, equal to the analogous contributions in the stable map case.

Finally, we analyze $H^\bullet (C_v, Q)$. Each $\cL_\rho$ has a unique expression 
 $$\cL _\rho = \otimes _{\rho ' \not\subset \sigma _v}\cL _{\rho '} ^{ \otimes a_{v, \rho, \rho ' }},$$ 
 where $a_{v, \rho, \rho ' }$ are integers. Then $\cL_\rho=\cO_{C_v}( \hat{\bf{x}}_\rho)$, where
 $ \hat{\bf{x}}_\rho$ is the divisor $ \sum _{\rho ' \not\subset \sigma _v} a_{v, \rho, \rho ' } (\sum _{i\in d_{\rho' , v}} \hat{x}_i ) $ on $C_v$. It follows that
 $$Q|_{C_v}=\left(\oplus_{\rho \subset \sigma _v} \cO_{C_v}( \hat{\bf{x}}_\rho)\right)\oplus \left(\oplus_{\rho \not\subset \sigma _v} \cO_{C_v}( \hat{\bf{x}}_\rho)| _{\hat{\bf{x}}_\rho}\right).$$
 The second term has trivial linearization and its $H^0$ gives the fixed part of $H^\bullet (C_v, Q)$. As for the first term, for each $\rho \subset \sigma _v$ let
 $$\mathrm{Orb}_{\sigma _v, \rho }$$ denote the one dimensional $\T$-orbit corresponding to the $(n-1)$-dimensional cone spanned
 by  $\sigma _v \setminus \{\rho\}$ (this orbit need not be closed),
 so that
 $$T_{\sigma_v}X=\oplus_{\rho \subset \sigma _v}T_{\sigma_v}\mathrm{Orb}_{\sigma _v, \rho }$$
 as $\bT$-representations. The linearization of $\cL_\rho=\cO_{C_v}( \hat{\bf{x}}_\rho)$ is given by 
 the weight of the representation $T_{\sigma_v}\mathrm{Orb}_{\sigma _v, \rho }$. 
Write $\hat{{\bf x}}_\rho=\hat{ {\bf x}}_\rho^+ -\hat{{\bf x}}_\rho^-$, with $\hat{{\bf x}}_\rho^+$ and $\hat{{\bf x}}_\rho^-$ effective divisors. From the equality
 
$$[\cO_{C_v}( \hat{\bf{x}}_\rho)]=[\cO_{C_v}( \hat{\bf{x}}_\rho^+)| _{\hat{\bf{x}}_\rho^+}]-[\cO_{C_v}( \hat{\bf{x}}_\rho^+)| _{\hat{\bf{x}}_\rho^-}]+[\cO_{C_v}]
$$
in the $K$-group of $C_v$,
 we obtain the contribution of the moving part of (\ref{1}) to the inverse Euler class in the form
 \begin{align}\label{half vertex} \frac{\re(\mathbb{E} ^\vee \ot T_{\sigma _v}X)}{\re (T_{\sigma_v}X)}
 \frac{ \prod _{\rho \subset \sigma _v } \re(H^0 (C_v, \cO_{C_v}( \hat{\bf{x}}_\rho^+)|_{\hat{\bf{x}}_\rho^-}) \ot T_{\sigma _v}\mathrm{Orb}_{\sigma _v, \rho})}
 { \prod _{\rho \subset \sigma _v } \re(H^0 (C_v, \cO_{C_v}( \hat{\bf{x}}_\rho^+)|_{\hat{\bf{x}}_\rho^+}) \ot T_{\sigma _v}\mathrm{Orb}_{\sigma _v, \rho}) }.
 \end{align}

Combining all these,  we obtain the desired factorization 
\begin{align}\label{qmapcont} \frac{1}{\re(N^{\mathrm{vir}}_{\Gamma'})} =
 \prod _{v}  \mathrm{QmapCont}(v) \prod  _{e} \mathrm{QmapCont}(e) \prod _{(v, e)} \mathrm{QmapCont}(v, e).   \end{align}
 
 The contributions from edges and flags are pure weights and match the corresponding factors in \eqref{mapcont}:
 \begin{equation}\label{matching} \mathrm{QmapCont}(e)=\mathrm{MapCont}(e),\;\;\;  \mathrm{QmapCont}(v,e)=\mathrm{MapCont}(v,e).
 \end{equation}
 The same is true about the contributions from degenerate vertices.
 
 By \eqref{domain cont} and \eqref{half vertex}, the contribution from a non-degenerate vertex is
\begin{align}  \mathrm{QmapCont}(v) & = \frac{\re(\mathbb{E} ^\vee \ot T_{\sigma _v}X)}{\re (T_{\sigma_v}X)}\frac{1}{\prod _{e} \frac{w(e)}{d_e} - \psi _{e} }
\mathrm{Q}_{v}, \label{local}\end{align}
with
\begin{align}\label{Qv}
  \mathrm{Q}_{v}
& =\frac{\prod _{\rho \subset \sigma _v } \re(H^0 (C_v, \cO_{C_v}( \hat{\bf{x}}_\rho^+)|_{\hat{\bf{x}}_\rho^-}) \ot T_{\sigma _v}\mathrm{Orb}_{\sigma _v, \rho})}
{ \prod _{\rho \subset \sigma _v }\re( H^0 (C_v, \cO_{C_v}( \hat{\bf{x}}_\rho^+)|_{\hat{\bf{x}}_\rho^+}) \ot T_{\sigma _v}\mathrm{Orb}_{\sigma _v, \rho}) }.\end{align}

We conclude this subsection by noting that the identification of the fixed part in $H^\bullet (C_v, Q)$ with
$H^0(C_v, \oplus_{\rho\not\subset\sigma_v} \cO_{C_v}(\sum_{i\in d_{\rho,v}}\hat{x}_i)|_{\sum_{i\in d_{\rho,v}}\hat{x}_i})$
shows that the virtual fundamental class of $F_{\Gamma'}$ induced by the fixed part of the absolute obstruction theory is the fundamental class itself.

\subsection{Localization}\label{localization} Fix $(g, k, \beta)$ with $2g-2+k\geq 0$. Let $\mathrm{Q}_{g,k}(\PP^N, d(\beta))$ be the moduli space of $(0+)$-stable quasimaps 
of degree $d(\beta)$ to $\PP^N_{X_0}$. 
Denote 
$$
 b\circ c \circ \iota_\infty := ( \coprod_{A,\beta_0,\beta_a} b_{\{\beta_a\}}^{0+}\circ c^{0+} \circ \iota_\infty ).
 $$
We describe next the induced maps $(b\circ c\circ \iota_{\infty} )^{\T}$ and $\iota _{0+}^{\T} $ on the $\T$-fixed loci. 

Let $p_{\sigma}\in X^{\T}\subset (\PP ^N)^{\T}$ be a $\T$-fixed point, corresponding to a maximal cone $\sigma$.

The locus $F_{p_\sigma}$ of $\T$-fixed points in $\mathrm{Q}_{g, k} (\PP ^N, d(\beta) )$ which are supported at $p_{\sigma}$
(i.e., those for which the regular map $f_{reg}:C\lra \PP^N$ induced by the quasimap is a constant map to $p_\sigma$) is parametrized by 
the quotient of the mixed moduli space 
$\overline{M}_{g, k | d(\beta)}$ by the action of the symmetric group $S_{d(\beta)}$ permuting the second kind of markings.

Consider first (for all $A$ and all splittings $\beta=\beta_0+\sum_{a\in A}\beta_a$) 
the components $$F_\Gamma\subset\overline{M}_{g, k\cup A}(X, \beta_0 )$$ of the fixed point loci which are mapped into $F_{p_\sigma}$ 
by $b\circ c\circ \iota_{\infty}$.
They correspond to graphs $\Gamma=(V,E)$ of the following type:
\begin{itemize}
\item the vertex set $V=\{v_0\}\cup V'$ contains a distinguished vertex $v_0$ with $\sigma_{v_0}=\sigma$;
\item the distinguished vertex has genus $g_{v_0}=g$, all other vertices have genus $g_v=0$ and $\Gamma$ has no cycles;
\item the markings in $[k]$ are all assigned to the distinguished vertex $v_0$, while there is no restriction on the assignement of the markings in $A$.
\end{itemize}

Let $E_0\subset E$ denote the subset of edges in $\Gamma$ incident to the distinguished vertex $v_0$. Each such edge is the root of a tree $T_e$ such that the graph $\Gamma$ is the join of
all the $T_e$'s at $v_0$. 
Let $A(v_0)$, respectively $A(e)$, 
denote the subsets of markings from $A$ at $v_0$, respectively at the vertices of $T_e$. Each $T_e$
parametrizes  $\T$-fixed 
genus zero stable maps to $X$ with markings $A(e)\cup\bullet$, which send the marking $\bullet$ to $p_\sigma$. A stable map in $F_\Gamma$ is obtained by gluing these to 
a stable curve in
$ \overline{M}_{g,k\cup A(v_0)\cup E_0}$.
We denote by $\beta_e$ the homology class carried by a stable map parametrized by the tree $T_e$ with root $e\in E_0$, so that $\beta_0=\sum_{e\in E_0}\beta_e$.

The map $(b\circ c\circ \iota_{\infty} )^{\T}$ restricted to $F_\Gamma$ contracts each tree $T_e$  into a base-point of length $d(\beta_e)+\sum_{a\in A(e)}d(\beta_a)$, and 
replaces each marking 
$a\in A(v_0)$ by a base-point
of length $d(\beta_a)$. It follows that, up to finite quotients,
 $(b\circ c\circ \iota_{\infty} )^{\T}$ on $F_\Gamma$
equals the composition 
\begin{equation}\label{ciT} g_\Gamma\circ p : \overline{M}_{g,k\cup A(v_0)\cup E_0}\times (\prod_{v\neq v_0}\overline{M}_{0,\mathrm{val}(v)})\lra \overline{M}_{g, k| d(\beta)},
\end{equation}
where
$$p : \overline{M}_{g,k\cup A(v_0)\cup E_0}\times (\prod_{v\neq v_0}\overline{M}_{0,\mathrm{val}(v)})\lra \overline{M}_{g,k\cup A(v_0)\cup E_0}$$
is the projection and
$$g_\Gamma: \overline{M}_{g,k\cup A(v_0)\cup E_0}\lra \overline{M}_{g,k|A(v_0)\cup E_0}\lra \overline{M}_{g, k| d(\beta)}, $$
with the first arrow the Hassett contraction map,
and the second arrow a composition of diagonal maps increasing the multiplicity of markings $e\in E_0$ by $d(\beta_e)+\sum_{a\in A(e)}d(\beta_a)$ and of markings $a\in A(v_0)$ by $d(\beta_a)$.

Next we look at components $F_{\Gamma'}\subset \QmapX$ mapped to $F_{p_\sigma}$ by $\iota _{0+}^{\T} $. Since rational tails are not allowed, 
the graphs $\Gamma'$ have a single vertex $v_0$ and no edges. The vertex is decorated by $\sigma_{v_0}=\sigma$, $g_{v_0}=g$, all markings in $[k]$, and a set of 
integers $\{d_{\rho, v_0}; \rho \not\subset \sigma\}$. The locus $F_{\Gamma'}$ is isomorphic to the quotient 
$$\left(\overline{M}_{g, k | \sum _{\rho \not\subset \sigma }  d_{\rho, v_0}}\right ) / \prod_{\rho \not\subset \sigma }  S_{d_{\rho, v_0}},$$  
where
the product of symmetric groups $\prod_{\rho \not\subset \sigma }  S_{d_{\rho, v_0}}$ acts by permuting
the second kind of markings in the obvious way.

In terms of the basis $\{\xi_\rho\; |\; \rho \not\subset \sigma\}$, the linearization $\theta$ is expressed as
$$\theta=\sum_{\rho \not\subset \sigma} n_{\rho,\sigma}\xi_\rho,$$
with {\it positive} integers $n_{\rho, \sigma}$. 

The restriction of $\iota _{0+}^{\T} $ to $F_{\Gamma'}$ is descended to the corresponding quotients by symmetric groups from the composition of diagonal maps
\begin{equation}\label{iT} \overline{M}_{g, k | \sum _{\rho \not\subset \sigma }  d_{\rho, v_0}}\lra \overline{M}_{g, k| d(\beta)}\end{equation}
which increase the multiplicity of each point in $d_{\rho, v_0}$ by the factor $n_{\rho,\sigma}$. 

For general graphs $\Gamma$ and $\Gamma'$ for which the genus and/or the markings in $[k]$ are distributed among several vertices, 
the map $(b\circ c\circ \iota_{\infty} )^{\T}$ is a product of maps of 
type \eqref{ciT} and $\iota _{0+}^{\T} $ is a product of maps of 
type \eqref{iT}.

We apply the virtual localization formula to the $(0+)$-shifted stable map virtual class:
\begin{equation}\label{stable map residues}
\begin{split} 
&\sum_{A,\beta_0+\beta_a=\beta} \frac{1}{|A|!}[ \overline{M}_{g, k\cup A}(X, \beta_0 ) ]^{\mathrm{vir}}\cap ev_A^* (\otimes_a [I_1]_{\beta_a}) = \\
& \sum_{A,\beta_0+\beta_a=\beta}  \frac{1}{|A|!}i_* \sum _{\Gamma} \frac{[F_\Gamma] \cap i^* ev_A^* (\otimes_a [I_1]_{\beta_a})}{\re(N_\Gamma^{\mathrm{vir}})}.\end{split}
\end{equation}
Similarly, 
\begin{equation}\label{quasimap residues}
[ \QmapX]^{\mathrm{vir}} =i_* \sum _{\Gamma'} \frac{[F_{\Gamma'}]}{\re(N_{\Gamma'}^{\mathrm{vir}})}.
\end{equation}
In both formulas $i$ denotes the inclusion of the $\T$-fixed loci.
In the stable map formula \eqref{stable map residues},
the restrictions  $i^* ev_a^* [I_1]_{\beta_a}$ contribute pure weight factors $\mathrm{cont}(a)$
to the vertices. 

First, we write the vertex contribution as
\begin{align}\label{vertexmapcont1}
\mathrm{MapCont}(v)= \frac{\re(\mathbb{E} ^\vee \ot T_{\sigma _v}X)}{\re (T_{\sigma_v}X)}\frac{1}{\prod _{e\;\text{not collapsed}} \frac{w(e)}{\delta_e} - \psi _{e} } M_v,
\end{align}
where the factor $M_v$ is the product of $1/(\frac{w(e)}{\delta_e} - \psi _{e}) $ over the collapsed edges incident to $v$ and of the contributions from the $A$-markings at $v$.
Next, as in 
\cite {MOP}, for each non-degenerate vertex $v$ 
the factor $M_v$ absorbs the contributions in \eqref{mapcont} coming from all edges and vertices (including their $A$-markings)  of all trees $T_e$ which are collapsed
to $v$ by the map $(b\circ c\circ \iota_{\infty} )^{\T}$. The final form of the vertex contribution is then given by \eqref{vertexmapcont1} with $M_v$ of the form
\begin{equation}\label{Mv}
M_v=\prod _{e\;\text{collapsed to}\; v} \frac{\mathrm{cont}(T_e)}{\frac{w(e)}{\delta_e} - \psi _{e} } \prod_{a\in A(v)} {\mathrm{cont}(a)}.
\end{equation}

We now compare \eqref{vertexmapcont1} with the quasimap vertex contribution
$$\mathrm{QmapCont}(v) = \frac{\re(\mathbb{E} ^\vee \ot T_{\sigma _v}X)}{\re (T_{\sigma_v}X)}\frac{1}{\prod _{e} \frac{w(e)}{d_e} - \psi _{e} }
\mathrm{Q}_{v}$$ from
\eqref{local}.
The first two factors in each formula are pulled back via $(b\circ c\circ \iota_{\infty} )^{\T}$ and $\iota _{0+}^{\T} $ respectively. 
From the projection formula and the matching of the pure weight contributions from 
non-collapsed edges and flags in \eqref{matching}, we conclude that Theorem  \ref{ClassThm} follows from the following Lemma.

\begin{Lemma}\label{vertexmatching} For each $\T$-fixed locus $F_{p_\sigma}\subset \mathrm{Q} _{g, k} (\PP ^N, d(\beta) )^{\T}$ as described in this subsection, the equality 
\begin{equation}\label{eqvertexmatching}\sum_{A,\beta_0+\beta_a=\beta} \frac{1}{|A|!}\sum_{F_{\Gamma} \mapsto F_{p_\sigma}}(b\circ c\circ \iota_{\infty} )_*^{\T}M_{v_0}
=\sum_{F_{\Gamma'} \mapsto F_{p_\sigma}}(\iota _{0+}^{\T})_*\mathrm{Q}_{v_0}
\end{equation}
holds 
in $H^*_{\T , loc} (\overline{M}_{g, k | d(\beta) }/S_{d(\beta)})$. 
\end{Lemma}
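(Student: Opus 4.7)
The plan is to prove \eqref{eqvertexmatching} in three steps.

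\textbf{Step 1 (Reduction to genus zero).} The Hodge contribution $\re(\mathbb{E}^\vee\otimes T_{\sigma_{v_0}}X)/\re(T_{\sigma_{v_0}}X)$ appears identically in both $\mathrm{MapCont}(v_0)$ of \eqref{vertexmapcont1} and $\mathrm{QmapCont}(v_0)$ of \eqref{local}, and after factoring it out the remaining terms $M_{v_0}$ and $Q_{v_0}$ are pure weights coupled to $\psi$-classes on the Hassett moduli space. The genus-reduction lemma of \cite{MOP} then reduces the proof to $g=0$: all the ``interesting'' data at $v_0$ is intrinsically genus zero, and the lemma propagates the $g=0$ matching to all $g$.

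\textbf{Step 2 (Multiplicativity and reduction to a total-degree identity).} At $g=0$, both sides factor: $Q_{v_0}$ is a product over the rays $\rho\subset\sigma_{v_0}$ as in \eqref{Qv}, while $M_{v_0}$ is a product over the trees $T_e$ collapsed to $v_0$ and the $A$-markings carried by $v_0$ as in \eqref{Mv}. The Hassett contraction \eqref{ciT} and the diagonals \eqref{iT} respect these factorizations on the ambient space $\overline{M}_{g,k|d(\beta)}/S_{d(\beta)}$. Matching generating series in $q$ then reduces \eqref{eqvertexmatching} to a single identity, for each class $\beta$ and each fixed point $p_\sigma$: the sum of contributions from all trees of total class $\beta$ rooted at $p_\sigma$, together with all $A$-marking insertions of $[I_1]_{\beta_a}|_{p_\sigma}$, equals the intrinsic local weight $Q_{v_0}$ arising from a single length-$d(\beta)$ base-point configuration at $p_\sigma$.

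\textbf{Step 3 ($\CC^*$-localization on the unpointed graph space).} Prove the reduced equality via the graph space $QG^{0+}_{0,0,\beta}(X)$ with its parametrized $\PP^1$ and induced $\CC^*$-action. By definition of the small $I$-function, the coefficient $[I_1]_\beta|_{p_\sigma}$ is the residue over the distinguished $\CC^*$-fixed component of this graph space where the map is constant equal to $p_\sigma$ and all base-points are concentrated at a single point of $\PP^1$. A direct calculation on this fixed locus --- applying the normalization sequence along $\PP^1$ joined to the base-point chain, exactly as in the derivation \eqref{Euler sequence}--\eqref{Qv} of $Q_{v_0}$ --- yields precisely the local weight $Q_{v_0}$. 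The tree and $A$-marking contributions on the map side correspond, by the same localization, to the other $\CC^*$-fixed strata of the graph space, and the two residue collections add up to the same total. A localized form of Givental's uniqueness argument, as used in \cite{CK0}, packages this comparison cleanly.

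\textbf{Main obstacle.} The main difficulty lies in the combinatorial bookkeeping in Steps 2--3: the symmetric-group automorphism $\prod_{\rho\not\subset\sigma} S_{d_{\rho,v_0}}$ of $F_{\Gamma'}$, the $1/|A|!$ on the map side, the multipliers $n_{\rho,\sigma}$ in the diagonals \eqref{iT} which scale base-point multiplicities, the edge weights $w(e)/\delta_e - \psi_e$ from collapsed edges, the automorphism factors attached to graphs $\Gamma$, and the $S_{d(\beta)}$-quotient on the ambient space must all be reconciled to produce the equality in $H^*_{\T,\mathrm{loc}}(\overline{M}_{g,k|d(\beta)}/S_{d(\beta)})$. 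Once these combinatorics match, the equality of pure-weight factors is a direct consequence of the normalization-sequence computation in Step 3.
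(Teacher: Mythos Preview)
Your Step 1 is essentially correct and matches the paper: both sides, after removing the common Hodge factor, become polynomials $P^\infty_{\beta,\sigma}$ and $P^{0+}_{\beta,\sigma}$ in the classes $\hat{\psi}_j$ and $D_J$ on $\overline{M}_{g,k|d(\beta)}$, in canonical form, with coefficients independent of $g$ and $k$. This is the content of the MOP cotangent calculus invoked in \S\ref{MOP Lemma}.

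However, Steps 2 and 3 contain a genuine gap. The equality \eqref{eqvertexmatching} is an equality of \emph{cohomology classes} on $\overline{M}_{g,k|d(\beta)}/S_{d(\beta)}$, not of numbers. The ``factorization'' you assert in Step 2 does not hold in any sense that would allow the problem to be checked term by term: $Q_{v_0}$ in \eqref{Qv} mixes all the base-point markings through the divisors $\hat{\bf x}_\rho^\pm$, and the push-forwards \eqref{ciT}, \eqref{iT} involve diagonal maps that entangle the $\hat{\psi}_j$'s and $D_J$'s. There is no reduction to ``a single identity'' of pure weights per $(\beta,\sigma)$; what must be shown is that two abstract polynomials in $\hat{\psi}_j,D_J$ coincide.

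Consequently, Step 3 using only the \emph{unpointed} graph space $QG^{0+}_{0,0,\beta}(X)$ is insufficient. That space yields exactly the $B=\emptyset$ case---the equality $i_\sigma^*I_\beta(z)=i_\sigma^*[J(q,I_1,z)]_\beta$, i.e., Givental's mirror theorem localized at $p_\sigma$---which is a single element of $\QQ(\{\lambda_\rho\})[[1/z]]$, not the full polynomial identity $P^\infty_{\beta,\sigma}=P^{0+}_{\beta,\sigma}$. The paper instead invokes the MOP Lemma (Lemma \ref{MOPlemma}): two symmetric canonical-form polynomials agree as abstract polynomials if and only if their integrals against all $\psi$-monomials over all chain-type strata $S(\tau,\cP)\subset\overline{M}_{0,k|d(\beta)}$ agree, for all $k\geq 3$. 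This is Lemma \ref{independence}. Its proof requires the Localized Uniqueness Lemma (Lemma \ref{uniqueness lemma}), established via $\CC^*$-localization on the \emph{pointed} graph spaces $QG_{0,k,\beta}(X)$ and $\overline{M}G_{0,k,\beta}(X)$ for all $k\geq 0$, together with a Splitting Lemma (Lemma \ref{Splitting Lemma}) and induction on $\beta$ to pass from the full stratum to smaller ones. The inductive mechanism in the Uniqueness Lemma---comparing factorized residues at $z$ and $-z$ and using that the difference must be pole-free---is precisely what forces the descendant integrals \eqref{descendant} for nonempty $B$ to match, and this step cannot be bypassed.
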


We will prove the lemma in two steps. First we use \S7.6 of \cite{MOP} to reduce to a statement in genus zero. 
The genus zero case is then handled by proving a localized version of Givental's Uniqueness Lemma and an inductive argument.

\subsection{MOP Lemma and reduction to genus zero}\label {MOP Lemma}

The mixed moduli spaces $\overline{M}_{g, k | d}$ carry cotangent line classes $\psi_i,\; i=1,\dots ,k$,  and $\hat{\psi}_j,\; j=1,\dots d$.
In addition, there are diagonal classes $$D_J\in H^{2(|J|-1)}(\overline{M}_{g, k | d},\QQ)$$ for $J\subset\{1,\dots, d\}$ with $|J|\neq \emptyset$, corresponding to the 
locus where the markings $\{\hat{x}_j\}_{j\in J}$ coincide.

By the cotangent calculus in \cite{MOP},
each side of \eqref {eqvertexmatching} can be written as a polynomial expression in $\hat{\psi} _j$ and $D_J$ (with coefficients in the field $K=\QQ(\{\lambda_\rho, \rho \in\Sigma(1)\})$, 
depending on $\sigma$ and $\beta$, but {\it independent}
on the genus $g$ and the number $k$ of usual markings. For the left-hand side, this follows from \S 4.3 of \cite{MOP} and the description \eqref{ciT} of $b\circ c\circ \iota_{\infty}$, 
while for the right-hand side we use the formula \eqref{Qv} for $\mathrm{Q}_v$ and \S4.6 in \cite{MOP}, together with \eqref{iT}.

Furthermore, these two polynomials are symmetric in the variables $\hat{\psi} _j$ and may be written in {\it canonical  form} as in \S4.4 of \cite{MOP}. This means that each monomial is rewritten in the form 
$$\hat{\psi}_{J_1}^{s_1}\hat{\psi}_{J_2}^{s_2}\dots \hat{\psi}_{J_l}^{s_l}D_{J_1}D_{J_2}\dots D_{J_l}$$ 
with $J_i$ mutually disjoint and 
$$\hat{\psi}_{J_i}=\hat{\psi}_j|_{D_{J_i}},\;\; \forall j\in J_i.$$
The canonical forms are also symmetric in the $\hat{\psi} _j$'s.
Denote by $P^{\infty}_{\beta , \sigma }$ the canonical form of the left-hand side of \eqref {eqvertexmatching}, and by $P^{0+}_{\beta , \sigma }$  the canonical form of
the right-hand side of \eqref {eqvertexmatching}.
We will show that $P^\ke_{\beta _v, \sigma _v}$
as an {\em abstract polynomial} does not depend on $\ke\in\{0+,\infty\}$.

Fix $k\geq 3$, $d\geq 0$, and $1\leq \ell\leq k-2$.
Let $$\cP=(\cP _1, ..., \cP _\ell) $$ be a set partition of $\{1,..., d\}$ with $|\cP _i|\ge 1$.
Let $\tau:=(t _1, ..., t_\ell)$ be an ordered partition of $k-2-\ell$ (the integers $t_i$ are nonnegative, but may be zero).  Following [MOP, Lemma 6],
we associate to the above data a chain-type topological stratum $S(\tau, \cP)$
on $\oM _{0, k | d}$. When $\ell\geq 2$, the generic element in the stratum has $\ell$ irreducible components,
$R_1,\dots, R_\ell$, attached tail to head in a chain of rational curves, with the markings distributed as follows:
\begin{itemize}
\item $R_1$ carries $t_1+2$ markings and the base-point markings in $\cP_1$.

\item $R_i$ carries $t_i+1$ markings and the base-point markings in $\cP_i$, for $i=2, ...,\ell-1$.

\item $R_\ell$ carries $t_\ell +2$ markings the base-point markings in $\cP_\ell$ . 
\end{itemize}
The usual markings $x_1,\dots x_k$ are distributed in order from left to right.
When $\ell=1$, the stratum is simply the entire space $\oM _{0, k | d}$.

\begin{Lemma}\label{MOPlemma} {\em [MOP, \S 7.6]} Fix an integer $d>0$. Consider formal variables $\hat{\psi} _{j}$ for $j=1,\dots,d$ and
$D_J$ for nonempty $J\subset \{1, ..., d\}$.
Let $\Delta$  be a polynomial in  $\hat{\psi} _{j}$, $D_J$, in canonical form. For every $k\ge 3$, we may view $\Delta$ as a class in $H^*_{\T, \mathrm{loc}}(\overline{M}_{0, k| d})$.
Then $\Delta=0$ as an abstract polynomial if and only if 
$$\int _{S(\tau, \cP)} \mu (\psi _1, ..., \psi _k ) \Delta =0$$ 
for every topological stratum $S(\tau, \cP)$ as above and every monomial $\mu$ in $\psi _i$.
Further, if $\Delta$ is symmetric in the $\hat{\psi} _{j}$'s, then only vanishing of the integral on $\overline{M}_{0, k| d}/S_d$ is required to conclude that $\Delta$ vanishes as an abstract polynomial.
\end{Lemma}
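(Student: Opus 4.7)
The forward direction is immediate. For the converse, I view the space of canonical polynomials in $\hat\psi_j$ and $D_J$ as a free $K$-module with basis given by the canonical monomials $M = \hat\psi_{J_1}^{s_1}\cdots\hat\psi_{J_\ell}^{s_\ell}D_{J_1}\cdots D_{J_\ell}$, and regard the test integrals $\int_{S(\tau,\cP)}\mu\cdot(\cdot)$ as linear functionals on this module. The plan is to show that this family of functionals separates points: after choosing a suitable well-order on canonical monomials (say, lexicographic in the sorted tuple $(|J_1|,\dots,|J_\ell|;s_1,\dots,s_\ell)$), the matrix of pairings is upper-triangular with nonzero diagonal entries.

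For the diagonal step, given a canonical monomial $M$, I construct the test data as follows: let $\cP = (\cP_1,\dots,\cP_\ell)$ be a set partition of $\{1,\dots,d\}$ in which each $J_i$ appears as one of the parts, padded by singleton parts if necessary so that the chain stratum $S(\tau,\cP)$ is defined; distribute the markings $x_1,\dots,x_k$ among the components via $\tau$; and choose a monomial $\mu$ in $\psi_1,\dots,\psi_k$ of the appropriate top degree. On such a stratum, any diagonal $D_J$ whose set $J$ is not contained in a single $\cP_i$ restricts to zero, while the cotangent class $\hat\psi_j$ for $j\in \cP_i$ restricts to the $\psi$-class at the collision point on the rational component $R_i\simeq \oM_{0,*}$. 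The restriction of $M$ to $S(\tau,\cP)$ then factors as a product of standard cotangent-class monomials on the chain of $R_i$'s, and $\tau$, $\mu$ can be tuned so that each factor is a nonzero top-dimensional integral on its $R_i$.

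The main obstacle is establishing the upper-triangularity: one must control the contributions from canonical monomials $M'$ distinct from $M$, namely those whose index sets each sit inside a single part of $\cP$. Using the cotangent calculus on $\oM_{0,k|d}$ developed in [MOP, \S 4] --- in particular the self-intersection identity for the $D_J$ and the normal-bundle formulas along the chain --- one verifies that any such contribution is expressible, after reduction to canonical form, in terms of monomials strictly earlier in the chosen order. Descending induction then yields $\Delta \equiv 0$ as an abstract polynomial. Finally, the symmetric refinement follows because the $S_d$-action permutes both the strata $S(\tau,\cP)$ and the canonical monomials compatibly within its orbits, so an $S_d$-invariant $\Delta$ is already detected by testing on the quotient $\oM_{0,k|d}/S_d$.
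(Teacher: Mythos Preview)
The paper does not give a self-contained proof of this lemma: it simply cites \cite{MOP}, Lemma 6, and remarks that the argument there already shows the sharper version with only chain-type strata $S(\tau,\cP)$ needed. So your proposal is being compared against a citation, not against an independent proof in this paper.

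Your outline is in the right spirit --- the point of \cite{MOP}, \S7.6 is indeed a separation-of-points argument via carefully chosen strata and $\psi$-monomials, with an induction to peel off the leading canonical monomial --- but as written it has a real gap at the crucial step. You acknowledge that ``the main obstacle is establishing the upper-triangularity,'' and then dispose of it by asserting that the cotangent calculus of \cite{MOP}, \S4 reduces the off-diagonal contributions to earlier monomials. That is precisely where all the content lies, and it does not follow from the ordering you propose. For example, with your lexicographic order on sorted tuples $(|J_1|,\dots;s_1,\dots)$, monomials such as $\hat\psi_{J}^{s'}D_J$ with $s'>s$ are \emph{later} than $\hat\psi_J^sD_J$ yet do not restrict to zero on the stratum built from $J$; you must instead use the dimension constraint from the specific choice of $\mu$ to kill them, and this interaction between the ordering, the degree, and the choice of $(\tau,\mu)$ is what the \cite{MOP} argument actually carries out. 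Without making that precise, your ``descending induction'' does not get off the ground.

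In short: your structural outline matches the cited argument, but the triangularity step is asserted rather than proved, and the ordering you wrote down is not quite the one that makes the induction work. Since the paper itself defers to \cite{MOP} here, the honest route is either to do the same, or to spell out the induction in full following \cite{MOP}, \S7.6 rather than gesture at it.
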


\begin{proof}
This is essentially Lemma 6 in \cite{MOP}. Their statement is formulated to require the vanishing of the integrals over all possible topological strata in $\overline{M}_{0, k| d}$, however, the proof they give shows that it suffices to consider only strata of the form $S(\tau, \cP)$. (Moreover, one may also restrict to a very special kind of monomials 
$ \mu (\psi _1, ..., \psi _k )$, but we will not need this fact.)
\end{proof}

Applying Lemma \ref{MOPlemma} to $\Delta=P^\infty _{\beta , \sigma }-P^{0+} _{\beta , \sigma }$, the proof of Lemma \ref{vertexmatching} is reduced to proving the following Lemma.

\begin{Lemma}\label{independence} For every $k\ge 3$, every chain-type topological stratum $S(\tau, \cP)\subset \overline{M}_{0, k| d(\beta)}$, and every 
polynomial $$\mu(\psi)=\mu(\psi _1, ..., \psi _k)$$ with coefficients in the
field $K=H^*_{\T, loc}(\Spec \CC)$, the genus zero intersection number
\begin{equation}\label{ind}  P_{\beta , \sigma }^{ \ke}(k,\tau, \cP, \mu )
:= \int _ {S(\tau, \cP)} P^\ke _{\beta , \sigma }(\hat{\psi} _j , D_J) \mu (\psi )   \end{equation}
does not depend on $\ke$.
\end{Lemma}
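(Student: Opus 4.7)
The plan is to interpret both sides of \eqref{ind} as genus zero $\T$-equivariant descendant invariants of $X$ localized at the fixed point $p_\sigma$, and then derive the equality from the genus zero wall-crossing results established in Section 3 together with a $\CC^*$-graph-space localization argument.

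First, I would exploit the product structure of the chain-type stratum $S(\tau, \cP) \simeq R_1 \times \cdots \times R_\ell$ (up to diagonal identifications at the joining nodes) to factor the integral. In the canonical form of $P^\ke_{\beta, \sigma}$, each monomial $\hat\psi^{s_1}_{J_1}\cdots \hat\psi^{s_\ell}_{J_\ell} D_{J_1}\cdots D_{J_\ell}$ distributes across the chain components; combined with the $\psi$-insertions from $\mu$ and the diagonal gluing classes at the chain nodes, this writes $\int_{S(\tau,\cP)} P^\ke_{\beta,\sigma}\,\mu(\psi)$ as a product of one-component integrals on $\overline{M}_{0, n_i \mid d_i}$, one for each $R_i$, with $d_i = \sum_{j\in \cP_i}$ degrees and $n_i$ tallying the usual markings on $R_i$ together with the nodes.

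Second, I would recognize each per-component factor as a localization residue at $p_\sigma$ of a genus zero $\ke$-invariant of $X$. For $\ke = 0+$ this is a quasimap invariant at $p_\sigma$, whose generating series as $d_i$ varies is controlled by the big $J^{0+}$-function evaluated at $t=0$. For $\ke = \infty$, the base-point markings in $\cP_i$ carry the classes $[I_1]_{\beta_a}$ from the definition \eqref{shifted class} of the shifted virtual class, and summing over all distributions of their degrees reproduces the big $J^\infty$-function with $t$ specialized to $I_1(q)$. Since $X$ is semi-positive toric we have $J_0^\ke = 1$, so Proposition \ref{summary}(iii) gives the identification of these per-component one-point generating series at the fixed point $p_\sigma$.

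Third, to promote the one-point matching to the multi-point descendants actually appearing (because $\mu(\psi)$ spreads $\psi$-classes across all usual markings on $R_i$ and additional $\hat\psi$'s remain), I would invoke a localized version of Givental's uniqueness lemma together with Theorem \ref{genus zero intro}: since $\T$ acts on $X$ with isolated fixed points, the genus zero wall-crossing holds for arbitrary descendant insertions, and the localized uniqueness statement lets one reconstruct multi-point descendants at $p_\sigma$ from the $J^\ke$-data. Multiplying matched factors across the chain then finishes the proof. The main obstacle I expect is the second step: carefully matching the stable-map shifting procedure (insertion of $[I_1]_{\beta_a}$ at extra markings followed by the tree contraction $b\circ c\circ \iota_\infty$) with the quasimap residue \eqref{Qv} (which directly encodes base-point multiplicities through the divisors $\hat{\bf x}_\rho^\pm$). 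Showing that these two bookkeepings give the same formal series in the localized cohomology at $p_\sigma$ before integration is the combinatorial heart of the argument, and I anticipate an induction on the number $\ell$ of chain components or on the total degree $d(\beta)$ to close it.
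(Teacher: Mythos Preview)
Your overall architecture is close to the paper's, but there are two genuine gaps.

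First, your factorization in step one is too naive. The integral over a chain stratum does not simply split as a single product over the components $R_i$; rather, as the paper's Splitting Lemma (Lemma~\ref{Splitting Lemma}) shows, it is a \emph{sum} over all decompositions $(\beta',\beta'')\in R(\cP,\beta)$ of $\beta$ compatible with the degree constraint $d(\beta')=d'$, $d(\beta'')=d''$, each summand being a product of the two sub-stratum integrals. This sum arises because the map $d(\cdot)$ is not injective on effective classes, so a fixed partition $\cP$ of base-point degrees can come from several curve-class splittings. Without this sum the induction does not close.

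Second, and more seriously, your third step contains the circularity the paper explicitly warns against. You propose to deduce the \emph{local} equality at the single fixed point $p_\sigma$ of the multi-point descendant contributions from the global genus zero wall-crossing of Theorem~\ref{genus zero intro}. But as the paper notes just after \eqref{series infty}, the equality of the global series \eqref{series 0+} and \eqref{series infty} does \emph{not} directly imply the equality of their $p_\sigma$-localized pieces when $B\neq\emptyset$, because the global series is a sum over all fixed points and there could be cancellation. The paper's Localized Uniqueness Lemma (Lemma~\ref{uniqueness lemma}) is proved by an independent analytic argument: one forms the quantity $D^{\mu,\ke}_{k,\beta,\sigma}$ on the graph space, observes it has no poles in $z$ since it is defined without $\CC^*$-localization, applies the $\CC^*$-factorization, and then uses the pole-free condition together with induction on $\beta$ and $k$ to force the difference $\Delta$ to vanish. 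This polynomiality-in-$z$ argument is the genuine engine; invoking Theorem~\ref{genus zero intro} here is not a substitute for it.
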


\subsection{Uniqueness Lemma}\label{uniqueness}

Up to this point, the argument for proving Theorem \ref{ClassThm} has been entirely parallel to the one given in \cite{MOP} for the analogous
statement in the case of Grassmannian targets. In their situation no shifting of virtual classes is needed, 
and the $\ke$-independence of \eqref{ind} is an immediate consequence of the fact that in genus zero the 
moduli spaces of stable maps and stable quasimaps to the Grassmannian are smooth and irreducible, of the expected dimension. 

The latter property fails for our toric targets. Even for the Fano cases, which do not require the shifting of virtual classes,
a new idea is needed to complete the proof.
To this end we will use graph spaces and localization with respect to their additional $\CC^*$-action to obtain a localized version of Givental's uniqueness lemma.

For $k,d\geq 0$ denote by $\PP^1 [ k | d ]$ the moduli space parameterizing stable genus 0 curves with a rigid $\PP ^1$ component,
$k$ usual markings, and $d$ ordered base-point markings. By matching the stability conditions we get 
$$\PP^1 [ k | d ]/S_d=QG_{0, k, d} (\PP ^0),$$
the quasimap graph space with target $\PP^0=\CC/\!\!/\CC^*$ (and stability parameter $\ke=0+$).

The $\T$-fixed locus of 
$QG_{0, k, d(\beta)} (\PP ^N)$ whose elements are supported only on $p_\sigma \in X^{\T}\subset (\PP ^N)^{\T}$ may be viewed as $QG_{0, k, d(\beta)} (p_\sigma=\CC/\!\!/\CC^*)$,
and is therefore isomorphic 
to $\PP^1 [ k | d(\beta) ]/ S_{d(\beta)}$. 
We will denote it by $\PP^1 [ k | d(\beta) ] _\sigma$.

Let $\overline{M}G_{0, k, \beta}(X)$ denote the usual stable map graph spaces.
Consider the $(0+)$-shifted virtual class
$$[\overline{M}G_{0, k, \beta}^{\mathrm{shifted}}(X)]^{\mathrm{vir}}:=
\sum_{A,\beta_0+\beta_a=\beta} \frac{1}{|A|!} [\overline{M}G_{0, k\cup A, \beta_0}(X)]^{\mathrm{vir}}\cap ev_A^* (\otimes_a [I_1]_{\beta_a}) $$
and take its $\T$-localization residue 
supported only at $p_\sigma$ under the contraction map $b\circ c\circ \iota _{\infty}$ on graph spaces. It is given by
\begin{align}\label{Res} 
&\mathrm{Res}_\sigma ([\overline{M}G_{0, k, \beta}^{\mathrm{shifted}}(X)]^{\mathrm{vir}})  =\\ 
\nonumber &
(b\circ c\circ \iota _\infty)^{\T}_*  \left( \sum_{A,\beta_0+\beta_a=\beta} \frac{1}{|A|!}
\sum _\Gamma \frac{ [G_\Gamma] } {\mathrm{e}(N_{G_\Gamma}^{\mathrm{vir}} )} \cap (ev_A^* (\otimes_a [I_1]_{\beta_a}))|_{G_\Gamma}\right), \end{align} 
the inner sum over all $\T$-fixed components $G_\Gamma$ in $G_{0, k\cup A, \beta_0}(X)$
which are mapped to $\PP^1 [ k | d(\beta) ] _\sigma$ by $b\circ c\circ \iota _\infty$.

The residue \eqref{Res} is an element of the localized $\T\times \CC^*$-equivariant homology group  $H_*^{\CC ^*\times \T ,\T\text{-loc}} (\PP^1 [ k | d(\beta) ])$, where
{\it only the $\T$-parameters} are inverted. 

The $\T$-fixed loci $G_\Gamma$ correspond to decorated 
graphs $\Gamma=(V,E)$ with a distinguished vertex $v_0$, labelled by $\sigma$ and carrying all markings in $[k]$, as in \S\ref{localization}. 
Up to a finite quotient by automorphisms, the component $G_\Gamma$ is isomorphic to the product
$$\PP^1[\mathrm{val}(v_0)]\times\prod_{v\neq v_0}\overline{M}_{0,\mathrm{val}(v)}.$$
Here $\PP^1[\mathrm{val}(v_0)]$ is the Fulton-MacPherson moduli space 
of stable genus zero marked curves with a rigid component.

Consider the action by $\CC^*$ on a component $G_\Gamma$. The fixed points are obtained by taking two stable maps to $X$ which are supported at $p_\sigma$ under
$b\circ c\circ \iota _{\infty}$, each with one extra marking, and attaching them to the rigid $\PP^1$ at $0$ and $\infty$ using the respective additional markings.

It follows that the components of the $\CC^*$-fixed loci in $G_\Gamma$
are isomorphic to products 
$F_{\Gamma _1} \times F_{\Gamma _2},$
where 
\begin{itemize}
\item each $F_{\Gamma _i}$ is a $\T$-fixed component in $\overline{M}_{0, B_i\cup A_i\cup \bullet}(X,\beta_0^i)$ supported at $p_\sigma$ under $b\circ c\circ\iota_{\infty}$ ,
\item 
$B_1\coprod  B_2 = \{1, ..., k\}$, $A_1\coprod A_2=A$ and $ \beta _0^1+ \beta _0^2 = \beta_0$, 
\item the graphs $\Gamma_1$ and $\Gamma_2$ satisfy $\Gamma _1 \star \Gamma _2 = \Gamma$,
where the operation $\star$ means joining at the two distinguished non-degenerate vertices and deleting the two additional markings from the decoration of the resulting graph.
\end{itemize}

For fixed $B_1\coprod  B_2 = \{1, ..., k\}$ and $\beta_1+\beta_2=\beta$ we collect together all components $F_{\Gamma _1} \times F_{\Gamma _2}$ with the given splitting of the markings in $[k]$
and with 
$$\beta_0^1+\sum_{a\in A_1}\beta_a=\beta_1,\;\;\;  \beta_0^2+\sum_{a\in A_2}\beta_a=\beta_2.$$
The map $b\circ c\circ\iota_\infty$ on the $(\CC^*\times\T)$-fixed locus  
$$\coprod _{\stackrel{B_1\coprod  B_2 =[k]} {\beta _1+ \beta _2 = \beta}}F_{\Gamma _1} \times F_{\Gamma _2}$$ in the disjoint union of graph spaces is the composition
the product of contraction maps  
on moduli of unparametrized stable maps with the inclusion
\begin{equation}\label{gluing} 
\overline{M}_{0, B_1\cup\bullet| d(\beta _1)}/S_{d(\beta_1)}\times
\overline{M}_{0, B_2\cup\bullet| d(\beta _2)}/S_{d(\beta_2)}\hookrightarrow \PP^1 [k | d(\beta )]_\sigma.
\end{equation}

In the stable cases $B_1, B_2\neq\emptyset$, the inclusion is obtained by attaching two stable quasimaps to $p_\sigma\in \PP^N$ at $0$ and $\infty$ on the rigid $\PP ^1$.
If for example $B_1=\emptyset$, the inclusion is obtained by taking a degree $d(\beta_1)$ quasimap from the rigid $\PP^1$ to $p_\sigma$ which has at $0$ a base-point
of multiplicity $d(\beta_1)$, and gluing to it at $\infty$ a stable quasimap to $p_\sigma$ from $\overline{M}_{0, k\cup \bullet| d(\beta _2)}$.

Applying $\CC ^*$-localization and summing over all $(A,\beta_0, \beta_a)$ gives a factorization expression 
\begin{align}\label{factorization maps} & \mathrm{Res}_\sigma ([\overline{M}G_{0, k, \beta}^{\mathrm{shifted}}(X)]^{\mathrm{vir}})= \\
\nonumber & = \sum _{\stackrel{B_1\coprod  B_2 =[k]} {\beta _1+ \beta _2 = \beta}} \frac{1}{[T_{p_\sigma}X]}
  \left(\frac{P^\infty_{\sigma, \beta _1} [\overline{M}_{0, B_1\cup\bullet| d(\beta _1)}] }{z (z-\psi _\bullet)} \right) \star
\left(\frac{P^{\infty}_{\sigma, \beta _2} [\overline{M}_{0, B_2\cup\bullet| d(\beta _2)}]}{-z (-z-\psi _\bullet)}\right), 
\end{align}
where $\star$ means the operation 
\[ \star : H_*( \overline{M}_{0, B_1\cup\bullet| d(\beta _1)}  ) \otimes H_* ( \overline{M}_{0, B_2\cup\bullet| d(\beta _2)}) \ra H_* (\PP^1 [k | d(\beta )]) \]
induced by the inclusion \eqref{gluing}.

The notation in the formula \eqref{factorization maps} requires more explanation.
We write $P^\infty_{\sigma, \beta _i} [\overline{M}_{0, B_i\cup\bullet| d(\beta _i)}]$ to indicate that $P^\infty_{\sigma, \beta _i} $, 
which is independent of the number $k$ of usual markings, is evaluated in $H^*_{\T , loc} (\overline{M}_{0,  B_i\cup\bullet| d(\beta_i) })$ by taking the cap product with the fundamental class.

The product of denominators $z(z-\psi_\bullet)(-z)(-z-\psi_\bullet)$ is the (well-known) $\CC^*$-equivariant Euler class of the normal bundle to 
$$\overline{M}_{0, B_1\cup\bullet| d(\beta _0^1)}\times
\overline{M}_{0, B_2\cup\bullet| d(\beta _0^2)}$$ in $\PP^1 [k | d(\beta )]$, with $\psi_\bullet$  the cotangent line classes at the additional markings of
$\overline{M}_{0, B_i\cup\bullet| d(\beta _i)}$, and $z$ the equivariant parameter. 

The formula \eqref{factorization maps} is correct as written for the stable cases $B_1,B_2\neq\emptyset$, but for the unstable cases the notation is abused 
and should be understood as the following convention: 
\begin{equation}\label{empty-map} \begin{split} &\left(\frac{P^\infty_{\sigma, \beta _1} [\overline{M}_{0, \bullet| d(\beta _1)}] }{z (z-\psi _0)} \right) := i^*_\sigma [J(q,I_1(q),z)]_{\beta_1},\\
&\left(\frac{P^\infty_{\sigma, \beta _2} [\overline{M}_{0, \bullet| d(\beta _2)}] }{-z (-z-\psi _\infty)} \right) := i^*_\sigma [J(q,I_1(q),-z)]_{\beta_2},
\end{split}\end{equation}
where $i_\sigma:\{ p_\sigma\} \ra X$ is the inclusion and $[J(q,I_1(q),z)]_{\beta}$ is the coefficient of $q^{\beta}$ in the {\it mirror map-transformed} small $J$-function of $X$. Precisely,
\begin{equation}\label{mirror small J}
\begin{split}
&J(q,I_1,z)=\one+\frac{I_1(q)}{z}+\\
&\sum_{(\beta' ,m)\neq(0,1)} \frac{q^{\beta'}}{m!}(ev_\bullet)_*\frac{[\overline{M}_{0,m\cup \bullet}(X,\beta')]^{\mathrm{vir}}\cap \prod_{j=1}^{m}ev_j^*(I_1(q))}{z (z-\psi_\bullet )}.
\end{split}
\end{equation}
Note that by this definition $i^*_\sigma [J(q,I_1(q),z)]_{\beta}$ is an element of $$H^*_{\T, \mathrm{loc}}(\Spec\CC)[[1/z]]=\QQ(\{\lambda_\rho\})[[1/z]].$$
However, its appearance as a localization contribution in the factorization 
\eqref{factorization maps} for $k=0$ shows that the $1/z$-series can be summed to a rational function in 
$H^*_{\CC^*\times\T, \mathrm{loc}}(\Spec\CC)=\QQ(\{\lambda_\rho\}, z)$.

The same argument for the quasimap graph space $QG_{0, k, \beta}(X)$ produces the factorization

\begin{align} 
 \mathrm{Res}_\sigma &([QG_{0, k, \beta}(X)]^{\mathrm{vir}}) = \\
\nonumber &=(\iota _{0+})_*
 \sum _{\Gamma' : \mathrm{Supp}(\Gamma' )=p_\sigma} \frac{ [G_{\Gamma'}] } {\mathrm{e}(N_{G_{\Gamma'}}^{\mathrm{vir}} )}=\\
\nonumber & =\sum _{ \stackrel{B_1\coprod  B_2 = [k]} {\beta _1+ \beta _2 = \beta}} \frac{1}{[T_{p_\sigma}X]}
 \left(\frac{P^{0+}_{\sigma, \beta _1} [\overline{M}_{0, B_1\cup\bullet| d(\beta _1)}] }{z (z-\psi _\bullet)} \right) \star
\left(\frac{P^{0+}_{\sigma, \beta _2} [\overline{M}_{0, B_2\cup\bullet| d(\beta _2)}]}{-z (-z-\psi _\bullet)}\right).
\end{align}
Again a convention is used in the unstable cases:
\begin{align}\label{empty-quasi} 
&\frac{P^{0+}_{\sigma, \beta _1}[\overline{M}_{0, \bullet| d(\beta _1)}] }{z (z-\psi _\bullet)}  := i^*_\sigma I_{\beta_1 }(z),\\
\nonumber & \frac{P^{0+}_{\sigma, \beta _2}[\overline{M}_{0, \bullet| d(\beta _2)}] }{-z (-z-\psi _\bullet)}  := i^*_\sigma I_{\beta_2 }(-z),
\end{align}
with $I_{\beta}(z)$ the degree $\beta$ part of the small $I$-function of the toric variety $X$. It is given explicitly (see \cite{Givental}, or \S7 of \cite{CK}) by the formula
$$i^*_\sigma I_{\beta }(z)=\prod_{\rho\in\Sigma(1)}\frac{\prod_{j=-\infty}^0(i^*_\sigma c_1^{\T}(\underline{L}_\rho)+jz)}{\prod_{j=-\infty}^{d_\rho}(i^*_\sigma c_1^{\T}(\underline{L}_\rho)+jz)}.
$$
We may view $i^*_\sigma I_{\beta }$ either as an element of $\QQ(\{\lambda_\rho\},z)$ or, by expanding the geometric series, as an element in $\QQ(\{\lambda_\rho\})[[1/z]]$.

Given a monomial $\mu_B(\psi)=\prod _{i\in B}  \psi _i^{\alpha_i}$ and $\ke\in\{0+,\infty\}$, we define
\begin{equation}\label{descendant}
\langle \frac{1}{z (z-\psi _\bullet)}, \mu_B(\psi) \rangle _{0, B\cup\bullet, \beta}^{\ke, p_\sigma}:=
\int _{\overline{M}_{0, B\cup\bullet| d(\beta)}}\frac{P^{\ke}_{\sigma, \beta _1}}{z (z-\psi _\bullet)}\mu_B(\psi).
\end{equation}
For the unstable cases $B=\emptyset$, we use the same convention as before:
$$\langle \frac{1}{z (z-\psi _\bullet)} \rangle _{0, \bullet, \beta}^{\infty, p_\sigma}:=i_\sigma ^*[J (q,I_1(q),z)]_\beta, \;\;\;\; 
\langle \frac{1}{z (z-\psi _\bullet)} \rangle _{0, \bullet, \beta}^{0+, p_\sigma}:=i_\sigma ^*I_\beta(z).
$$
In all cases, it is an element in the field 
$$H^*_{\CC^*\times\T, loc}(\Spec \CC)=\QQ(\{\lambda_\rho\}, z).$$
The notation with superscript $\sigma$ is chosen to reflect that \eqref{descendant} for $\ke=0+$
is the localization
contribution from $\T$-fixed loci supported only over $p_\sigma$ to the degree $\beta$ part in the generating series of descendant invariants 
\begin{equation}\label{series 0+}
\lla \frac{1}{z (z-\psi _\bullet)}, \mu_B(\psi) \rra _{0, B\cup\bullet}^{0+}(t)|_{t=0},
\end{equation}
and the same is true when $\ke=\infty$ for the series
\begin{equation}\label{series infty}
\lla \frac{1}{z (z-\psi _\bullet)}, \mu_B(\psi) \rra _{0, B\cup\bullet}^{\infty} (t+I_1(q))|_{t=0}.
\end{equation}
Note that the two series are equal by Theorem \ref{genus zero intro}, but for $B\neq\emptyset$ this {\it does not}
directly imply the sharper equality of the local contributions \eqref{descendant} at each $p_\sigma$.

However, for $B=\emptyset$ the series \eqref{series 0+} is the small $I$-function of $X$,
\begin{equation*}\label{toric small I}
I^X_{sm}(q,z)=\one +\sum_{\beta\neq 0} q^\beta \prod_{\rho\in\Sigma(1)}\frac{\prod_{j=-\infty}^0 (c_1^{\T}(\underline{L}_\rho)+jz)}{\prod_{j=-\infty}^{d_\rho}(c_1^{\T}(\underline{L}_\rho)+jz)},
\end{equation*}
while the series \eqref{series infty} is the mirror-map transform $J(q,I_1,z)$ of the small $J$-function of $X$, see \eqref{mirror small J}.
Their equality (which is of course the celebrated Givental's toric Mirror Theorem, \cite{Givental}) does give the required local equality
\begin{equation}\label{I=J}
i_\sigma ^*[J (q,I_1(q),z)]_\beta=i_\sigma ^*I_\beta
\end{equation}
for all fixed points $p_\sigma \in X^{\T}$ and all $\theta$-effective $\beta$. 

The following Lemma proves in particular \eqref{ind} of Lemma \ref{independence} for the largest strata $S(\tau,\cP)=\overline{M}_{0, B\cup\bullet| d(\beta)}$.
It is a variant of Givental's Uniqueness Lemma (Proposition 4.5 in \cite{Givental}, or Lemma 3 in \cite{Kim}).

\begin{Lemma} {\em (Localized Uniqueness Lemma)}\label{uniqueness lemma}
For every $B$ (possibly empty), every monomial $\mu_B $ in $\psi$-classes, and every $\theta$-effective class $\beta$, the localized intersection number  \eqref{descendant}
is independent of $\ke\in \{0+,\infty\}$.
\end{Lemma}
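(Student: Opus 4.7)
The plan is to induct on the pair $(|B|, \beta)$ ordered lexicographically, with $\beta$ compared in the Mori partial order on effective classes. The base case $|B|=0$ is exactly the toric Mirror Theorem \eqref{I=J}, $i_\sigma^* I_\beta(z) = i_\sigma^*[J(q, I_1(q), z)]_\beta$, which by the unstable conventions \eqref{empty-map} and \eqref{empty-quasi} is the asserted equality of \eqref{descendant} at $B=\emptyset$.

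For the inductive step, I would realize \eqref{descendant} as one summand in a $\CC^*$-equivariant residue on a graph space with $B$-insertions. Starting from the shifted stable map graph-space virtual class $[\overline{M}G_{0,B,\beta}^{\mathrm{shifted}}(X)]^{\mathrm{vir}}$ for $\ke=\infty$ and from the quasimap graph-space virtual class $[QG_{0,B,\beta}(X)]^{\mathrm{vir}}$ for $\ke=0+$, each paired with $\mu_B(\psi)$ at the $B$-markings, first take the $\T$-residue at the fixed locus supported over $p_\sigma$ and then apply $\CC^*$-localization to the rigid $\PP^1$. The resulting factorization, in exact parallel with \eqref{factorization maps}, is a sum over splittings $(B_1,\beta_1)\sqcup(B_2,\beta_2)=(B,\beta)$, and by \eqref{matching} the pure-weight edge and flag contributions match term by term between the two theories.

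One distinguished summand puts all of $\mu_B$ and of $\beta$ on a single side of the rigid $\PP^1$ and produces precisely $\langle \tfrac{1}{z(z-\psi_\bullet)}, \mu_B(\psi)\rangle^{\ke, p_\sigma}_{0, B\cup\bullet,\beta}$ up to a common edge denominator; every other summand is a product of two local integrals of the same shape with strictly smaller invariants $(|B'|,\beta')<(|B|,\beta)$, possibly evaluated at a different fixed point $p_{\sigma'}$ linked to $p_\sigma$ by an edge of positive multiplicity. By the inductive hypothesis, each such satellite summand matches between $\ke=0+$ and $\ke=\infty$. On the other hand, the totals before $\CC^*$-localization agree because the $\CC^*$-equivariant graph-space integrals can be recast as ordinary $\T$-equivariant descendant invariants of $X$, with an auxiliary marked point whose $\psi$-class plays the role of the $\CC^*$-parameter $z$, and these numerical invariants are $\ke$-independent by the multi-descendant genus zero wall-crossing of Theorem \ref{genus zero intro}. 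Subtracting the matched satellite terms from both sides isolates and identifies the distinguished summand, giving the desired equality.

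The hard part will be the matching of the total $\CC^*$-equivariant graph-space integrals: one must convert them to ordinary descendant invariants of $X$ in a way compatible with the virtual classes on both sides, and then apply Theorem \ref{genus zero intro} in its full multi-descendant form --- all without circularly invoking the cycle-level Theorem \ref{ClassThm} that we are aiming to prove. Controlling the inductive book-keeping, so that every non-distinguished satellite summand strictly reduces $(|B|,\beta)$ and terminates at the $|B'|=0$ leaves closed off by \eqref{I=J}, is a second, more routine, technical point.
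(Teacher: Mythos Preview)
Your inductive setup and the base case via \eqref{I=J} are correct, and factorizing through $\CC^*$-localization on graph spaces is the right tool. But the inductive step has two genuine gaps.

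The main one is the claim that ``the totals before $\CC^*$-localization agree \dots\ by Theorem \ref{genus zero intro}.'' The quantity you want to match is $\int_{\mathrm{Res}_\sigma(\cdots)}\mu_B(\psi)$, which is the contribution from the single $\T$-fixed locus supported over $p_\sigma$, not a global invariant of $X$. Theorem \ref{genus zero intro} only gives equality after summing over \emph{all} fixed points; the paper explicitly warns (in the paragraph just before \eqref{I=J}) that for $B\neq\emptyset$ this global equality does not imply the sharper local equality at each $p_\sigma$. There is no way to recast the local residue as a descendant invariant of $X$ without already invoking the cycle-level statement you are trying to prove. (A minor related point: in the factorization \eqref{factorization maps} all satellite factors live over the \emph{same} $p_\sigma$; no neighboring fixed points $p_{\sigma'}$ appear.)

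Second, even if the totals did match, there are \emph{two} distinguished summands, one with $(B_1,\beta_1)=(B,\beta)$ and one with $(B_2,\beta_2)=(B,\beta)$, related by $z\mapsto -z$. Subtracting the matched satellites would yield only $\Delta f(z)+\Delta f(-z)=0$, i.e.\ that $\Delta f$ is odd in $z$, not that it vanishes. The paper's proof avoids both issues by a different mechanism: it never matches totals, but instead inserts an extra factor $e^{c_1(U)y}$ with $y^2=0$ and uses only that the resulting local integral $D^{\mu,\ke}_{k,\beta,\sigma}$ is \emph{regular in $z$} (automatic, since it is defined without $\CC^*$-localization). After induction the difference becomes $\Delta f(z)+e^{-d(\beta)zy}\Delta f(-z)$ up to a unit; regularity of the $y^0$-part forces $\Delta f$ odd, and then regularity of the $y^1$-part $-d(\beta)z\,\Delta f(-z)$ forces $\Delta f=0$, since $\Delta f$ is a polynomial in $1/z$ divisible by $1/z^2$. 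The $e^{c_1(U)y}$ insertion is precisely the device that breaks the $z\leftrightarrow -z$ symmetry.
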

 
\begin{proof}  We prove the Lemma by induction on $\beta$ and $k=|B|$. We first observe that:

$(a)$  For $B=\emptyset$, the lemma reduces to \eqref{I=J}, hence it is true by Givental's Theorem.

$(b)$ For any $B$ and $\beta =0$, the lemma is true. Indeed, the terms with $|B|=1$ vanish on both sides, while for $|B|\geq 2$ the moduli spaces coincide with
$\overline{M}_{0,B\cup\bullet}\times X$ and are therefore $\ke$-independent. 

Suppose that the lemma holds true for $\beta ' < \beta$ and for  $k' < k$. Let $k\ge 1$ and $\beta \geq 0$. 

Let $y$ be a formal variable with relation $y^2=0$.
For a polynomial $\CC^*$-equivariant cohomology class $\mu (\psi)= \psi _1^{\alpha_1}...\psi_k ^{\alpha_k}$ on $\PP ^1[k|d(\beta)]$, let
\begin{align} 
D^{\mu, \infty}_{k, \beta, \sigma} &:= \int _{\mathrm{Res}_\sigma ([\overline{M}G_{0, k, \beta}^{\mathrm{shifted}}(X)]^{\mathrm{vir}}) } \mu (\psi ) e^{c_1(U) y } ,
\end{align} 
where $U= U(L_\theta)$ is the universal $(\T\times\CC^*)$-equivariant line bundle described in \S 3.3 of \cite{CK0} and $c_1$ is the equivariant first Chern class. Similarly, put
\begin{align} 
D^{\mu, 0+}_{k, \beta, \sigma} &:= \int _{\mathrm{Res}_\sigma ([QG_{0, k, \beta}(X)]^{\mathrm{vir}}) } \mu (\psi ) e^{c_1(U) y }. 
\end{align} 
Since they are defined without localization with respect to the $\CC^*$-action, the quantities $D^{\mu, \ke}_{m, \beta, \sigma}$ have no pole in $z$, i.e., we have
   $$D^{\mu, \ke}_{m, \beta, \sigma}     \in H^*_{\T, \mathrm{loc}}[y]/(y^2) [[z]].$$

By the factorized expressions we obtain 
\begin{align*} D^{\mu,\ke}_{k, \beta, \sigma}  
&=  \sum _{\stackrel{B_1\coprod  B_2 = [k] } {\beta _1+ \beta _2 = \beta}} \frac{e ^{(w(\cO(\theta)_{p_\sigma}) - d(\beta _2)  z )y }}{[T_{p_\sigma}X]} \times\\
& \langle \frac{1}{z (z-\psi _\bullet)}, \mu _{B_1}\rangle _{0, B_1\cup\bullet, \beta _1}^{p_\sigma,\ke} \langle \frac{1}{z (z-\psi _\bullet)}, \mu _{B_2}\rangle _{0, B_2\cup\bullet, \beta _2}^{p_\sigma,\ke},
\end{align*}
where $w(\cO(\theta)_{p_\sigma})$ is the weight of the $\T$-representation on the fiber of $\cO(\theta)$ at $p_\sigma$. 

Consider the difference  
\begin{align*} \Delta(D _{k, \beta , \sigma}^{\mu,\ke}) :=  D _{k, \beta , p}^{\mu, \infty} - D_{k, \beta, p}^{\mu, 0+} .\end{align*}
By the induction hypothesis we get
\begin{align*}\Delta(D _{k, \beta , \sigma}^{\mu, \ke}) & =  \frac{e ^{w(\cO(\theta)_{p_\sigma})y}}{[T_{p_\sigma}X]} 
\left(\Delta (\langle \frac{1}{z (z-\psi _\bullet)}, \mu \rangle _{0, k\cup\bullet, \beta}^{\ke, p_\sigma} )\right.\\
&\left.+ e^{-d(\beta)  z y} \Delta (\langle \frac{1}{-z (-z-\psi _\bullet)}, \mu \rangle _{0, k\cup\bullet, \beta}^{\ke, p_\sigma} ) \right),  \end{align*}
where 
\begin{align*}
&\Delta (\langle \frac{1}{\pm z (\pm z-\psi _\bullet)}, \mu \rangle _{0, k\cup\bullet, \beta}^{\ke, p_\sigma}):=\\
&\langle \frac{1}{\pm z (\pm z-\psi _\bullet)}, \mu \rangle _{0, k\cup\bullet, \beta}^{\infty, p_\sigma}-
\langle \frac{1}{\pm z (\pm z-\psi _\bullet)}, \mu \rangle _{0, k\cup\bullet, \beta}^{0+, p_\sigma}.
\end{align*}

Since the $\psi$-classes are nilpotent in $H^*_{\CC^*\times\T, \mathrm{loc}}(\overline{M}_{0,k+1|d(\beta)})$, we conclude that
$\langle\frac{1}{z (z-\psi _\bullet)}, \mu \rangle _{0, k+1, \beta}^{\ke, p_\sigma}$ for $\ke\in\{0+,\infty\}$
are polynomials in $1/z$, divisible by $(1/z)^2$. Hence we may write
$$\Delta (\langle \frac{1}{z (z-\psi _\bullet)}, \mu \rangle _{0, k+1, \beta}^{\ke, p_\sigma} )=z^{-2a}(C_1\frac{1}{z}+C_2),$$
with $a\geq 1$.

On the other hand, we have observed that $D _{k, \beta , p}^{\mu, \ke}$ and therefore
$$\Delta(D _{k, \beta , \sigma}^{\mu, \ke})=\frac{e ^{w(\cO(\theta)_{p_\sigma})y}}{[T_{p_\sigma}X]} z^{-2a}\left(C_1\frac{1}{z}+C_2+(1-d(\beta)  z y)(-C_1\frac{1}{z}+C_2)
\right)$$
has no pole in $z$.
This immediately implies $C_2=C_1=0$ and concludes the proof. \end{proof}

\subsection{Conclusion of the proof of Theorem \ref{ClassThm}} In this section we prove Lemma \ref{independence} for all strata $S(\tau,\cP)$. As explained already, this implies that
Lemma \ref{vertexmatching} is true and finishes the proof of Theorem \ref{ClassThm}.
The argument will use induction and is based on a splitting property enjoyed by $P_{\beta , \sigma }^{ \ke}(k,\tau, \cP, \mu )$ which we discuss next.

Given a stratum $S(\tau,\cP)\subset \overline{M}_{0,k|d(\beta)}$ as in \S\ref{MOP Lemma}, we say that $\cP$ is {\it compatible with} $\beta$ if there exists a splitting
$\beta=\beta_1+\dots +\beta_\ell$ with nonzero $\theta$-effective $\beta_i$, and such that $|\cP_i|=d(\beta_i)$. 

For such a compatible stratum and a fixed $1\leq m\leq \ell-1$,
we split the stratum at the $m^{\mathrm{th}}$ node. Precisely, set
$$\tau'=(t_1,\dots,t_m),\;\;\; \tau''=(t_{m+1},\dots t_\ell));$$
$$\cP'=(\cP_1,\dots,\cP_m)),\;\;\; \cP''=(\cP_{m+1},\dots, \cP_\ell);$$
$$k'+1=t_1+\dots +t_m+m+2,\;\;\; k''+1=t_{m+1}+\dots +t_\ell+\ell-m+2 ;$$
$$d'=|\cP_1|+\dots |\cP_m|,\;\;\; d''=|\cP_{m+1}|+\dots |\cP_\ell |.$$
By the compatibility assumption, the (finite) subset 
$$R(\cP,\beta):=\{(\beta',\beta'')\; |\; \beta'+\beta''=\beta, d(\beta')=d', d(\beta'')=d''\}$$
of $\mathrm{Eff}(\CC^{\Sigma(1)},\G,\theta)\times \mathrm{Eff}(\CC^{\Sigma(1)},\G,\theta)$ is nonempty. We have a cartesian diagram
$$\begin{CD}S(\tau',\cP')\times S(\tau'',\cP'')@>>>\overline{M}_{0,k'+1|d(\beta')}\times \overline{M}_{0,1+k''|d(\beta'')}\\
@VVV @VVV\\
S(\tau,\cP)@>>>\overline{M}_{0,k|d(\beta)},
\end{CD}$$
where the horizontal maps are the inclusions and the vertical maps are obtained by gluing the last marking on the first factor to the first marking on the second factor.

Finally, given a monomial $\mu(\psi)=\psi_1^{\alpha_1}\dots\psi_k^{\alpha_k}$ we write  $\mu=\mu'\mu''$ with $\mu'(\psi)=\psi_1^{\alpha_1}\dots\psi_{k'}^{\alpha_{k'}}$
and $\mu''(\psi)=\psi_{k'+1}^{\alpha_{k'+1}}\dots\psi_{k}^{\alpha_{k}}$

\begin{Lemma}{\em (Splitting Lemma)}\label{Splitting Lemma} Let $k\geq 3$, $\beta\neq 0$, $\ke\in\{0+,\infty\}$,  and let $S(\tau,\cP)\subset \overline{M}_{0,k|d(\beta)}$  be a chain-type stratum. 

$(i)$ If $\cP$ is not compatible with $\beta$ then for every monomial $\mu(\psi)$ the intersection number
$$P_{\beta , \sigma }^{ \ke}(k,\tau, \cP, \mu )
:= \int _ {S(\tau, \cP)} P^\ke _{\beta , \sigma }(\hat{\psi} _j , D_J) \mu (\psi ) $$
vanishes.

$(ii)$ If  $\cP$ is compatible with $\beta$ then
\begin{align}\label{splitting} &P_{\beta , \sigma }^{ \ke}(k,\tau, \cP, \mu )=\\
\nonumber &=
\sum_{(\beta',\beta'')\in R(\cP,\beta)} P_{\beta' , \sigma }^{ \ke}(k'+1,\tau', \cP', \mu' )
P_{\beta'' , \sigma }^{ \ke}(k''+1,\tau'', \cP'', \mu'' ).
\end{align}
\end{Lemma}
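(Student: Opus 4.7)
The plan is to trace the intersection number $P^{\ke}_{\beta,\sigma}(k,\tau,\cP,\mu)$ back to its geometric origin as a pushforward of $\T$-localization residues (as summarized in \S\ref{localization}), and then apply the standard splitting of virtual classes at nodal boundary strata. First I would recast the integral $\int_{S(\tau,\cP)} P^\ke_{\beta,\sigma}(\hat\psi_j, D_J)\mu(\psi)$ as a $\T$-equivariant integral on the appropriate moduli space of shifted stable maps (for $\ke=\infty$) or quasimaps (for $\ke=0+$) into $X$ with $\T$-support at the fixed point $p_\sigma$. Via the cartesian diagram preceding the lemma, $S(\tau,\cP)$ is the image of $S(\tau',\cP')\times S(\tau'',\cP'')$ under a gluing map that attaches the last marking of the first factor to the first marking of the second. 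Pulling $P^\ke_{\beta,\sigma}|_{S(\tau,\cP)}$ back through this map reduces the problem to a factorization on the underlying $\T$-fixed loci.

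Next I would use the graph-theoretic description of the fixed loci from \S\ref{fixed-qmaps} (and its stable-map analog in \S\ref{fixed-maps}) together with the virtual class splitting \eqref{bdry1}--\eqref{bdry2}. The domain curve at the distinguished vertex $v_0$ of each graph $\Gamma'$ (resp.\ $\Gamma$ for $\ke=\infty$) degenerates along $S(\tau,\cP)$ into a chain, so the graph decomposes into two subgraphs $\Gamma'_1\star\Gamma'_2$ glued at a new bullet-marking, carrying $\theta$-effective classes $\beta_1,\beta_2$ with $\beta_1+\beta_2=\beta$. The vertex contribution $\mathrm{Q}_{v_0}$ in \eqref{Qv} (resp.\ $M_{v_0}$ in \eqref{Mv}) is a product over divisors $\hat{\bf{x}}_\rho^{\pm}$ (resp.\ collapsed trees $T_e$ and $A$-markings at $v_0$), each of which lies on exactly one side of the node, so the product splits between $\Gamma'_1$ and $\Gamma'_2$; the edge and flag normal-bundle contributions from the non-collapsed pieces match across the gluing. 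Summing over pairs $(\beta_1,\beta_2)\in R(\cP,\beta)$ and organizing the symmetric-group actions then yields part (ii). For part (i), the graph-level decomposition forces every cluster $\cP_i$ to be attached to a $\theta$-effective class $\beta_i$ with $d(\beta_i)=|\cP_i|$; if $\cP$ is incompatible with $\beta$, no such decomposition exists, so every $\T$-fixed contribution to $P^\ke_{\beta,\sigma}|_{S(\tau,\cP)}$ vanishes.

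The main obstacle will be verifying the multiplicativity of $\mathrm{Q}_{v_0}$ (and $M_{v_0}$) across the chain node carefully enough to handle the symmetric-group quotients: the factor $1/|A|!$ in \eqref{stable map residues}, the $S_{d(\beta)}$-action on base-point markings, and the subgroup $S_{d'}\times S_{d''}\subset S_{d(\beta)}$ must combine so that the combinatorial factor matches the sum over $R(\cP,\beta)$. A related subtlety for $\ke=\infty$ is that the outer sum over $(A,\beta_0,\beta_a)$ and the insertions $[I_1]_{\beta_a}$ must also split between the two factors in the cartesian square; this should follow cleanly from the fact that $[I_1]_{\beta_a}$ is attached to a single $A$-marking, so each such marking is unambiguously routed to $\Gamma'_1$ or $\Gamma'_2$ in the splitting.
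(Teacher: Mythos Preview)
Your proposal is correct and follows essentially the same approach as the paper: both arguments identify $P^\ke_{\beta,\sigma}$ as a sum over graphs of pushforwards via the contraction maps $g_\Gamma$ (described in \eqref{ciT}--\eqref{iT}), observe that restriction to $S(\tau,\cP)$ forces a partition of the edges and $A$-markings at $v_0$ into two groups with $\theta$-effective degrees summing to $\beta$ (giving part~(i) when no such partition exists), and then factor the vertex contribution $M_{v_0}$ or $\mathrm{Q}_{v_0}$ across the node. The paper streamlines the bookkeeping you flag as the main obstacle by first reducing to $\ell=2$ and then exhibiting the explicit cartesian square relating $g_{\Gamma'}\times g_{\Gamma''}$ to $g_{\Gamma'\star\Gamma''}$, from which $j^*(g_\Gamma)_*=(g_{\Gamma'}\times g_{\Gamma''})_*h^*$ handles the symmetric-group and $1/|A|!$ factors automatically; your appeal to the boundary splitting \eqref{bdry1}--\eqref{bdry2} is not needed here, since everything takes place after pushforward to the Hassett spaces.
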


\begin{proof} By an easy induction it suffices to assume $\ell=2$. We discuss the stable map case $\ke=\infty$; the quasimap case is similar (and easier). 

Let $\cP=(\cP',\cP'')$,
$\tau=(t',t'')$, so that $k'=t'+2$ and $k''=t''+2$. The corresponding stratum $S(\tau,\cP)$ is the image of the finite gluing morphism
$$j:\overline{M}_{0,k'+1|d'}\times \overline{M}_{0,1+k''|d''}\lra \overline{M}_{0,k|d(\beta)}.
$$

Let $\cF(k,\beta,\sigma)$ denote the set of graphs parametrizing $\T$-fixed loci in $\coprod_{A,\beta_0+\sum_a\beta_a=\beta}\overline{M}_{0,k\cup A}(X,\beta_0)$ 
supported at $p_\sigma$ under the contraction
$(b\circ c\circ i_\infty)^{\T}$, as described in \S\ref{localization}. For a graph $\Gamma\in\cF(k,\beta,\sigma)$ 
let $E_0$ be the set of edges incident to the distinguished vertex $v_0$. 
Each $e\in E_0$ has attached to it the class $\beta_e$ of the map from the corresponding rational tail $T_e$.
As explained in \eqref{ciT}, the contraction map on $F_\Gamma$ is essentially
$$g_\Gamma: \overline{M}_{g,k\cup A(v_0)\cup E_0}\lra \overline{M}_{g,k|A(v_0)\cup E_0}\lra \overline{M}_{g, k| d(\beta)}, $$
where the second map increases the multiplicity of each $e\in E_0$ by $d(\beta_e)+\sum_{a\in A(e)}d(\beta_a)$ and of each $a\in A(v_0)$ by $d(\beta_a)$. If the image of $g_\Gamma$ intersects the stratum, 
then there are set partitions
$E_0=E'_0\coprod E''_0$ and $A(v_0)=A'(v_0)\coprod A''(v_0)$ such that 
$$d'=d\left(\sum_{e\in E'_0}\left( \beta_e+\sum_{a\in A(e)}\beta_a\right)+\sum_{a\in A'(v_0)}\beta_a\right),$$
$$
d''=d\left(\sum_{e\in E''_0}\left( \beta_e+\sum_{a\in A(e)}\beta_a\right)+\sum_{a\in A''(v_0)}\beta_a\right).
$$
Part $(i)$ of the Lemma follows immediately from this, since the nonvanishing of the intersection number requires that the image of $g_\Gamma$ meets
$S(\tau,\cP)$ for at least one graph $\Gamma$.

Now let $\beta$ be compatible with $\cP$ and let $(\beta',\beta'')\in R(\cP,\beta)$. Applying the $\star$ operation
described in \S\ref{uniqueness} to $\Gamma'\in\cF(k'+1,\beta',\sigma)$ and 
$\Gamma''\in\cF(1+k'',\beta'',\sigma)$ we obtain a graph $\Gamma=\Gamma'\star\Gamma''\in\cF(k,\beta,\sigma)$, encoding the gluing 
map
$$h:\overline{M}_{0,k'+1 \cup E'_0\cup A'(v_0)}\times \overline{M}_{0,1+k'' \cup E''_0\cup A''(v_0)}\lra \overline{M}_{0,k \cup E_0\cup A(v_0)}.$$
Here $k=k'+k''$, $E_0=E'_0\coprod E''_0$, and $A(v_0)=A'(v_0)\coprod A''(v_0)$.
Further, we think of $[k'+1]$ as the set $\{1,2,\dots ,k'\}\cup\{\bullet\}$ and of $ [1+k'']$ as 
$\{\bullet\}\cup\{k'+1,k'+2,\dots k'+k''\}$, 
with the gluing done at the special markings $\bullet$.

The diagram 
$$\begin{CD}\overline{M}_{0,k' +1\cup E'_0\cup A'(v_0)}\times \overline{M}_{0, 1+k'' \cup E''_0\cup A''(v_0)}@>h>>\overline{M}_{0,k \cup E_0\cup A(v_0)}\\
@V{g_{\Gamma'}\times g_{\Gamma''}}VV @VV{g_\Gamma}V\\
\overline{M}_{0,k'+1 |d(\beta')}\times \overline{M}_{0,1+k'' |d(\beta'')}@>j>>\overline{M}_{0,k|d(\beta)},
\end{CD}$$ is cartesian, hence
\begin{equation}\label{cartesian} j^*(g_\Gamma)_*=(g_{\Gamma'}\times g_{\Gamma''})_*h^*.
\end{equation}
By \eqref{Mv}, the restriction of $P_{\beta , \sigma }^{ \infty}$ to $S(\tau,\cP)$ is computed by 
$$\sum_{\Gamma \in\cF(k,\beta,\sigma)}
j^*(g_\Gamma )_*\prod_{e\in E_0} \frac{\mathrm{cont}(T_e)}{\frac{w(e)}{\delta_e} - \psi _{e} }\prod_{a\in A(v_0)}\mathrm{cont}(a).$$
Since $\psi_e$ pulls-back under $h$ to $\psi_e\ot 1$ for $e\in E'_0$ and to $1\ot\psi_e$ for $e\in E''_0$, part $(ii)$ of the 
Lemma follows from \eqref{cartesian} by summing over
graphs.
\end{proof}

We are now in position to prove Lemma \ref{independence} and therefore complete the proof of Theorem \ref{ClassThm}.

Consider first the case when the class $\beta\neq 0$ is primitive, i.e., if $\beta=\beta'+\beta''$ with $\beta',\beta''\in\mathrm{Eff}(\CC^{\Sigma(1)},\G,\theta)$, then either $\beta'=0$, or $\beta''=0$. 
In this case, for every $k$ and $\mu$, we have
$P_{\beta , \sigma }^{ \ke}(k,\tau, \cP, \mu )=0$ whenever $\ell\geq 2$, 
by Lemma \ref{Splitting Lemma} $(a)$, while for $\ell=1$, the statement is given by the Uniqueness Lemma
(Lemma \ref{uniqueness lemma}).

Let now $\beta >0$ be arbitrary. Assume that if $0\neq\beta'\in \mathrm{Eff}(\CC^{\Sigma(1)},\G,\theta)$ is such that $\beta-\beta'$ is also nonzero and $\theta$-effective, then
$P_{\beta' , \sigma }^{ \ke}(k,\tau, \cP, \mu )$ is independent of $\ke$ for every $k$, every stratum $S(\tau,\cP)\subset\overline{M}_{0,k|d(\beta')}$, and 
every monomial $\mu(\psi)$
(or, equivalently by Lemma \ref{MOPlemma}, that when written in canonical form,
$P^{0+}_{\beta ',\sigma} = P^{\infty}_{ \beta ', \sigma}$ as abstract polynomials). 
Given a stratum $S(\tau, \cP)\subset\overline{M}_{0,k|d(\beta)}$, if $\ell=1$, we are done by  Lemma \ref{uniqueness lemma}. Otherwise, we split it at the first node and apply 
Lemma \ref{Splitting Lemma} $(b)$ to conclude by induction.

\subsection{Remarks on the proof and generalizations} 

\subsubsection{Other stability parameters for toric targets}\label{epsilon for toric} Even though we have restricted to the asymptotic stability condition $\ke=0+$, 
essentially the same argument works for general $\ke$. The required equality \eqref{I=J} between $J^\ke_{sm}(q,z)$ and the mirror transform $J(q, J_1^\ke , z)$ is provided
by Proposition \ref{summary}$(ii)$.
(In the case of the \cite{MOP} proof for Grassmannian targets, the extension to general $\ke$ is done in Toda's paper \cite{Toda}.)

\subsubsection{Other GIT presentations of a toric variety}\label{GIT presentations} For any GIT triple of the form $(\CC^{n+r},\G,\theta)$ with $\G\cong(\CC^*)^r$, satisfying our usual
assumptions that all semistable points are stable and $\G$ acts freely on the stable locus, the quotient $X=\CC^{n+r}/\!\!/_{\theta}\G$ is a nonsingular quasi-projective toric variety
of dimension $n$, see e.g., \cite{Dolgachev}, \S12.  Now
set $$\Sigma(1):=[n+r]=\{1,2,\dots, n+r\},$$
and write $\CC^{\Sigma(1)}$ for $\CC^{n+r}$. For a subset $\sigma\subset [n+r]$ of cardinality $n$, let 
$$\tilde{p}_\sigma:=(z_\rho) \in \CC^{\Sigma(1)}, z_\rho= \begin{cases} 1 & {\text{if}}\; \rho\notin \sigma\\ 0, & {\text{if}}\; \rho\in \sigma \end{cases} .$$
Put
$$\Sigma(n):=\{ \sigma\subset [n+r]\; |\:  |\sigma|=n\; {\text{and}}\; \tilde{p}_\sigma\; {\text{is $\theta$-stable}} \}
$$
and let $p_\sigma\in X$ denote the image of $\tilde{p}_\sigma$ under the quotient map. With the torus $\T=(\CC^*)^{\Sigma(1)}$ acting on $\CC^{\Sigma(1)}$ as before, we have a bijection
$\Sigma(n)\lra X^{\T}$, $\sigma\mapsto p_\sigma$. 

If $n\geq 1$, the (isolated) $1$-dimensional $\T$-orbits in $X$ at $p_\sigma$ are in bijection with the subsets $\tau$ of $\sigma$ of cardinality $n-1$ and are given by the 
vanishing of the corresponding
homogeneous coordinates $z_\rho$, $\rho\in\tau$. Such a $\T$-orbit is a $\PP^1$ if it contains another fixed point $p_{\sigma'}$, i.e., if $\tau=\sigma\cap\sigma'$ for some $\sigma'\in\Sigma(n)$.

With this expanded interpretation of the notations, the proof of Theorem \ref{ClassThm} is valid for all such general semi-positive GIT presentations. 

The small $I$-function associated to the triple $(\CC^{n+r},\G,\theta)$ is given by the same formula
\begin{equation}\label{toric small I 2}
I_{sm}(q,z)=\one +\sum_{\beta\neq 0} q^\beta \prod_{\rho=1}^{n+r}\frac{\prod_{j=-\infty}^0 (c_1^{\T}(\underline{L}_\rho)+jz)}{\prod_{j=-\infty}^{d_\rho}(c_1^{\T}(\underline{L}_\rho)+jz)},
\end{equation}
as it can be easily seen that the computation from \S7.2 of \cite{CK} works for all GIT presentations. 
Recall that the class $\beta$ runs over $\mathrm{Eff}(\CC^{n+r},\G,\theta)$ and $d_\rho=\beta(\xi_\rho)\in \ZZ$.

Note that it is possible now that the unstable locus contains components of codimension $1$ (of the form $\{z_\rho=0\}$, for some $\rho\in\Sigma(1)$),
and then $r=\mathrm{rk}(\chi (\G)) > \mathrm{rk}(\Pic(X))$. Consequently,
$\mathrm{Eff}(\CC^{n+r},\G,\theta)$ may contain strictly the Mori cone of $X$ if the GIT presentation is non-standard. 
In this case, the $I$-function \eqref{toric small I 2} will depend on additional ``ghost" Novikov parameters, and in particular will differ from the ``usual" small $I$-function coming from the
{\it standard} GIT presentation.

Semi-positivity of the triple $(\CC^{n+r},\G,\theta)$ still implies semi-positivity of the anti-canonical class of $X$, but
the converse is not necessarily true. For example, $\PP^2$ has a well-known GIT presentation $\CC^4/\!\!/_\theta (\CC^*)^2$ 
(obtained by variation of GIT from the standard presentation of the Hirzebruch surface $\mathbb{F}_1$) for which $(\CC^4,(\CC^*)^2,\theta )$ is {\it not} semi-positive.
We conclude the discussion by noting the following elementary fact.
\begin{Lemma}\label{I_0=1} If the triple $(\CC^{n+r},\G,\theta)$ is semi-positive, then the corresponding small $I$-function \eqref{toric small I 2} has
$I_0=1$.
\end{Lemma}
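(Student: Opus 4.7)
The plan is to read off $I_0$ as the $z^0$-coefficient (equivalently, the $z\to\infty$ limit) of the explicit formula \eqref{toric small I 2}, and show that the contribution from each $\beta\neq 0$ is a strictly negative power of $z$. Since the $\beta=0$ summand is just $\one$, the result $I_0=1$ will follow.

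First I would analyze the $\rho$-th factor $F_\rho(z):=\prod_{j=-\infty}^0(H_\rho+jz)/\prod_{j=-\infty}^{d_\rho}(H_\rho+jz)$, where I write $H_\rho:=c_1^{\T}(\underline{L}_\rho)$ and $d_\rho:=\beta(\xi_\rho)$. After the obvious cancellation of the infinite tails, this is
\begin{itemize}
\item $\prod_{j=1}^{d_\rho}(H_\rho+jz)^{-1}$ if $d_\rho>0$, whose top power of $z$ (in the $1/z$-expansion) is $z^{-d_\rho}$ with leading coefficient $1/d_\rho!$;
\item $1$ if $d_\rho=0$;
\item $\prod_{j=d_\rho+1}^{0}(H_\rho+jz)$ if $d_\rho<0$, a polynomial of $z$-degree $-d_\rho-1$ (the factor $j=0$ contributes $H_\rho$, which has $z$-degree $0$).
\end{itemize}
Combining these for a fixed $\theta$-effective class $\beta$, the top power of $z$ in the product $\prod_\rho F_\rho(z)$ is
\[
-\sum_{\rho}d_\rho\ -\ \#\{\rho:d_\rho<0\}.
\]

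Next I would invoke semi-positivity. The identification $\det(T_{[\CC^{n+r}/\G]})=\bigotimes_\rho L_\rho$ gives $\beta(\det T_W)=\sum_\rho d_\rho$, so the semi-positivity of $(\CC^{n+r},\G,\theta)$ forces $\sum_\rho d_\rho\geq 0$ for every $\theta$-effective $\beta$. Hence the top $z$-power above is $\leq 0$, with equality iff $\sum_\rho d_\rho=0$ and $d_\rho\geq 0$ for all $\rho$, i.e.\ iff $d_\rho=0$ for every $\rho$.

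Finally, I need the elementary fact that $d_\rho=0$ for every $\rho\in[n+r]$ forces $\beta=0$. This holds because the $\xi_\rho$ generate $\chi(\G)_\QQ$ (the inclusion $\G\hookrightarrow \T=(\CC^\ast)^{n+r}$ dualises to a surjection $\ZZ^{n+r}\twoheadrightarrow \chi(\G)$, as $\G$ is a subtorus of full rank $r$), and a class $\beta\in\Hom(\chi(\G),\ZZ)$ that vanishes on all $\xi_\rho$ must vanish. Consequently every term with $\beta\neq 0$ has strictly negative top power of $z$, so it contributes to $I_k$ for some $k\geq 1$ but not to $I_0$. Thus $I_0(q)=1$.

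No serious obstacle is expected: the only thing that needs real care is the book-keeping of the $z$-degree (in particular remembering to subtract one for each negative $d_\rho$, coming from the vanishing factor $j=0$); once this is done correctly, semi-positivity produces the inequality with equality exactly at $\beta=0$.
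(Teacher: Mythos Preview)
Your proof is correct and follows essentially the same approach as the paper: both compute the top power of $z$ in the $q^\beta$-term as $-\sum_\rho d_\rho-\#\{\rho:d_\rho<0\}$, invoke semi-positivity to bound this above by $0$, and observe that equality forces all $d_\rho=0$, hence $\beta=0$. You give a bit more detail (the case-by-case analysis of each factor and the justification that the $\xi_\rho$ span $\chi(\G)_\QQ$), but the argument is the same.
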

\begin{proof} Let $\beta\neq 0$. By semi-positivity, $\sum_\rho d_\rho\geq 0$.  The power of $1/z$ appearing in the $q^\beta$-term is equal to
$$\sum_\rho d_\rho + \#\{\rho \; |\; d_\rho< 0\}\geq \sum_\rho d_\rho \geq 0.$$
If we would have only equalities in the above chain, then $d_\rho=0$ for all $\rho$, which is impossible since $\beta\neq 0$. Hence the power of $1/z$ is strictly positive.
\end{proof}

\subsubsection{Other targets} It is clear that Theorem \ref{ClassThm} will be true, with the same proof, for all GIT targets $X=\WmodG$ corresponding to 
a semi-positive triple $(W,\G,\theta)$ and satisfying the following properties:

\begin{enumerate} 

\item The small $I$-function of $X$ has $I_0=1$.

\item The $\T$-action on $X$ has 
only isolated fixed points and isolated $1$-dimensional $\T$-orbits. (The isolated fixed points assumption insures that Proposition  \ref{summary}$(ii)$ will again provide the needed matching \eqref{I=J} of small $J$-functions.)

\item The push-forward under $c_\ke\circ\iota_\ke$ of the $\T$-vertex contributions on the qusimap moduli spaces $\QmapXe$
(i.e., the right-hand side of equation \eqref{eqvertexmatching}) 
can be written as a polynomial $P^\ke (\hat{\psi}, D_J)$ in $\hat{\psi}_j$ and $D_J$
(with coefficients in $H^*_{\T,\mathrm{loc}}(\Spec \CC)$) which is {\it independent on $g$ and $k$}.
\end{enumerate}

For the ``local Grassmannians" (i.e., the total space of the canonical bundle over
$Grass (r,n)$) the first two properties are immediate and the third is essentially shown in \cite{MOP}, \cite{Toda}. 
One can also easily check the third property when considering more general type A flag manifolds in place of Grassmannians. 
In particular, we have a proof of Theorem \ref{local Grass} as well. Note that explicit closed formulas for the small $I$-functions of these targets 
are easily obtained from the results in \cite{BCK1}, \cite{BCK2}, \cite{CKS}. For example, the $\T$-equivariant small $I$ for the local $Grass (r,n)$ is
\begin{equation}\label{local grass I}
\begin{split}
&I_{sm}=\one+\sum_{d>0}q^d   \left( \prod_{k=0}^{nd-1}(-n(\sum_{i=1}^rH_i)+\lambda_0 - kz)\times \right .\\
&\left . \sum_{d_1+\dots +d_r=d}\frac{(-1)^{(r-1)d}\prod_{1\leq i<j\leq r}(H_i-H_j+(d_i-d_j)z)}
{\prod_{1\leq i<j\leq r}(H_i-H_j)\prod_{i=1}^r\prod_{l=1}^{d_i}\prod_{j=1}^n
(H_i+\lambda_j+lz)}\right),
\end{split}
\end{equation}
where $H_1,\dots,H_r$ are the Chern roots of $S^\vee$, the dual of the
tautological subbundle (of rank $r$) $0\lra S\lra \cO^{\oplus n}$, and $\lambda_0,\lambda_1,\dots ,\lambda_n$ are the equivariant parameters for the torus $\T\cong\CC^*\times(\CC^*)^{n}$.
Here the first factor is the torus acting by scaling on the fibers of the canonical bundle, while the factor $(\CC^*)^n$ is the standard torus acting on the Grassmannian.

\end{document}